\numberwithin{equation}{section}
\newtheorem{theorem}{Theorem}[section]
\newtheorem{theo}{Theorem}[section]
\newtheorem{proposition}[theorem]{Proposition}
\newtheorem{prop}[theorem]{Proposition}
\def\eps{\varepsilon }
\newcommand{\RR}{\mathbb{R}}
\newcommand{\cO}{\mathcal{O}}
\newcommand{\ZZ}{{\mathbb Z}}
\def\beq{\begin{equation}}
\def\eeq{\end{equation}}
\def\bb1{{1\!\!1}}
\def\cB{\mathcal{B}}
\def\rit{{\Bbb R}}
\def\zit{{\Bbb Z}}
\def\eps{\varepsilon}
\def\cA{{\cal A}}
\def\cB{{\cal B}}
\begin{document}

\title{$L^\infty$ instability of Prandtl layers}

\author{Emmanuel Grenier\footnotemark[1]
  \and Toan T. Nguyen\footnotemark[2]
}

\maketitle

\renewcommand{\thefootnote}{\fnsymbol{footnote}}

\footnotetext[1]{Equipe Projet Inria NUMED,
 INRIA Rh\^one Alpes, Unit\'e de Math\'ematiques Pures et Appliqu\'ees., 
 UMR 5669, CNRS et \'Ecole Normale Sup\'erieure de Lyon,
               46, all\'ee d'Italie, 69364 Lyon Cedex 07, France. Email: Emmanuel.Grenier@ens-lyon.fr}

\footnotetext[2]{Department of Mathematics, Penn State University, State College, PA 16803. Email: nguyen@math.psu.edu. TN's research was partly supported by the NSF under grant DMS-1764119 and by an AMS Centennial Fellowship.}


\subsubsection*{Abstract}


In $1904$, Prandtl introduced his famous boundary layer in order to describe the behavior of solutions of incompressible Navier Stokes
equations near a boundary as the viscosity goes to $0$. His Ansatz was that the solution of Navier Stokes equations
can be described as a solution of Euler equations, plus a boundary layer corrector, plus
a vanishing error term in $L^\infty$ in the inviscid limit. In this paper we prove that, 
for a class of  smooth solutions of Navier Stokes equations, namely for shear layer profiles which are unstable for
Rayleigh equations,  this Ansatz is false if we consider solutions with Sobolev regularity, in strong contrast with the analytic case,
pioneered by R.E. Caflisch and  M. Sammartino \cite{SammartinoCaflisch1,SammartinoCaflisch2}.

Meanwhile we address the classical problem of the nonlinear stability of shear layers near a boundary and
prove that if a shear flow is spectrally unstable for Euler equations, then it is non linearly unstable for the Navier Stokes
equations provided the viscosity is small enough.


\section{Introduction}


In this paper we address the question of the description of solutions of incompressible Navier Stokes equations in a bounded domain,
in the case of the zero Dirichlet boundary condition. More precisely, let $\Omega$ be the half plane $x \in \rit$, $y > 0$.
Let $u^\nu$ be solutions of incompressible Navier Stokes equations with forcing term $f^\nu$
\beq \label{NS1}
\partial_t u^\nu + (u^\nu \cdot \nabla) u^\nu - \nu \Delta u^\nu + \nabla p^\nu = f^\nu,
\eeq
\beq \label{NS2}
\nabla \cdot u^\nu = 0
\eeq
and Dirichlet boundary condition
\beq \label{NS3} 
u^\nu = 0 \qquad \hbox{on} \qquad y = 0 .
\eeq
As the viscosity goes to $0$, we expect $u^\nu$ to converge to a solution of Euler equations
\beq \label{Eu1}
\partial_t u^E + (u^E  \cdot \nabla ) u^E  + \nabla p^E  = f^0,
\eeq
\beq \label{Eu2}
\nabla\cdot  u^E  = 0 
\eeq
with boundary condition
\beq \label{Eu3}
u_2^E  = 0 \qquad \hbox{on} \qquad y = 0 .
\eeq
The justification of this convergence is however very delicate, since the boundary conditions dramatically change. 
As a consequence, a boundary
layer is expected near $y = 0$ in order to describe the transition between Navier Stokes boundary conditions and Euler boundary conditions.
To take into account this transition, Prandtl \cite{Pra:1904} introduced the following Ansatz
\beq \label{Ansatz}
u^\nu(t,x,y) = u^E(t,x,y) + u^P(t,x,y / \sqrt{\nu})  + o(1)_{L^\infty},
\eeq
where $u^P$ describes the behavior of $u^\nu$ in a boundary layer of size $\cO(\sqrt \nu)$, 
called the Prandtl's boundary layer, and the remainder $o(1)_{L^\infty}$ tends to zero in the inviscid limit. The boundary layer corrector $u_1 = u_1^E(t,x,0)+ u_1^P(t,x,Y)$ is then constructed by solving the classical Prandtl boundary layer equation
\beq\label{Prandtl}
\begin{aligned}
	\partial_t u_1 + u_1 \partial_x u_1 + u_2 \partial_Y u_1 &= \partial_Y^2 u_1 - \partial_x p^E(t,x,0) 
	\\ \partial_xu_1 + \partial_Y u_2 & =0
\end{aligned}   
\eeq 
together with the no-slip boundary conditions $u_1 = u_2 = 0$ at $Y=0$ and the matching condition $u_1(t,x,Y) \to u^E_1(t,x,0)$ as $Y \to \infty$. 


The existence and uniqueness of solutions to the Prandtl equations 
have been constructed for monotonic data by Oleinik  \cite{Ole} in the sixties. There are also recent reconstructions \cite{Alex, MW} of Oleinik's solutions via a more direct energy method. For data with analytic or Gevrey regularity, the well-posedness of the Prandtl equations is established in \cite{SammartinoCaflisch1, GVMasmoudi15}, among others. In the case of non-monotonic data with Sobolev regularity, the Prandtl boundary layer equations are known to be ill-posed (\cite{GVDormy,GVN1,GN1}). 

Concerning the validity of Prandtl's Ansatz \eqref{Ansatz}, this was established for data with analytic regularity in the celebrated work of Caflisch and Sammartino \cite{SammartinoCaflisch2}. In particular, it was proven that if a boundary layer Ansatz exists to describe the limiting
behavior of $u^\nu$, then it must be of the Prandtl's form \eqref{Ansatz}. 
A similar result were also obtained by \cite{Mae} for data whose initial vorticity is compactly supported away from the boundary. The stability of shear flows under perturbations with Gevrey regularity is recently proved in \cite{GVM}.

However, considering analytic or Gevrey initial data is too restrictive, since it precludes small
but high frequencies perturbations, which are more physically relevant. The first author proved in \cite{Grenier00CPAM} that the Ansatz \eqref{Ansatz} is nonlinearly unstable with a vanishing lower bound of order $\mathcal O(\nu^{1/4})$ for the remainder. Up to now, there were no result which proved, or disproved, 
the Ansatz
(\ref{Ansatz}) for data with Sobolev regularity. 


In this paper we give the first result in this direction. Namely we prove that there exists particular initial data such that (\ref{Ansatz})
is {\it wrong}. More precisely we will show that some shear layer profiles are nonlinearly unstable,  for these profiles
the remainder in \eqref{Ansatz} 
reaches order one in the inviscid limit. Proving the instability of order one, or in fact any order beyond $\mathcal O(\nu^{1/4})$ obtained in \cite{Grenier00CPAM}, faces a serious obstruction: the viscous boundary sublayers which arise from the instability of the main Prandtl's layer are themselves unstable, giving rise to thinner and thinner viscous sublayers. The instability of these thinner sublayers is inevitable due to the linear instability theory of generic shear flows \cite{Reid,GGN3} and the fact that the local Reynolds is of order $\frac{U_{sub}}{\nu^{1/4}} \to \infty$ whenever the amplitude of sublayers $U_{sub}$ goes beyond $\nu^{1/4}$. As a consequence, there are many instabilities from both the main Prandtl's layers and the sublayers, and it remains unclear which sublayers are dominant in the large time. This is the main limitation of the previous method \cite{Grenier00CPAM}. For more details of the obstruction, see Section \ref{sec-strategy}. See also \cite{GrN3} for a further link between the stability of classical Prandtl's layers and that of viscous sublayers. 

This paper not only proves the invalidity of the Ansatz \eqref{Ansatz}, but also constructs a three-layer solution to Navier-Stokes equations involving an Euler flow (trivial), a classical Prandtl's layer with thickness of order $\sqrt{\nu}$, and a thinner boundary sublayer with thickness of order $\nu^{3/4}$. This latter sublayer in turn gives rise to thinner sub-sublayers with thickness of order $\nu^{7/8}$, which is confirmed linearly \cite{GGN3}. This paper builds the first step towards fully justifying the boundary layer cascade developed near the boundary. 

Let us mention that if one replaces the classical no-slip boundary condition \eqref{NS3} by a Navier-slip condition, the boundary layers are less violent with a much smaller amplitude of order $\sqrt{\nu}$. As a consequence, the inviscid limit and the boundary layer Ansatz are established in this case: see for instance \cite{IftimieSueur,MasRou1}. The instability observed in this paper does not apply to these settings. However, when the slip length is of order $\sqrt{\nu}$ or smaller, a similar instability up to order one can be obtained via an energy method \cite{Paddick}, adapted from \cite{Grenier00CPAM}. A crucial difference between the slip and no-slip boundary conditions is that there are intricate (and unstable) boundary sublayers arising in the latter case, but not in the former.

\subsection{Main results}

Let us now detail our main results of this paper. A shear layer profile is a solution of the form
$$
U^\nu(t,x,y) = \begin{pmatrix}U(t,y/ \sqrt{\nu}) \\ 0 \end{pmatrix} 
$$
that is a solution of both Prandtl and Navier Stokes equations. Here $U(t,y)$ is a smooth function with $U(t,0) = 0$ such that $U(t,y)$ converges when $y \to + \infty$
to a constant Euler flow $U_\infty$. In this paper we consider two cases

\begin{itemize} 

\item time dependent boundary layers: 
$$
\partial_t U^\nu - \partial_{yy} U^\nu = 0,
$$
namely $U^\nu$ is a solution of the classical heat equation

\item time independent boundary layers: in this case, we add  a time-independent forcing term which compensates for
the viscosity. Precisely, we take 
\begin{equation}\label{def-source}
F^\nu = \begin{pmatrix} - U''(0,y / \sqrt{\nu}) \\0\end{pmatrix} .
\end{equation}

\end{itemize}

The main result of this paper is as follows. 
\begin{theo} \label{maintheo}
There exists a smooth, analytic function $U(0,Y)$, such that the corresponding  sequence of  time dependent shear layers 
\beq \label{shear}
U^\nu(t,x,y) = \begin{pmatrix} U(t,y / \sqrt{\nu}) \\0 \end{pmatrix},
\eeq
which are smooth solutions
of Navier Stokes, Prandtl, and heat equations satisfies the following assertion. For any $N$ and $s$ arbitrarily 
large, there exist $\sigma_0 > 0$, $C_0 > 0$ 
and a sequence of solutions $u^\nu$ of Navier Stokes equations \eqref{NS1}-\eqref{NS3} with forcing terms $f^\nu$,
 on some interval $[0,T^\nu]$, such that
$$
\| u^\nu(0) - U^\nu(0) \|_{H^s} \le \nu^N,
$$
$$
\| f^\nu \|_{L^\infty([0,T^\nu],H^s)} \le \nu^N,
$$
but
$$
\| u^\nu(T^\nu) - U^\nu(T^\nu) \|_{L^\infty} \ge \sigma_0
$$
and
$$
T^\nu = O ( \sqrt{\nu} \log \nu^{-1} ).
$$
\end{theo}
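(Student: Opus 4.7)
The plan is to build a high-order WKB-type approximate solution $u^\nu_{\mathrm{app}}$ to \eqref{NS1}--\eqref{NS3} that grows exponentially out of a tiny initial perturbation of $U^\nu$, and then to prove that the genuine solution $u^\nu$ stays close to $u^\nu_{\mathrm{app}}$ long enough for $u^\nu_{\mathrm{app}}-U^\nu$ to reach size $\sigma_0$. The seed of the instability is a spectral one: I choose the profile $U(t,Y)$ so that the Rayleigh equation associated with the frozen shear $U$ has an unstable mode with $\Re\lambda_0>0$, and thus, by a classical perturbation argument (e.g.\ as in \cite{Grenier00CPAM,GrN3}), the Orr--Sommerfeld operator at viscosity $\nu$ inherits a genuine unstable eigenmode $(\psi^\nu,\lambda^\nu)$ with $\Re\lambda^\nu = \Re\lambda_0/\sqrt{\nu} + O(1)$ and an eigenfunction living at the Prandtl scale $\sqrt{\nu}$ together with a viscous sublayer correction at scale $\nu^{3/4}$. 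This dispersion-relation analysis is what fixes $T^\nu \sim \sqrt{\nu}\log\nu^{-1}$: at this time the linearized mode $e^{\lambda^\nu t}$ has amplified by a factor $\nu^{-N'}$ for any prescribed $N'$.

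\textbf{Construction of the approximate solution.} Starting from the leading mode $u_1 = \delta\, \psi^\nu e^{\lambda^\nu t}$ with $\delta=\nu^N$, I would define inductively
\[
u^\nu_{\mathrm{app}}(t,x,y) \;=\; U^\nu(t,y) + \sum_{j=1}^{M} \delta^j\, u_j(t,x,y),
\]
where each $u_j$ solves a linearized Navier--Stokes equation around $U^\nu$ with source given by the quadratic interactions $\sum_{k+\ell=j}(u_k\cdot\nabla)u_\ell$ of the lower order terms. Each $u_j$ grows essentially like $e^{j\lambda^\nu t}$ and must be decomposed into three pieces: an inviscid/Euler part away from the boundary, a Prandtl correction at scale $\sqrt{\nu}$, and a viscous sublayer correction at scale $\nu^{3/4}$ needed to enforce the Dirichlet condition and absorb the boundary trace of the previous pieces. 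The construction of $u_j$ reduces at each step to solving an Orr--Sommerfeld resolvent problem with a forcing term, which is feasible thanks to the detailed resolvent bounds developed for shear flows; one chooses $M$ large enough (depending on $N$ and $s$) so that the residual
\[
R_{\mathrm{app}} \;:=\; \partial_t u^\nu_{\mathrm{app}} + (u^\nu_{\mathrm{app}}\cdot\nabla)u^\nu_{\mathrm{app}} - \nu\Delta u^\nu_{\mathrm{app}} + \nabla p^\nu_{\mathrm{app}} - F^\nu
\]
is of size $\delta^{M+1} e^{(M+1)\lambda^\nu t}$, which is still $\nu^N$-small on $[0,T^\nu]$ and can be absorbed into $f^\nu$.

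\textbf{Nonlinear stability of the approximate solution.} Setting $u^\nu = u^\nu_{\mathrm{app}} + v^\nu$, the perturbation $v^\nu$ satisfies a Navier--Stokes equation linearized around $u^\nu_{\mathrm{app}}$ with quadratic self-interaction and the tiny source $-R_{\mathrm{app}}$. The main step is an energy/semigroup estimate of the form
\[
\|v^\nu(t)\|_{L^2} \;\le\; C e^{\Lambda t/\sqrt{\nu}}\Bigl( \|v^\nu(0)\|_{L^2} + \int_0^t \|R_{\mathrm{app}}(s)\|_{L^2}\,ds \Bigr) + \text{(nonlinear terms)},
\]
with $\Lambda$ controlled in terms of $\Re\lambda_0$. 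A continuity/bootstrap argument then shows that as long as $\|v^\nu\|$ stays much smaller than $\|u^\nu_{\mathrm{app}}-U^\nu\|$, the nonlinear self-interaction is a higher-order perturbation and the approximate solution dictates the dynamics. Choosing $T^\nu$ so that $\delta\, e^{\Re\lambda^\nu T^\nu} \simeq 1$ yields the desired $L^\infty$ lower bound $\sigma_0$, which is essentially $\|\psi^\nu\|_{L^\infty}$ of the leading mode.

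\textbf{Main obstacle.} The hard part is the nonlinear stability step in the presence of the viscous sublayers. As emphasized in the introduction, the sublayers of thickness $\nu^{3/4}$ are themselves linearly unstable (the local Reynolds number diverges as soon as the sublayer amplitude exceeds $\nu^{1/4}$), so one cannot close the estimates by naive $L^2$ energy methods as in \cite{Grenier00CPAM} --- that route gives at best an $O(\nu^{1/4})$ instability. The resolution is to prove sharp bilinear/semigroup estimates for the full linearization around the three-scale approximate flow, capturing the exact rate $\Re\lambda_0/\sqrt{\nu}$ rather than the worst-case rate of the sublayer instabilities, and to balance the size of the nonlinear terms against the number of expansion orders $M$. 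This is precisely where the cascade of boundary sublayers, and a careful book-keeping of how each layer contributes to the linearized generator, becomes essential.
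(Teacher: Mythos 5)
Your construction of the approximate solution follows the paper's starting point (unstable Rayleigh mode, perturbed into an Orr--Sommerfeld eigenmode, iterated resolvent problems for the higher-order terms $u_j$), but the final step --- writing $u^\nu = u^\nu_{\mathrm{app}} + v^\nu$ and closing a semigroup/energy plus bootstrap estimate for the remainder $v^\nu$ --- is exactly the step that the paper identifies as impossible to carry out beyond amplitude $O(\nu^{1/4})$, and it is the genuine gap in your proposal. The linearization around $u^\nu_{\mathrm{app}}$ (as opposed to around the shear $U^\nu$) contains the term $(v^\nu\cdot\nabla)u^\nu_{\mathrm{app}}$, and $\|\nabla u^\nu_{\mathrm{app}}\|_{L^\infty}$ blows up like (amplitude)$/\nu^{3/4}$ because of the viscous sublayer; once the amplitude of the perturbation exceeds $\nu^{1/4}$ the sublayer is itself spectrally unstable, with growth rates that are \emph{not} controlled by $\Re\lambda_0$, so no estimate of the form $\|v^\nu(t)\|\le Ce^{\Lambda t/\sqrt{\nu}}(\dots)$ with $\Lambda$ tied to the main mode is available. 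Your ``resolution'' (sharp bilinear/semigroup estimates for the full linearization around the three-scale flow) is stated as a wish, not proved, and it is precisely what is not known; the paper also explains why the natural fallback --- analytic data to kill the sublayer instabilities \`a la Caflisch--Sammartino --- fails here, since the analyticity radius decays like $1/t$ while one must control times of order $\log\nu^{-1}$.

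The paper's actual proof avoids the remainder estimate altogether: it takes $M=\infty$, i.e.\ it defines the exact solution as the infinite series $u^\nu = U + \sum_{j\ge 1}\nu^{Nj}u^j$, where every $u^j$ is driven by eigenmodes through Orr--Sommerfeld resolvent problems (so the growth is purely exponential, with no polynomial losses from $\partial_t+U\partial_x$), and then proves convergence of this series. The convergence is obtained by introducing ``generator functions'' --- weighted analytic norms summing all $x$-frequencies and all $y$-derivatives of the $u^j$, with boundary-layer weights of thickness $\delta\sim\nu^{1/4}$ --- showing via the Green-function bounds for Orr--Sommerfeld (Theorem \ref{theo-GreenOS}) that the partial sums satisfy a Hopf-type differential inequality $\partial_\tau G_N\le CG_N(\partial_{z_1}+\partial_{z_2})G_N$, and closing by a characteristics argument on a short analyticity strip. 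That machinery (generator functions, the boundary-layer norms $\|\cdot\|_{\ell,\delta}$, the divergence-free structure used in Proposition \ref{prop-Gendy}, the pseudoinverse of $Orr_{\alpha,c}$, and the Hopf inequality) is the core of the proof and is entirely absent from your proposal; without it, or a concrete substitute for the nonlinear closure, the argument does not reach an $O(1)$ instability.
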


This theorem proves that Prandtl Ansatz is false in $L^\infty$ in very small times, of order $\sqrt\nu \log \nu^{-1}$. The same theorem holds true for the time independent boundary layer with a forcing term (\ref{def-source}). In particular, it is proved that the convergence of Navier-Stokes solutions to Euler solutions, plus a boundary layer, fails in $L^\infty$ in the inviscid limit. We remark that this however does not prevent the convergence to hold in $L^p$ for $p<\infty$. 

In addition, as will be clear from the construction, the Navier-Stokes solutions obtained in Theorem \ref{maintheo} involve not only the Prandt's layer of size $\sqrt\nu$, but also a viscous sublayer of size $\nu^{3/4}$. Moreover, it is important to note that the instability occurs in the vanishing time $T^\nu$ of order $\sqrt\nu \log\nu^{-1}$.  

In the proof of Theorem \ref{maintheo}, we introduce the following hyperbolic rescaling
$$(T,X,Z) = \frac{1}{\sqrt{\nu}} (t,x,z).$$
Theorem \ref{maintheo} is thus a direct consequence of the scaling and the following theorem, which also addresses the classical stability problem of shear layers in the inviscid limit.

\begin{theo} \label{theoinstable}
Let $U(y)$ be a smooth and analytic function which converges exponentially fast at infinity to a constant, 
with $U(0) = 0$ and assume that it is spectrally unstable for linearized
Euler equations, with a simple eigenvalue. 
Then it is nonlinearly unstable for Navier Stokes equations in $L^\infty$, provided $\nu$ is small enough,
in the following sense.
 For any $s$ arbitrarily large, there exist $\sigma_0 > 0$, $C_0 > 0$ 
and a sequence of solutions $u^\delta$ of Navier Stokes equations \eqref{NS1}-\eqref{NS3} with forcing terms $f^\delta$,
 on some interval $[0,T^\delta]$, such that, as $\delta \to 0$,
$$
\| u^\delta(0) - U^\nu(0) \|_{H^s} \le \delta,
$$
$$
\| f^\delta \|_{L^\infty([0,T^\nu],H^s)} \le \delta,
$$
but
$$
\| u^\delta(T^\delta) - U^\nu(T^\delta) \|_{L^p} \ge \sigma_0, \qquad \forall p\in [1,\infty]
$$
and
$$
T^\delta = O ( \log \delta^{-1} ) ,
$$
where $U^\nu(t,y)$ is the solution of heat equation with diffusivity $\nu$ and initial data $U(y)$.
\end{theo}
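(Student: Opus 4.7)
Apply the hyperbolic rescaling $(T,X,Z)=\nu^{-1/2}(t,x,y)$ announced in the text. The problem reduces to showing that the steady shear $(U(Z),0)$ is nonlinearly unstable in $L^\infty$ for Navier--Stokes on the half-plane with viscosity $\sqrt\nu$, with perturbations of size $\delta$ in $H^s$ reaching order one on a time $T^\delta=\cO(\log\delta^{-1})$. I follow the nonlinear-from-linear scheme of \cite{Grenier00CPAM}: construct a linear growing mode for the linearization, build an approximate solution as a finite power series in $\delta$, and close a Duhamel bootstrap on the remainder up to the Lyapunov time.

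\textbf{Linear mode and approximate solution.} The first step is to produce an eigenpair $(\lambda^{\sqrt\nu},V^{\sqrt\nu})$ of the linearized Navier--Stokes operator $\cL_{\sqrt\nu}$ around $(U(Z),0)$, with $\Re\lambda^{\sqrt\nu}$ close to $\Re\lambda_0>0$, the given simple unstable Rayleigh eigenvalue. This is done by a matched-asymptotic ansatz combining an outer Rayleigh profile, an Orr--Sommerfeld boundary sublayer near $Z=0$, and a critical-layer correction near $U(Z_c)=\lambda_0/ik$; simplicity of $\lambda_0$ lets one close the perturbation by Lyapunov--Schmidt, as carried out in \cite{GGN3}. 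I then iteratively define correctors $\phi_k$, $1\le k\le N$, by $\phi_1=V^{\sqrt\nu}e^{\lambda^{\sqrt\nu}T}+\text{c.c.}$ and
\[
(\partial_T-\cL_{\sqrt\nu})\phi_k + \nabla q_k = -\sum_{\substack{i+j=k\\ i,j\ge 1}}(\phi_i\cdot\nabla)\phi_j,\qquad \nabla\cdot\phi_k=0,\qquad \phi_k|_{Z=0}=0,
\]
obtaining weighted-Sobolev bounds $\|\phi_k(T)\|\lesssim e^{k\Re\lambda^{\sqrt\nu} T}$. Setting $\phi_{\rm app}=\sum_{k=1}^N\delta^k\phi_k$, the Navier--Stokes residual is of size $\delta^{N+1}e^{(N+1)\Re\lambda_0 T}$, which remains much smaller than the leading mode $\delta\phi_1$ throughout $[0,T^\delta]$ for $T^\delta=(\Re\lambda^{\sqrt\nu})^{-1}\log(\sigma_0/\delta)$.

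\textbf{Bootstrap and main obstacle.} Write the true solution as $u=U^\nu+\phi_{\rm app}+w$; then $w$ solves a forced perturbed Navier--Stokes system. In a norm $\cX$ embedding into $L^\infty$, controlling the bilinear term by $\|w\|_\cX^2$, and for which $e^{\cL_{\sqrt\nu}T}$ admits a semigroup bound $\|e^{\cL_{\sqrt\nu}T}\|\lesssim e^{\gamma T}$ with $\gamma$ only slightly above $\Re\lambda_0$, a Duhamel estimate closes the bootstrap $\|w(T)\|_\cX\le \delta^{N+1/2}e^{(N+1)\Re\lambda_0 T}$ on $[0,T^\delta]$. At $T=T^\delta$ the leading piece $\delta\phi_1$ has $L^\infty$ size $\sigma_0$ while $w$ is still negligible, which yields the asserted $L^p$ lower bound for every $p\in[1,\infty]$. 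The genuine difficulty is the semigroup bound: $\cL_{\sqrt\nu}$ is strongly non-self-adjoint, has a critical-layer singularity, and has a thin viscous boundary sublayer where tangential derivatives lose powers of $\nu$; moreover, by the Orr--Sommerfeld analysis of \cite{GGN3}, further unstable eigenvalues may crowd near the imaginary axis as $\nu\to 0$. Producing resolvent estimates sharp enough to give a spectral gap above $\Re\lambda_0$ on the stable complement of the unstable eigenspace, and designing $\cX$ so that (i) this gap survives, (ii) the quadratic nonlinearity is controllable, and (iii) $L^\infty$ is recovered at the end, is the heart of the argument and the main technical work.
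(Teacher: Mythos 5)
Your plan is essentially the scheme of \cite{Grenier00CPAM} (unstable Orr--Sommerfeld mode, finite expansion $\sum_{k\le N}\delta^k\phi_k$, Duhamel bootstrap on the remainder $w$), and this is precisely the strategy that the paper explains cannot reach an $O(1)$ instability. The step you defer as ``the heart of the argument'' --- a semigroup bound $\|e^{\mathcal{L}_{\sqrt\nu}T}\|\lesssim e^{\gamma T}$ with $\gamma$ only slightly above $\Re\lambda_0$, in a norm $\mathcal{X}$ that embeds in $L^\infty$ and controls the quadratic term --- is exactly the missing object, and it is not merely ``technical'': the approximate solution and the remainder develop a viscous sublayer of thickness $\nu^{1/4}$ which is itself spectrally unstable in the inviscid limit (\cite{Reid,GGN3}), so generic solutions of the linearized problem are not governed by the rate $\Re\lambda_0$ in such strong norms; an energy-type closure costs $\|\nabla u^{\mathrm{app}}\|_{L^\infty}\sim\nu^{-1/4}$ and caps the instability at $O(\nu^{1/4})$, and working in analytic norms to kill the sublayer instability fails as well because the analyticity radius shrinks like $1/t$ over the needed time interval of length $\log\delta^{-1}$. (As a smaller point, your claimed bootstrap bound $\|w\|_{\mathcal X}\le\delta^{N+1/2}e^{(N+1)\Re\lambda_0T}$ is of size $\sigma_0^{N+1}\delta^{-1/2}$ at $T^\delta$, hence not small; the usual choice is $\delta^{N+1}$, but this does not affect the main objection.)

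The paper avoids the remainder/bootstrap altogether: it constructs the \emph{exact} solution as the infinite series $u=U+\sum_{j\ge1}\nu^{Nj}u^j$, where each $u^j$ is a finite sum of terms of the form $e^{ij'\alpha_0(x-\Re c_0 t)}e^{j\alpha_0\Im c_0 t}\psi^j_\alpha(y)$ obtained by solving Orr--Sommerfeld resolvent problems \eqref{OS-un} (with the pseudoinverse of Theorem \ref{theopseudo} at the non-invertible modes), so every term grows purely exponentially and no semigroup estimate on the stable complement is ever needed. Convergence of the series is then proved in analytic-type norms with boundary-layer weights (the generator functions of Section 3), using the Green-function bounds for Orr--Sommerfeld from \cite{GrN1} and the incompressibility structure (Propositions \ref{prop-Gendy}, \ref{prop-elliptic}, \ref{prop-OS3}), which yield the recursive bound \eqref{boundGn} and hence a Hopf-type differential inequality for the generating series, closed by a characteristics argument on a time interval of length $O(\log\nu^{-1})$. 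Without this mechanism (or a genuine substitute for the sharp semigroup bound you postulate), your argument does not close.
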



Let us now discuss this result. Up to the best of our knowledge it is the first rigorous result of instability of a shear layer
profile near a boundary for Navier Stokes equations. According to Rayleigh's criterium 
the profile $U^\nu$ will have an inflection point.
Physically, this may correspond to a reverse flow and thus rules out the exponential profile $U_\infty (1 - e^{-y / C})$.
The Prandtl equation is well posed for $U^\nu$ and for neighboring analytic profiles. However we do not know whether
the Prandtl equation is well posed for nearby profiles with only Sobolev regularity.


\subsection*{Notations}


For $\alpha \in \rit$ we define
$$
\Delta_\alpha = \partial_y^2 - \alpha^2 
$$
and
$$
\nabla_\alpha = (i \alpha, \partial_y) .
$$
In particular $\nabla_\alpha^2 = (- \alpha^2, \partial_y^2)$.
The three dimensional case is exactly similar to the two dimensional one, therefore we restrict ourselves to the two dimensional case.


\section{General strategy}\label{sec-strategy}


The proof of Theorem \ref{maintheo} relies on the {\em complete} construction of the instability $u^\nu$.
The first step is to make an isotropic change of variables in $t$, $x$ and $y$; namely, we define
$$
T = {t \over \sqrt{\nu}}, \quad X = {x \over \sqrt{\nu}}, \quad Y = {y \over \sqrt{\nu}} .
$$
Of course, the Navier Stokes equations remain unchanged, except the viscosity which is now $\sqrt\nu$. From now on, we abuse the notation by denoting by $t$, $x$ and $y$ the new variables $T$, $X$ and $Y$.

The starting point is the choice of the shear layer profile (\ref{shear}). We will choose a shear profile $U_0 = (U(y),0)$ 
which is unstable with respect to linearized Euler equations. More precisely, we start from $U_0$, such that
there exists an exponentially growing solution to the following linearized Euler equations
\beq \label{linE1}
\partial_t v + (U_0 \cdot \nabla) v + (v \cdot \nabla) U_0 + \nabla p = 0,
\eeq
\beq \label{linE2}
\nabla \cdot v = 0,
\eeq
\beq \label{linE3} 
v_2 = 0 \qquad \hbox{on} \qquad y = 0 .
\eeq
The study of the linear stability of a shear layer profile is a classical issue in fluid mechanics. The classical strategy to address
this question is to introduce the stream function of $v$ and to take its Fourier transform in the tangential variable $x$ (with dual
Fourier variable $\alpha$) and the Laplace transform in time (with dual variable $\lambda = -i\alpha c$). Precisely, we look for $v$ of the form
\beq \label{stream1}
v = \nabla^\perp \Bigl( e^{i \alpha (x - c t) } \psi(y) \Bigr) + \mbox{complex conjugate.}
\eeq
Putting (\ref{stream1}) in (\ref{linE1}), we get the classical Rayleigh equation for the stream function $\psi$
\beq \label{Ray1}
(U - c) (\partial_y^2 - \alpha^2) \psi = U'' \psi,
\eeq
\beq \label{Ray2}
\alpha  \psi(0) = \lim_{y\to + \infty}  \psi(y) = 0 .
 \eeq
 The study of the linear stability of $U$ reduces to a spectral problem: find $c$ and $\psi$, solutions of Rayleigh equations, with 
 $\Im(\alpha c)  > 0$. Following the classical Rayleigh criterium, if such an instability exists, then $U$ must have an inflection point.
Such smooth unstable profiles do exist (see, for instance, \cite{Grenier00CPAM}).
We choose the most unstable mode, namely the largest $|\alpha \Im c|$.
Starting with such an instability, we can construct an instability for the following linearized Navier Stokes equations
\beq \label{linNS1}
\partial_t v + (U_0 \cdot \nabla) v + (v \cdot \nabla) U_0 - \sqrt \nuÊ\Delta v + \nabla p = 0,
\eeq
\beq \label{linNS2}
\nabla \cdot v = 0,
\eeq
\beq \label{linNS3} 
v = 0 \qquad \hbox{on} \qquad y = 0 .
\eeq
The analogs of the Rayleigh equations \eqref{Ray1}-\eqref{Ray2} are the Orr Sommerfeld equations which read
\beq \label{inOS1}
-\eps (\partial_y^2 - \alpha^2)^2 \psi + (U - c) (\partial_y^2 - \alpha^2) \psi = U'' \psi,
\eeq
\beq \label{inOS2}
\alpha  \psi(0) = \psi'(0) = \lim_{y\to + \infty}  \psi(y) = 0,
 \eeq
 where
$$
\eps = {\sqrt\nu \over i \alpha} .
$$
Such a spectral formulation of the linearized Navier-Stokes equations near a boundary layer shear profile has been 
intensively studied  in the physical literature.  We in particular refer to
\cite{Reid,Sch} for the major works of Heisenberg, Tollmien, C.C. Lin, and Schlichting on the subject. 
We also refer to \cite{GGN1,GGN3,GrN1} for the rigorous spectral analysis of the Orr-Sommerfeld equations.  

Now starting from an unstable mode $(\psi^0,c^0)$ of the Rayleigh equation for some positive $\alpha$, it is possible to construct an unstable mode
$(\psi^\nu,c^\nu)$ for Navier Stokes equations, provided $\nu$ is small enough, such that
\beq \label{mode1}
\psi^\nu - \psi^0 = O(\nu^{1/4}),
\eeq
\beq \label{mode2}
c^\nu - c^0 = O(\nu^{1/4}).
\eeq
Let
$$
\lambda_0 = \alpha c^\nu .
$$
This has been proved rigorously in \cite{GrN1} through a complete analysis of the Green function of Orr Sommerfeld equation.
More precisely, the proof of (\ref{mode1})-(\ref{mode2})
relies on the complete description of all four independent solutions of the fourth order differential
equation (\ref{inOS1}). It can be proven that two of them go to $+ \infty$ as
$y \to + \infty$. These two solutions can be forgotten in the construction of an unstable mode. The other two
converge to $0$ as $y \to + \infty$. One, called $\psi_f$, has a "fast" behavior, namely behaves
like $\exp( - C y / \sqrt{\eps})$ for large $y$. The other one, called $\psi_s$, has a "slow" behavior and behaves
like $\exp(- C | \alpha | y)$. The second one, $\psi_s$, comes from the Rayleigh mode, 
and is a small perturbation of $\psi^0$. Then the unstable mode $\psi^\nu$ is a combination of $\psi_f$ and $\psi_s$
and is of the form
\beq \label{structurepsi}
\psi^\nu = \alpha^\nu \psi_f + \beta^\nu \psi_s.
\eeq
The two relations $\psi^\nu(0) = \partial_y \psi^\nu(0) = 0$ give the dispersion relation. Using the fact
that $\psi_s$ is an approximate eigenmode for Rayleigh equation, (\ref{mode1}) and (\ref{mode2}) can be proved using an implicit function
theorem (see \cite{GrN1} for complete details). We also get, for every positive $k$, that
\beq \label{structurepsi2}
| \partial_y^k \psi^\nu (y) | \le {C_k \over | \eps^{(k - 1) / 2} |Ê}  e^{- C y / | \sqrt{\eps}|Ê} + C_k e^{- \beta y} 
\eeq
for some positive $\beta$.
Note that  $\psi^\nu$ has a boundary layer behavior. This is natural since there is a change of boundary conditions 
between Rayleigh and Orr Sommerfeld equations. The size of the boundary layer is of order $\eps^{1/2} \approx \nu^{1/4}$ (for fixed $\alpha$), which is introduced to
balance $\eps \partial_y^4$ and $\partial_y^2$. This sublayer is known as "viscous sublayer" in the physical literature \cite{Reid}.
Note that $\psi_s$ and $\psi_f$ are analytic on a strip $| \Im y | \le \sigma_0$ for some $\sigma_0 > 0$.

Once the linear instability is constructed, we may construct an approximate solution of the form
\beq \label{uapp}
u^{app}(t,x,y) = \sum_{j=1}^M \nu^{ Nj} u^{j}(t,x,y) 
\eeq
starting from the maximal unstable eigenmode
$$
u^1(t,x,y) = \Re \Bigl( \psi^\nu e^{i \alpha (x - c^\nu t)} \Bigr) .
$$
The construction of such an approximate solution is routine work, and involves successive resolutions of linearized Navier
Stokes equations
\begin{equation}\label{eqs-un}
\begin{aligned}
\partial_t u^n + (U_0 \cdot \nabla) u^n + (u^n \cdot \nabla) U_0 -\sqrt \nu \Delta u^n + \nabla p^n &= F_n,
\\
\nabla \cdot  u^n &= 0,
\end{aligned} \end{equation}
together with the zero initial data and zero Dirichlet boundary conditions, with
$$
F_n = - \sum_{1 \le j \le n-1} (u^j \cdot \nabla ) u^{n-j} .
$$
Note that by construction \cite{Grenier00CPAM}, $u^{app}$ solves Navier Stokes equations, up to a very small term $R^M$, of order
$\nu^{N(M+1)} e^{(M+1) \Re \lambda_\nu t}$, with $\lambda_\nu = -i\alpha c^\nu$, the maximal unstable eigenvalue. 

Let  $u^\nu$ be the solution of Navier Stokes equations
with initial data $u^{app}(0)$. A natural next step is to try to bound the difference $v : = u^\nu - u^{app}$ in $L^2$ norm. However,
we only get
$$
\frac{d}{dt} \| v \|_{L^2}^2 \le C (1+ \| \nabla u^{app} \|_{L^\infty}) \| v \|_{L^2}^2 + \| R^M \|_{L^2}^2 .
$$
As there is a boundary layer in $u^{app}$, $\| \nabla u^{app} \|_{L^\infty}$ is unbounded as $\nu \to 0$, and thus, this energy inequality
is useless when $u^{app} - U$ is of order greater than $\nu^{1/4}$. Using only energy estimates, we cannot obtain $O(1)$ instability
in $L^\infty$, and are limited to $O(\nu^{1/4})$ instability (this is the main limitation of \cite{Grenier00CPAM}).

The reason of this failure is that the viscous sublayer becomes linearly unstable in the inviscid limit \cite{Reid,GGN3}. 
The next natural idea is to work with analytic initial
data and to hope that analyticity will kill sublayer instabilities, exactly as in Caflisch and Sammartino work \cite{SammartinoCaflisch2}, 
where the authors used
analyticity to kill any instability of Prandtl's layers. However in the current setting, we want to get control over time intervals of order
$\log \nu^{-1}$, namely on unbounded time intervals. As the analyticity radius decreases with time, it becomes small, of order
$1 / t$ as $t$ increases, and is therefore too small to control instabilities in large times. This strategy therefore fails.

In this paper, we will directly prove that the series (\ref{uapp}) converges as $M$ goes to $+ \infty$, in analytic spaces.
This leads to a direct construction of a genuine solution of Navier Stokes equations, defined by
\beq \label{utrue}
u^\nu(t,x,y) = U +  \sum_{j=1}^{+ \infty} \nu^{ Nj} u^{j}(t,x,y) 
\eeq
The underlying idea is the following: if we try to control the difference between the true solution and an approximate one, we have
to bound solutions of linearized Navier Stokes equations. However because of the shear, vertical derivatives of such solutions
increase polynomially in time, simply because of the term $\partial_t + U(y) \partial_x$, which generates high normal derivatives.
This polynomial growth can not be avoided, except if we are working with a finite sum of eigenmodes. For eigenmodes, we simply
have an exponential growth, without polynomial disturbances. As a matter of fact, all the terms appearing in
(\ref{utrue}) are driven by eigenmodes through Orr Sommerfeld equations.

The proof of the convergence of (\ref{utrue}) relies on the accurate description of the Green function of Orr Sommerfeld equations,
detailed in \cite{GrN1}, and on the introduction of so called generator functions. Generator functions combine all the norms
of all the $u^j$, and can be seen as a time and space depending norm. We prove that these generator functions satisfy
a Hopf inequality, which allows us to get analytic bounds which are uniform in $M$.

The plan of this paper is the following. We begin with the definition of generator functions. We then study the generator function of
solutions of Laplace equations, and then of Orr Sommerfeld equations. We then detail the construction of $u^j$ and derive uniform
bounds on the generator functions, which ends the proof.



\section{Generator functions}



\subsection{Definition}\label{sec-Gen}


Let $f(x,y)$ be a smooth function. 
For $z_1,z_2\ge 0$, we define the following two functions, called in this paper "generator functions"
\beq \label{Genbl}
\begin{aligned}
Gen_0(f)(z_1,z_2) &= \sum_{\alpha \in \ZZ} \sum_{\ell \ge 0}  e^{z_1 |\alpha|}  \|  \partial_y^\ell f_\alpha \|_{\ell,0}{z_2^\ell \over \ell ! } ,
\\Gen_\delta(f)(z_1,z_2) &= \sum_{\alpha \in \ZZ}\sum_{\ell \ge 0}  e^{z_1 |\alpha|} \| \partial_y^\ell f_\alpha \|_{\ell,\delta} {z_2^\ell \over \ell ! } ,
\end{aligned}\eeq
in which $f_\alpha(y)$ denotes the Fourier transform of $f(x,y)$ with respect to the $x$ variable. In these sums, 
$$
\begin{aligned}
\| f_\alpha \|_{\ell,0} &= \sup_{y} \varphi(y)^\ell| f_\alpha(y) | ,
\\\| f_\alpha \|_{\ell,\delta} &= \sup_{y} \varphi(y)^{\ell}| f_\alpha(y) | \Bigl( \delta^{- 1} e^{-y / \delta} + 1 \Bigr)^{-1} ,
\end{aligned}$$
where 
$$
\varphi(y) = \frac{y}{1+y}
$$ 
and where the boundary layer thickness $\delta$ is equal to
$$
\delta = \gamma_0 \nu^{1/4}
$$
for some sufficiently large $\gamma_0>0$. 
More precisely, $\gamma_0 $ will be chosen so that $\gamma_0^{-1} \le \sqrt{\Re \lambda_0/2}$, 
where $\lambda_0$ is the maximal unstable eigenvalue of the linearized Euler equations around $U$.  

Note that $Gen_0$, $Gen_\delta$ and all their derivatives are non negative for positive $z_1$ and $z_2$.
These generator functions $Gen_0(\cdot)$ and $Gen_\delta(\cdot)$ will respectively control the velocity and the vorticity of the solutions
of Navier Stokes equations.

For convenience, we introduce the following generator functions of one-dimensional functions $f = f(y)$:
\beq \label{Genbl-a}
\begin{aligned}
Gen_{0,\alpha}(f)(z_2) &= \sum_{\ell \ge 0}  \|  \partial_y^\ell f \|_{\ell,0}{z_2^\ell \over \ell ! } ,
\\Gen_{\delta,\alpha}(f)(z_2) &= \sum_{\ell \ge 0}   \| \partial_y^\ell f \|_{\ell,\delta} {z_2^\ell \over \ell ! } .
\end{aligned}\eeq
Of course, it follows that 
$$
Gen_0(f) = \sum_{\alpha\in \ZZ} e^{z_1|\alpha|} Gen_{0,\alpha} (f_\alpha)
$$ 
for functions of two variables $f = f(x,y)$, and similarly for $Gen_\delta$.


\subsection{Properties}


For any $\ell, \ell'\ge 0$, we have
\begin{equation}\label{alg}
\begin{aligned}
\| f\|_{\ell, \delta} \le \|f\|_{\ell, 0},& \qquad \| f\|_{\ell + 1, \delta} \le \|f\|_{\ell, \delta}, \\
\| fg\|_{\ell,\delta} &\le \| f \|_{\ell',0}\|g\|_{\ell-\ell',\delta }.
\end{aligned}\end{equation}
Next, we have the following Proposition 

\begin{prop}\label{prop-Gen}
Let $f$ and $g$ be two functions. For non negative $z_1$ and $z_2$,
there hold
$$
Gen_\delta(f g) \le  Gen_0(f) Gen_\delta(g) ,
$$
$$
Gen_\delta(\partial_x f) = \partial_{z_1} Gen_\delta(f), 
\qquad
Gen_\delta(\partial_x^2 f) = \partial_{z_1}^2 Gen_\delta(f), 
$$
$$Gen_\delta(\varphi \partial_y f) \le C_0 \partial_{z_2} Gen_\delta(f),$$
for some universal constant $C_0$, provided $| z_2 |$ is small enough. 
\end{prop}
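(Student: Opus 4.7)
The plan is to dispatch (ii) and (iii) directly from the Fourier definition, derive (i) by a Leibniz/convolution argument combined with (2.4), and reserve the real work for (iv) which requires a combinatorial computation against $\varphi$ and its derivatives.

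For (ii) and (iii), note that $(\partial_x f)_\alpha = i\alpha f_\alpha$, so the only change between $Gen_\delta(\partial_x f)$ and $Gen_\delta(f)$ is the insertion of $|\alpha|$ in the summand. Since $|\alpha|\, e^{z_1|\alpha|} = \partial_{z_1} e^{z_1|\alpha|}$, part (ii) is immediate, and (iii) follows by iteration.

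For (i), I would expand $(fg)_\alpha = \sum_\beta f_{\alpha-\beta} g_\beta$ and apply the Leibniz rule to get $\partial_y^\ell(f_{\alpha-\beta} g_\beta) = \sum_{k=0}^\ell \binom{\ell}{k} \partial_y^k f_{\alpha-\beta}\, \partial_y^{\ell-k} g_\beta$. The third inequality of (2.4) then yields
$$\|\partial_y^k f_{\alpha-\beta}\, \partial_y^{\ell-k} g_\beta\|_{\ell,\delta} \le \|\partial_y^k f_{\alpha-\beta}\|_{k,0}\, \|\partial_y^{\ell-k} g_\beta\|_{\ell-k,\delta}.$$
Using $\binom{\ell}{k} z_2^\ell/\ell! = (z_2^k/k!)(z_2^{\ell-k}/(\ell-k)!)$ and $e^{z_1|\alpha|} \le e^{z_1|\alpha-\beta|} e^{z_1|\beta|}$, the quadruple sum over $\alpha,\beta,\ell,k$ factorizes exactly as $Gen_0(f)\, Gen_\delta(g)$.

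The substantive estimate is (iv). Since the $e^{z_1|\alpha|}$ factor passes through linearly, it suffices to prove $Gen_{\delta,\alpha}(\varphi \partial_y f) \le C_0 \partial_{z_2} Gen_{\delta,\alpha}(f)$ for scalar $f$. Leibniz gives
$$\partial_y^\ell(\varphi \partial_y f) = \sum_{k=0}^\ell \binom{\ell}{k} \varphi^{(k)} \partial_y^{\ell+1-k} f.$$
The $k=0$ term contributes $\|\varphi\, \partial_y^{\ell+1} f\|_{\ell,\delta} = \|\partial_y^{\ell+1} f\|_{\ell+1,\delta}$ since $\varphi(y)^\ell \varphi(y) = \varphi(y)^{\ell+1}$, and summing against $z_2^\ell/\ell!$ recovers exactly $\partial_{z_2} Gen_{\delta,\alpha}(f)$. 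For $k \ge 1$, the formula $\varphi = 1 - (1+y)^{-1}$ yields $|\varphi^{(k)}(y)| \le k!\,(1+y)^{-k-1}$, and $\varphi \le 1$ combined with $k \ge 1$ gives $\varphi(y)^\ell \le \varphi(y)^{\ell+1-k}$; hence
$$\varphi(y)^\ell\, |\varphi^{(k)}(y)\, \partial_y^{\ell+1-k} f(y)| \le k!\, \varphi(y)^{\ell+1-k}\, |\partial_y^{\ell+1-k} f(y)|.$$
Taking the supremum weighted by $(\delta^{-1}e^{-y/\delta}+1)^{-1}$ and using $\binom{\ell}{k} k!/\ell! = 1/(\ell-k)!$, then substituting $m = \ell+1-k$ and interchanging the order of summation, produces a geometric series $\sum_{\ell \ge m} z_2^{\ell-m} = (1-z_2)^{-1}$; the full $k \ge 1$ contribution collapses to $z_2(1-z_2)^{-1} \partial_{z_2} Gen_{\delta,\alpha}(f)$. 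Added to the $k=0$ term this gives $(1-z_2)^{-1} \partial_{z_2} Gen_{\delta,\alpha}(f)$, so choosing $|z_2| \le 1/2$ yields $C_0 = 2$. The only bookkeeping subtlety throughout is matching the Leibniz binomials against the Taylor weights $z_2^\ell/\ell!$; the identity $\binom{\ell}{k}/\ell! = (k!(\ell-k)!)^{-1}$ is what makes every estimate telescope into a single derivative or product of the generator function, and once this is noticed no further ideas are required.
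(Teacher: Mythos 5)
Your proposal is correct and follows essentially the same route as the paper: convolution plus Leibniz and the weight-splitting inequality \eqref{alg} for the product estimate, the identity $|\alpha|e^{z_1|\alpha|}=\partial_{z_1}e^{z_1|\alpha|}$ for the $\partial_x$ identities, and for $\varphi\partial_y f$ the same split between the $k=0$ term (absorbed exactly into $\partial_{z_2}Gen_\delta$ via $\varphi^\ell\cdot\varphi=\varphi^{\ell+1}$) and the $k\ge 1$ terms controlled by the derivatives of the analytic function $\varphi$. The only difference is cosmetic: you compute $\|\varphi^{(k)}\|_{0,0}=k!$ explicitly and obtain the geometric series with the explicit constant $C_0=2$ for $|z_2|\le 1/2$, where the paper simply invokes convergence of $\sum_{\ell'}\|\partial_y^{\ell'}\varphi\|_{0,0}z_2^{\ell'}/\ell'!$ for small $z_2$.
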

\begin{proof} First, note that 
$$ (fg)_\alpha = \sum_{\alpha'\in \ZZ} f_{\alpha'} g_{\alpha - \alpha'} ,$$
and
$$
 \partial_y^\beta (f g)_\alpha =
\sum_{\alpha'\in \ZZ} \sum_{0 \le \beta' \le \beta} 
{\beta ! \over \beta' ! (\beta - \beta') !}  \partial_y^{\beta'} f_{\alpha'} 
\partial_y^{\beta - \beta'} g_{\alpha-\alpha'} .
$$
Thus, 
$$
\begin{aligned}
&Gen_\delta(fg)(z_1,z_2) 
\\&= \sum_{\alpha \in \ZZ}\sum_{\beta \ge 0} e^{z_1|\alpha|}\| \partial_y^\beta (fg)_\alpha \|_{\beta,\delta} {z_2^\beta \over \beta ! } 
\\
&\le \sum_{\alpha \in \ZZ}\sum_{\beta \ge 0} \sum_{\alpha'\in \ZZ}\sum_{0 \le \beta' \le \beta} e^{z_1|\alpha|} \|\partial_y^{\beta'} f_{\alpha'}\|_{\beta',0}
\| \partial_y^{\beta - \beta'} g_{\alpha-\alpha'} \|_{\beta-\beta',\delta}  
{z_2^\beta \over \beta' ! (\beta - \beta') !}  
\\
&\le \sum_{\alpha,\alpha'\in \ZZ}\sum_{ \beta \ge 0} \sum_{\beta \ge \beta' } e^{z_1|\alpha'|} e^{z_1|\alpha-\alpha'|} \|\partial_y^{\beta'} f_{\alpha'}\|_{\beta',0}
\| \partial_y^{\beta - \beta'} g_{\alpha - \alpha'} \|_{\beta-\beta',\delta} 
{z_2^{\beta'}z_2^{\beta - \beta'} \over \beta' ! (\beta - \beta') !}  
\\
&\le Gen_0(f)(z_1,z_2) Gen_\delta (g)(z_1,z_2).
\end{aligned}$$
Next, we write 
$$
\begin{aligned} Gen_\delta(\partial_xf) &= \sum_{\alpha\in \ZZ }\sum_{ \ell \ge 0} e^{z_1|\alpha|}\| \alpha \partial_y^\ell f_\alpha \|_{\ell,\delta} {z_2^\ell \over \ell ! } 
\\&= \partial_{z_1}
\sum_{\alpha\in \ZZ }\sum_{ \ell \ge 0} e^{z_1|\alpha|}\|  \partial_y^\ell f_\alpha \|_{\ell,\delta} {z_2^\ell \over \ell ! } 
= 
\partial_{z_1} Gen_\delta (f),
\end{aligned} $$
and similarly for $Gen_\delta(\partial_x^2 f)$.
Finally, we compute 
$$
\begin{aligned} Gen_\delta(\varphi \partial_yf) 
&= \sum_{\alpha\in \ZZ }\sum_{ \ell \ge 0} e^{z_1|\alpha|}\|  \partial_y^\ell (\varphi \partial_yf_\alpha) \|_{\ell ,\delta} {z_2^\ell \over \ell ! } 
\\
&\le \sum_{\alpha\in \ZZ } \sum_{ \ell \ge 0} 
\sum_{0\le \ell'\le \ell}e^{z_1|\alpha|}\|  \partial_y^{\ell'} \varphi \partial^{\ell - \ell' + 1}_yf_\alpha \|_{\ell ,\delta} {z_2^\ell \over \ell' ! (\ell - \ell')! } 
\\
&\le \Bigl( 1 + \sum_{\ell'\ge 0} \|\partial_y^{\ell'} \varphi\|_{0,0} {z_2^{\ell'} \over \ell' ! }\Bigr)  
 \sum_{\alpha\in \ZZ }\sum_{ \ell -\ell'\ge 0} 
  e^{z_1|\alpha|}\|  \partial^{\ell - \ell' + 1}_yf_\alpha \|_{\ell - \ell'+1,\delta} {z_2^{\ell -\ell'}\over (\ell - \ell')! } 
\\&\le C_0 \partial_{z_2} Gen_\delta (f),
\end{aligned} $$
where we distinguished the cases $\ell' = 0$ and $\ell' > 0$.
As $\varphi$ is analytic,
$\sum_{\ell'\ge 0} \|\partial_y^{\ell'} \varphi\|_{0,0} {z_2^{\ell'} / \ell' ! }$ converges provided $z_2$ is small enough.
The Proposition follows. 
\end{proof}


\subsection{Generator function and divergence free condition}


Note that for any functions $u$ and $g$, Proposition \ref{prop-Gen} yields 
\begin{equation}\label{bd-udxg}
Gen_\delta(u \partial_x g) \le Gen_0(u) \partial_{z_1} Gen_\delta(g) .
\end{equation}
This is not true for $Gen_\delta(v \partial_y g)$, due to the boundary layer weight. We will investigate $Gen_\delta(v \partial_y g)$ 
when $(u,v)$ satisfies the divergence free condition, namely 
$$
\partial_x u + \partial_y v = 0 .
$$
Precisely, we will prove the following Proposition. 
\begin{prop}\label{prop-Gendy} For $| z_2 | \le 1$, there holds
$$
Gen_\delta(v \partial_y g) \le C \Bigl(Gen_0(v) + \partial_{z_1} Gen_0(u) \Bigr) \partial_{z_2} Gen_\delta(g) .
$$
\end{prop}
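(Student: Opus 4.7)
The plan is to expand $\partial_y^\ell(v\,\partial_y g)$ by Leibniz, handle the Fourier convolution in $x$ exactly as in the proof of Proposition~\ref{prop-Gen}, and split the resulting sum according to whether $v$ is differentiated in $y$ or not. Writing, for each pair of Fourier modes,
$$
\partial_y^\ell(v_{\alpha'}\partial_y g_{\alpha-\alpha'}) = v_{\alpha'}\,\partial_y^{\ell+1}g_{\alpha-\alpha'} + \sum_{\ell'=1}^{\ell}\binom{\ell}{\ell'}\partial_y^{\ell'}v_{\alpha'}\,\partial_y^{\ell-\ell'+1}g_{\alpha-\alpha'},
$$
I would treat the two pieces separately. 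For $\ell'\ge 1$ the divergence-free identity gives $\partial_y^{\ell'}v_{\alpha'} = -\partial_x\partial_y^{\ell'-1}u_{\alpha'}$, and the algebra inequality \eqref{alg} applied with the split of indices $\ell'-1$ and $\ell-\ell'+1$ (summing to $\ell$) produces
$$
\|\partial_y^{\ell'}v_{\alpha'}\,\partial_y^{\ell-\ell'+1}g_{\alpha-\alpha'}\|_{\ell,\delta} \le \|\partial_x\partial_y^{\ell'-1}u_{\alpha'}\|_{\ell'-1,0}\,\|\partial_y^{\ell-\ell'+1}g_{\alpha-\alpha'}\|_{\ell-\ell'+1,\delta}.
$$
Reindexing with $m=\ell'-1$ and $k=\ell-\ell'$ turns $\binom{\ell}{\ell'}z_2^\ell/\ell!$ into $z_2\cdot z_2^m z_2^k/[(m+1)!\,k!]$, so the bound $1/(m+1)!\le 1/m!$ combined with the same bookkeeping as in Proposition~\ref{prop-Gen} yields the clean factorization $z_2\cdot \partial_{z_1}Gen_0(u)\cdot \partial_{z_2}Gen_\delta(g)$, which for $|z_2|\le 1$ is absorbed into the right-hand side of the claim.

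The main obstacle is the undifferentiated-$v$ term $v_{\alpha'}\partial_y^{\ell+1}g_{\alpha-\alpha'}$, since $\|\cdot\|_{\ell,\delta}$ carries only the weight $\varphi^\ell$, whereas $\partial_{z_2}Gen_\delta(g)$ demands the stronger weight $\varphi^{\ell+1}$ on $\partial_y^{\ell+1}g$; without an extra $\varphi$ one cannot close the estimate, and this is precisely the obstruction noted just before the statement. The key is to manufacture that missing $\varphi$ from $v$ itself, using the no-slip condition $v(x,0)=0$ (which holds for the Navier--Stokes velocities throughout this paper) together with the divergence-free constraint. I would write
$$
v_{\alpha'}(y) = \int_0^y \partial_y v_{\alpha'}(y')\,dy' = -i\alpha'\int_0^y u_{\alpha'}(y')\,dy',
$$
so $|v_{\alpha'}(y)|\le |\alpha'|\,y\,\|u_{\alpha'}\|_{0,0}$, and combine with the trivial bound $|v_{\alpha'}(y)|\le \|v_{\alpha'}\|_{0,0}$. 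Since $\varphi(y)\ge y/2$ for $y\le 1$ and $\varphi(y)\ge 1/2$ for $y\ge 1$, both estimates merge into the pointwise inequality
$$
|v_{\alpha'}(y)| \le C\,\varphi(y)\bigl(\|v_{\alpha'}\|_{0,0} + |\alpha'|\,\|u_{\alpha'}\|_{0,0}\bigr),
$$
which is the exact replacement of the algebraic factor $\varphi$ that was supplied by hand in the $\varphi\partial_y f$ estimate of Proposition~\ref{prop-Gen}.

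Plugging this into the $\ell'=0$ term gives $\|v_{\alpha'}\partial_y^{\ell+1}g_{\alpha-\alpha'}\|_{\ell,\delta} \le C(\|v_{\alpha'}\|_{0,0}+|\alpha'|\|u_{\alpha'}\|_{0,0})\|\partial_y^{\ell+1}g_{\alpha-\alpha'}\|_{\ell+1,\delta}$. I would conclude by summing over $\alpha,\alpha',\ell$ against $e^{z_1|\alpha|}z_2^\ell/\ell!$, splitting $e^{z_1|\alpha|}\le e^{z_1|\alpha'|}e^{z_1|\alpha-\alpha'|}$ and reindexing $m=\ell+1$ in the $g$-sum: the sum factors as
$$
\Bigl(\sum_{\alpha'}e^{z_1|\alpha'|}\bigl(\|v_{\alpha'}\|_{0,0}+|\alpha'|\|u_{\alpha'}\|_{0,0}\bigr)\Bigr)\cdot \partial_{z_2}Gen_\delta(g),
$$
whose first factor is the $\ell=0$ slice of $Gen_0(v)+\partial_{z_1}Gen_0(u)$ and is therefore dominated, by non-negativity of every term of the generator series, by $Gen_0(v)+\partial_{z_1}Gen_0(u)$ itself. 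Adding this to the already-controlled contribution from $\ell'\ge 1$ yields the stated bound. The delicate and genuinely new step is the $\varphi$-extraction: it is the analogue of the argument used for $\varphi\partial_y f$ in Proposition~\ref{prop-Gen}, but carried out through the boundary condition and incompressibility rather than being supplied explicitly by the multiplier $\varphi$.
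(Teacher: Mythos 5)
Your proposal is correct and follows essentially the same route as the paper: the Leibniz split, the use of $\partial_y v_{\alpha'}=-i\alpha' u_{\alpha'}$ for the differentiated-$v$ terms, and the extraction of the missing weight via $v_{\alpha'}(y)=-i\alpha'\int_0^y u_{\alpha'}(y')\,dy'$ combined with $\varphi(y)\ge y/2$ for $y\le 1$ and $\varphi(y)\ge 1/2$ for $y\ge 1$ are exactly the paper's argument, leading to the same bound $\|\varphi^{-1}v_{\alpha'}\|_{0,0}\le C(\|v_{\alpha'}\|_{0,0}+\|\alpha' u_{\alpha'}\|_{0,0})$ and the same factorization against $\partial_{z_2}Gen_\delta(g)$.
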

This Proposition is linked to the deep structure of Navier Stokes equations, namely to the precise link between the transport operator 
and the incompressibility condition. Note that we "loose" one derivative: our bound involves $\partial_x u$. 
\begin{proof}
We compute 
$$
Gen_\delta(v \partial_y g) = \sum_{\alpha\in\ZZ}\sum_{\beta\ge 0} e^{z_1|\alpha|}  
\| \partial_y^\beta ( v \partial_y g)_\alpha \|_{\beta,\delta} 
{z_2^\beta \over \beta ! } ,
$$
in which $$
\partial_y^\beta  ( v \partial_y g)_\alpha
= \sum_{\alpha'\in\ZZ} \sum_{0 \le \beta' \le \beta} 
{\beta ! \over \beta'! (\beta - \beta')!} \partial_y^{\beta'} v_{\alpha'}
 \partial_y^{\beta - \beta' + 1} g_{\alpha-\alpha'}.
$$
For $\beta'>0$, using the divergence-free condition $\partial_yv_\alpha = -i\alpha u_\alpha $, we estimate  
$$\begin{aligned}
\|
\partial_y^{\beta'} v_{\alpha'}\partial_y^{\beta - \beta' + 1} g_{\alpha-\alpha'}\|_{\beta,\delta}
\le \| \alpha' \partial_y^{\beta'-1} u_{\alpha'} \|_{\beta'-1,0}
\| \partial_y^{\beta - \beta' + 1} g_{\alpha-\alpha'}\|_{\beta - \beta' + 1,\delta} .
 \end{aligned}$$
On the other hand, for $\beta' =0$, we estimate 
$$\begin{aligned}
\|
v_{\alpha'}
\partial_y^{\beta  + 1} g_{\alpha-\alpha'}\|_{\beta,\delta}
\le 
\| \varphi^{-1} v_{\alpha'} \|_{0,0}
\|\partial_y^{\beta + 1} g_{\alpha-\alpha'}\|_{\beta  + 1,\delta}
. \end{aligned}$$
We note that for $y\ge 1$, $\varphi(y) \ge 1/2$ and hence 
$$
\| \chi_{\{y\ge 1\}}\varphi^{-1} v_{\alpha'}\|_{0,0} \le 2 \|v_{\alpha'} \|_{0,0}. 
$$ 
When $y\le 1$, using again the divergence-free condition, 
we write $$
v_{\alpha'}(y) = - i\alpha' \int_0^y  u_{\alpha'}(y') dy'  = - i\alpha'  y \int_0^1 u_{\alpha'}(x,\theta y) d\theta.
$$
Therefore, 
$
\varphi(y)^{-1}|v_{\alpha'} (y)| \le \sup_y |\alpha' u_{\alpha'} (y)|$
for $y\le 1$. This proves that 
$$
 \| \varphi^{-1} v_{\alpha'} \|_{0,0} \le 2 \| v_{\alpha'} \|_{0,0} + \| \alpha' u_{\alpha'} \|_{0,0}.
 $$
Combining these inequalities for any $\alpha\in \ZZ$ and $\beta\ge 0$, we obtain 
$$
\begin{aligned}
& \| \partial_y^\beta ( v \partial_y g)_\alpha \|_{\beta,\delta} 
\le \sum_{\alpha'\in \ZZ}
(2 \| v_{\alpha'} \|_{0,0} + \| \alpha' u_{\alpha'} \|_{0,0} )\| \partial_y^{\beta +1} g_{\alpha-\alpha'} \|_{\beta+1,\delta}  
\\&\quad + \sum_{\alpha'\in \ZZ} \sum_{1 \le \beta' \le \beta} \| \alpha'\partial_y^{\beta'-1} u_{\alpha'} \|_{\beta'-1,0}
\| \partial_y^{\beta - \beta' + 1} g_{\alpha-\alpha'}\|_{\beta - \beta' + 1,\delta} {\beta ! \over \beta' ! (\beta - \beta')!}
.\end{aligned}$$
It remains to multiply by $e^{z_1 | \alphaÊ|} z_2^\beta / \beta !$ and to sum all the terms over $\alpha$, $\alpha'$, $\beta$ and
$\beta'$. The second term in the right hand side is bounded by the product of
$$
\sum_\alpha \sum_\beta e^{| \alpha | z_1}  \| \alpha \partial_y^\beta u_{\alpha} \|_{\beta,0} {z_2^{\beta + 1} \over (\beta +1) !},
$$
which is bounded by $Gen_0(\partial_x u)$ provided $| z_2  | \le 1$
and of
$$
\sum_\alpha \sum_\beta e^{| \alpha | z_1} \| \partial_y^{\beta+1} g_\alpha \|_{\beta +1} {z_2^\beta \over \beta !}
,$$
which equals $\partial_{z_2} Gen_\delta(g)$. The first term is similar, which ends the proof.
\end{proof}
Let us now  bound derivatives of the transport term $u \partial_x g + v \partial_y g$.
\begin{prop}\label{prop-Gendy} 
Let
$$
{\cal A} = \Bigl(Id + \partial_{z_1} + \partial_{z_2}\Big) Gen_\delta
$$
and $$
{\cal B} = Gen_0(u) + Gen_0(v) + \partial_{z_1} Gen_0(u) + {\cal A}(g).
$$
Then
$$
{\cal A} (u \partial_x g + v \partial_y g) \le C {\cal B} \partial_{z_1} {\cal B} + C {\cal B} \partial_{z_2} {\cal B} .
$$
\end{prop}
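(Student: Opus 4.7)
By linearity, $\mathcal{A}(u\partial_x g + v\partial_y g)$ splits into the six pieces $\mathcal{D}\, Gen_\delta(T)$ with $\mathcal{D}\in\{\mathrm{Id},\partial_{z_1},\partial_{z_2}\}$ and $T\in\{u\partial_x g,\, v\partial_y g\}$. I would treat all six uniformly, built on top of two base estimates already available: first, $Gen_\delta(u\partial_x g)\le Gen_0(u)\,\partial_{z_1} Gen_\delta(g)$, which is (\ref{bd-udxg}) and follows from Proposition \ref{prop-Gen}; and second, $Gen_\delta(v\partial_y g)\le C(Gen_0(v)+\partial_{z_1} Gen_0(u))\,\partial_{z_2} Gen_\delta(g)$, which is the preceding proposition.

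The next step is to differentiate these two base inequalities in $z_1$ and in $z_2$ using Leibniz. What makes this legitimate is that the proofs of Propositions \ref{prop-Gen} and \ref{prop-Gendy} are really carried out coefficient-by-coefficient in the Taylor expansion in $(z_1,z_2)$: the step $e^{z_1|\alpha|}\le e^{z_1|\alpha'|}e^{z_1|\alpha-\alpha'|}$ is coefficient-wise in $z_1$ (since $|\alpha|^j\le (|\alpha'|+|\alpha-\alpha'|)^j$), and the corresponding Cauchy-product bound in $z_2$ is explicit. Coefficient-wise inequalities between power series with non-negative coefficients are preserved by $\partial_{z_1}$ and $\partial_{z_2}$, which is exactly the Leibniz identity on each side. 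Alternatively, the $\partial_{z_1}$ derivatives can be re-derived directly by combining the identity $\partial_{z_1} Gen_\delta(f)=Gen_\delta(\partial_x f)$ with the fact that $(\partial_x u,\partial_x v)$ is still divergence-free, e.g.
\[
\partial_{z_1} Gen_\delta(u\partial_x g) = Gen_\delta\bigl((\partial_x u)\partial_x g + u\,\partial_x^2 g\bigr) \le \partial_{z_1} Gen_0(u)\cdot \partial_{z_1} Gen_\delta(g) + Gen_0(u)\cdot \partial_{z_1}^2 Gen_\delta(g),
\]
and likewise for $\partial_{z_1} Gen_\delta(v\partial_y g)$ via the preceding proposition applied to $(\partial_x u,\partial_x v)$; the $\partial_{z_2}$ pieces are then handled cleanly by the coefficient-wise observation.

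After this step, each of the six contributions is a sum of products $X\cdot Y$, where $X$ is one of $Gen_0(u), Gen_0(v), \partial_{z_1} Gen_0(u)$ or a single $z_j$-derivative of one of these, and $Y$ is a first or second partial derivative of $Gen_\delta(g)$ in $(z_1,z_2)$. The closure is then purely algebraic: by construction $\mathcal{B}$ contains $Gen_0(u), Gen_0(v), \partial_{z_1} Gen_0(u), Gen_\delta(g), \partial_{z_1} Gen_\delta(g), \partial_{z_2} Gen_\delta(g)$ as non-negative direct summands, so $\partial_{z_j}\mathcal{B}$ contains all of their first and second partial derivatives for $j\in\{1,2\}$. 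In every product $XY$ arising above, one factor is majorized by $\mathcal{B}$ and the complementary one by $\partial_{z_1}\mathcal{B}$ or $\partial_{z_2}\mathcal{B}$; for instance $Gen_0(u)\cdot\partial_{z_1}^2 Gen_\delta(g)\le \mathcal{B}\,\partial_{z_1}\mathcal{B}$, and $\partial_{z_1} Gen_0(u)\cdot\partial_{z_1}\partial_{z_2} Gen_\delta(g)\le \mathcal{B}\,\partial_{z_2}\mathcal{B}$. Summing the contributions yields the announced bound $C\mathcal{B}\partial_{z_1}\mathcal{B}+C\mathcal{B}\partial_{z_2}\mathcal{B}$.

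The main obstacle is the one flagged above: rigorously justifying that $\partial_{z_j}$ may be transferred through the two base inequalities. For the $\partial_{z_1}$ derivatives the explicit re-derivation bypasses any such concern, but for the $\partial_{z_2}$ derivative of the $v\partial_y g$ bound one really does need the coefficient-wise observation, since there is no identity expressing $\partial_{z_2} Gen_\delta(h)$ as $Gen_\delta$ of an explicit function of $h$ (one only has the one-sided bound $Gen_\delta(\varphi\partial_y f)\le C_0\partial_{z_2} Gen_\delta(f)$ from Proposition \ref{prop-Gen}). Once this accounting is done, the combinatorics of matching each product with the correct slot of $\mathcal{B}$ or $\partial_{z_j}\mathcal{B}$ is routine.
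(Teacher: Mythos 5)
Most of your plan coincides with the paper's proof: the bound on the undifferentiated terms is exactly \eqref{bd-udxg} plus the preceding divergence-free proposition, the $\partial_{z_1}$ pieces are handled in the paper precisely by your "alternative" route ($\partial_{z_1}Gen_\delta(f)=Gen_\delta(\partial_x f)$, Leibniz, and the fact that $(\partial_x u,\partial_x v)$ is again divergence free), and your final bookkeeping with $\mathcal{B}$ and $\partial_{z_j}\mathcal{B}$ matches the paper's conclusion, using as you do that all terms are non-negative so that e.g.\ $\partial_{z_1}Gen_\delta(g)\le \partial_{z_1}\mathcal{A}(g)\le\partial_{z_1}\mathcal{B}$ and $\partial_{z_2}Gen_\delta(g)\le\mathcal{B}$.

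The gap is exactly at the point you flag as the crux, the $\partial_{z_2}$ derivative of the $v\,\partial_y g$ bound, and your proposed justification does not apply as stated. The inequality $Gen_\delta(v\partial_y g)\le C\,(Gen_0(v)+\partial_{z_1}Gen_0(u))\,\partial_{z_2}Gen_\delta(g)$ is \emph{not} a coefficient-wise inequality between power series: its derivation ends with the replacement of $\sum_{\alpha,\beta} e^{|\alpha|z_1}\|\alpha\,\partial_y^\beta u_\alpha\|_{\beta,0}\,z_2^{\beta+1}/(\beta+1)!$ by $Gen_0(\partial_x u)=\partial_{z_1}Gen_0(u)$, which rests on the pointwise bound $z_2^{\beta+1}/(\beta+1)!\le z_2^{\beta}/\beta!$ for $0\le z_2\le1$ and compares coefficients attached to \emph{different} norms ($\|\alpha\partial_y^\beta u_\alpha\|_{\beta,0}$ on the left versus $|\alpha|\,\|\partial_y^{\beta+1}u_\alpha\|_{\beta+1,0}$ on the right). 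This is why the proposition carries the hypothesis $|z_2|\le1$ at all; a genuinely coefficient-wise inequality would need no such restriction, and pointwise inequalities on an interval cannot be differentiated. Your argument can be repaired, but only by differentiating the coefficient-wise \emph{precursor} of that proposition, i.e.\ keeping the antiderivative-type factor $\sum_{\alpha,\beta} e^{|\alpha|z_1}\|\alpha\partial_y^\beta u_\alpha\|_{\beta,0}\,z_2^{\beta+1}/(\beta+1)!$ (whose $z_2$-derivative is exactly $\partial_{z_1}Gen_0(u)$) and invoking $|z_2|\le1$ only after differentiating. Doing this is in substance what the paper does: it does not differentiate the earlier proposition at all, but re-runs the coefficient-level computation for $\partial_{z_2}Gen_\delta(v\partial_y g)$ directly — Leibniz on $\partial_y^{n+1}(v_{\alpha'}\partial_y g_{\alpha-\alpha'})$, the divergence-free substitution $\partial_y v_{\alpha'}=-i\alpha' u_{\alpha'}$ for $k\ge1$, and the separate treatment of the $k=0$ term via $\|\varphi^{-1}v_{\alpha'}\|_{0,0}\le 2\|v_{\alpha'}\|_{0,0}+\|\alpha' u_{\alpha'}\|_{0,0}$ — arriving at $\partial_{z_1}Gen_0(u)\,\partial_{z_2}Gen_\delta(g)+C(Gen_0(v)+\partial_{z_1}Gen_0(u))\,\partial_{z_2}^2Gen_\delta(g)$. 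By contrast, your use of the coefficient-wise principle for $\partial_{z_2}Gen_\delta(u\partial_x g)$ is unproblematic, since the proof of Proposition \ref{prop-Gen} (hence of \eqref{bd-udxg}) is coefficient-wise once the exponentials are expanded. So: right skeleton and right crux, but the one-line justification you lean on for the hardest term needs to be replaced by the explicit coefficient-level computation (or by the precursor-differentiation argument just described).
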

Note that all the terms in ${\cal A}$ are non negative, since all the derivatives of generator functions are non negative.
\begin{proof}
Let us successively bound all the terms appearing in ${\cal A} (u \partial_x g + v \partial_y g)$. First, $Gen_\delta(u \partial_x g + v \partial_y g)$
has been bounded in \eqref{bd-udxg} and in the previous proposition. Next we compute 
\beq \label{Gendx}
\begin{aligned}
\partial_{z_1} Gen_\delta (u \partial_x g) 
&= Gen_\delta(\partial_x (u \partial_x g)) 
= Gen_\delta(\partial_x u \partial_x g + u \partial_x^2 g) 
\\&\le \partial_{z_1} Gen_0(u) \partial_{z_1} Gen_\delta(g) 
+ Gen_0(u) \partial_{z_1}^2 Gen_\delta(g).
\end{aligned}\eeq
Moreover, using Proposition \ref{prop-Gendy}, 
$$
\begin{aligned}
\partial_{z_1} Gen_\delta (v \partial_y g) 
&= Gen_\delta( \partial_x( v \partial_y g))
= Gen_\delta( \partial_x v \partial_y g + v \partial_y \partial_x g) 
\\
&\le C (\partial_{z_1} Gen_0(v) + \partial_{z_1}^2 Gen_0(u)) \partial_{z_2} Gen_\delta(g)
\\&\quad+ C (Gen_0(v) + \partial_{z_1} Gen_0(u)) \partial_{z_2} \partial_{z_1} Gen_\delta(g) .
\end{aligned}$$
Let us now bound the term $\partial_{z_2} Gen_\delta(v \partial_y g)$.
Precisely, we have to bound 
$$
\begin{aligned}
&{z_2^n \over n !} \| \varphi^{n+1} \partial_y^{n+1} (v_{\alpha'} \partial_y g_{\alpha - \alpha'})  \|_{0,\delta}
= {z_2^n \over n !} \| \varphi^{n+1} 
 \partial_y^n (\partial_y v_{\alpha'} \partial_y g_{\alpha - \alpha'}
 + v_{\alpha'} \partial_y^2 g_{\alpha - \alpha'})   \|_{0,\delta} 
\\&\le \sum_{0 \le k \le n} {z_2^n \over k ! (n-k) !}
\| \varphi^{n+1} \partial_y^{k+1} v_{\alpha'} \partial_y^{n+1-k} g_{\alpha - \alpha'} 
+ \varphi^{n+1} \partial_y^k v_{\alpha'} \partial_y^{n+2-k} g_{\alpha - \alpha'} \|_{0,\delta}.
\end{aligned}$$
Let us split this sum in two. The first sum equals, using the divergence free condition,
$$
\begin{aligned}
&\sum_{0 \le k \le n} {z_2^n \over k ! (n-k) !} \| \varphi^k \partial_y^k \partial_x u_{\alpha'} \, \,
\varphi^{n+1-k} \partial_y^{n+1-k} g_{\alpha - \alpha'}  \|_{0,\delta}
\\&\le \sum_{0 \le k \le n} {z_2^k \over k ! } \| \varphi^k \partial_y^k \partial_x u_{\alpha'} \|_{0,0}
{z_2^{n-k} \over (n-k)! } \|\varphi^{n+1-k} \partial_y^{n+1-k} g_{\alpha - \alpha'}  \|_{0,\delta}.
\end{aligned}$$
Multiplying by $e^{| \alpha | z_1}$ and summing over $\alpha$ and $\alpha'$, the sum  is bounded by
$$
Gen_0(\partial_x u) \partial_{z_2} Gen_\delta(g) = \partial_{z_1} Gen_0(u) \partial_{z_2} Gen_\delta(g).
$$
On the other hand, the second sum equals to 
\begin{equation}\label{sum-0}
\sum_{0 \le k \le n} {z_2^n \over k ! (n-k) !} \| \varphi^{n+1} \partial_y^k v_{\alpha'} \, \,
 \partial_y^{n+2-k} g_{\alpha - \alpha'}  \|_{0,\delta}.
\end{equation}
We follow the proof of the previous Proposition. First, for $k > 0$, this sum equals to
$$
\sum_{1 \le k \le n} {z_2^n \over k ! (n-k) !} \| \varphi^{k-1} \partial_y^{k-1} \partial_x u_{\alpha'} \, \,
 \varphi^{n+2 - k}\partial_y^{n+2-k} g_{\alpha - \alpha'}  \|_{0,\delta}.
$$
Multiplying by $e^{| \alpha | z_1}$, the corresponding sum is bounded by
$$
\partial_{z_1} Gen_0(u) \partial_{z_2}^2 Gen_\delta(g),
$$
provided that $| z_2 | \le 1$. It remains to bound the term $k=0$ in \eqref{sum-0}:
$$
{z_2^n \over n !} \| \varphi^{n+1}  v_{\alpha'} \partial_y^{n+2} g_{\alpha - \alpha'}  \|_{0,\delta}
\le \Bigl( 2 \| v_{\alpha'} \|_{0,0} + \| \alpha' u_{\alpha'} \|_{0,0} \Bigr) 
{z_2^n \over n!} \| \varphi^{n+2} \partial_y^{n+2} g_{\alpha - \alpha'} \|_{0,\delta}.
 $$
Multiplying by $e^{| \alpha | z_1}$, the corresponding sum is bounded by
$$
(Gen_0(v) + \partial_{z_1} Gen_0(u) ) \partial_{z_2}^2 Gen_\delta(g) .
$$
This leads to 
$$
\begin{aligned}
\partial_{z_2} Gen_\delta (v \partial_y g) &\le \partial_{z_1} Gen_0(u) \partial_{z_2} Gen_\delta(g) 
\\&\quad + C_0 (Gen_0(v) + \partial_{z_1} Gen_0(u)) \partial_{z_2}^2 Gen_\delta(g). 
\end{aligned}$$
The bound on $\partial_{z_2} Gen_\delta (u \partial_x g)$ is similar
which ends the proof of this Proposition.
\end{proof}


\section{Laplace equations}


In this section we focus on the classical Laplace equation which is much easier than Orr Sommerfeld equations.
We will apply the same arguments on Orr Sommerfeld in the next section.


\subsection{In one space dimension}


We consider the classical one-dimensional Laplace equation
\beq \label{Lap1}
\Delta_\alpha \phi = \partial_y^2 \phi - \alpha^2 \phi = f
\eeq
on the half line $y \ge 0$, with  Dirichlet boundary condition $ \phi(0) = 0$ and
$\lim_{y \to + \infty} \phi(y) = 0$. We recall that $\|f \|_{0,0} = \sup_{y \ge 0}|f(y) |$. 
Let us first recall the following classical result:

\begin{prop}\label{proplaplace1} ($L^\infty$ bounds). \\
Let $\phi$ solve the one-dimensional Laplacian problem \eqref{Lap1}, with Dirichlet boundary condition. There holds
\beq \label{Lap3}
\alpha^2 \| \phi \|_{0,0} + | \alpha | \,  \| \partial_y \phi \|_{0,0}
+ \| \partial_y^2 \phi \|_{0,0}  \le C \| f \|_{0,0},
\eeq
where the constant $C$ is independent of the integer $\alpha \ne 0$.
\end{prop}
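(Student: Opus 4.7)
\medskip

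My plan is to argue via the explicit Green's function for the one-dimensional operator $\Delta_\alpha = \partial_y^2 - \alpha^2$ on the half line with Dirichlet condition at $y=0$ and decay at infinity. Using the method of images one verifies that
\[
G_\alpha(y,z) = -\frac{1}{2|\alpha|}\Bigl( e^{-|\alpha||y-z|} - e^{-|\alpha|(y+z)} \Bigr)
\]
solves $(\partial_y^2 - \alpha^2) G_\alpha(\cdot,z) = \delta_z$ with $G_\alpha(0,z) = 0$ and $G_\alpha(y,z)\to 0$ as $y\to\infty$. The unique decaying solution of \eqref{Lap1} with Dirichlet condition is then $\phi(y) = \int_0^\infty G_\alpha(y,z) f(z)\,dz$, and the bounds reduce to elementary integral estimates.

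For the first bound, I pull $\|f\|_{0,0}$ outside and compute
\[
|\phi(y)| \le \frac{\|f\|_{0,0}}{2|\alpha|}\int_0^\infty \Bigl(e^{-|\alpha||y-z|} + e^{-|\alpha|(y+z)}\Bigr)\,dz \le \frac{C}{\alpha^2}\|f\|_{0,0},
\]
since each of the two exponential integrals contributes $O(1/|\alpha|)$ uniformly in $y\ge 0$. For the first derivative I differentiate under the integral sign; since $\partial_y G_\alpha$ picks up a factor $|\alpha|$ in front of the exponentials, the same computation gives $|\partial_y \phi(y)| \le \frac{C}{|\alpha|}\|f\|_{0,0}$ (the jump at $y=z$ is integrable and contributes only a bounded constant).

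Finally, for the second derivative I avoid differentiating the kernel a second time and instead use the equation itself: $\partial_y^2 \phi = f + \alpha^2 \phi$, so
\[
\|\partial_y^2 \phi\|_{0,0} \le \|f\|_{0,0} + \alpha^2\|\phi\|_{0,0} \le C\|f\|_{0,0}
\]
by the first estimate. Collecting the three bounds yields \eqref{Lap3}. I do not expect any real obstacle here: the only thing to be a bit careful about is uniformity of $C$ in $\alpha\ne 0$, which is built into the factor $1/|\alpha|$ arising from each $z$-integration of $e^{-|\alpha||y-z|}$ and $e^{-|\alpha|(y+z)}$, exactly cancelling the prefactor $1/|\alpha|$ in $G_\alpha$ and producing the stated weights $\alpha^2$, $|\alpha|$, $1$ on $\phi$, $\partial_y\phi$, $\partial_y^2\phi$ respectively. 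An alternative route would be to use explicit barriers like $\psi(y) = \frac{\|f\|_{0,0}}{\alpha^2}(1 - e^{-|\alpha|y})$ together with the maximum principle, but the Green's function approach is more direct and yields all three estimates in a uniform fashion.
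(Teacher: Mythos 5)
Your proof is correct and follows essentially the same route as the paper: the explicit image Green's function, a direct kernel bound giving the $\alpha^{-2}$ estimate for $\phi$, differentiation of the kernel (splitting at $x=y$) for $\partial_y\phi$, and the equation itself for $\partial_y^2\phi$. No issues.
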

Note that (\ref{Lap3}) states that we gain two derivatives by inverting the Laplace operator: a control on the maximum of $f$ gives
a control on the maximum of the first two derivatives of $\phi$. 
\begin{proof} 
We will only consider the case $\alpha > 0$, the opposite case being similar.
The Green function of $\partial_y^2 - \alpha^2$ is
$$
G(x,y) =  -{1 \over 2\alpha } \Bigl( e^{- \alpha | x-y |} - e^{- \alpha | x+y |} \Bigr) 
$$
and its absolute value is bounded by $\alpha^{-1} e^{-\alpha|x-y|}$. The solution $\phi$ of (\ref{Lap1}) is explicitly given by
\begin{equation}\label{laplacephi1}
\phi(y) =\int_0^\infty G(x,y) f(x) dx .
\end{equation}
A direct bound leads to
$$
|\phi(y)|\le \alpha^{-1}\|f\|_{0,0}\int_0^\infty e^{-\alpha |x-y|}\; dx \le  C \alpha^{-2} \| f \|_{0,0}
$$
in which the extra $\alpha^{-1}$ factor  is due to the $x$-integration. 
Splitting the integral formula (\ref{laplacephi1}) in $x < y$ and $x > y$ and differentiating it, we get
$$
 \|Ê\partial_y \phi \|_{0,0} \le C \alpha^{-1} \| f \|_{0,0} .
$$
We then use the equation to bound $\partial_y^2 \phi$, which ends the proof of (\ref{Lap3}).
\end{proof}

Next, in the case when $f$ has a boundary layer behavior, we obtain the following result:
 
\begin{prop} \label{proplaplace3} (Boundary layers norms)\\
Let $\phi$ solve the one-dimensional Laplacian problem \eqref{Lap1} with Dirichlet boundary condition. 
Provided $| \delta \alpha^2 | \le 1$, there hold
\beq \label{Lap4}
 \| \nabla_\alpha \phi \|_{0,0}   \le C \| f \|_{0,\delta}
\eeq
and
\beq \label{Lap4bis}
| \alpha |^2 \, \| \phi \|_{0,0} 
+  \| \partial_y^2 \phi \|_{0,\delta}   \le C \| f \|_{0,\delta}
\eeq
where the constant $C$ is independent of the integer $\alpha$.
\end{prop}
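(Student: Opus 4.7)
The plan is to represent $\phi$ by the Dirichlet Green's function of $\partial_y^2 - \alpha^2$ on the half-line and to exploit the boundary vanishing of this kernel to convert the $\delta^{-1}$ spike in $f$ into an $\alpha^{-2}$ bound on $\phi$. Explicitly,
\[
\phi(y) = \int_0^\infty G(x,y)\,f(x)\,dx, \qquad G(x,y) = -\frac{1}{\alpha}\sinh(\alpha x_<)\,e^{-\alpha x_>},
\]
with $x_< = \min(x,y)$ and $x_>=\max(x,y)$. I will use the two pointwise bounds
\[
|G(x,y)| \le \min(x_<,\alpha^{-1})\, e^{-\alpha|x-y|}, \qquad |\partial_y G(x,y)| \le e^{-\alpha|x-y|}.
\]
From the definition of $\|f\|_{0,\delta}$, I write $|f(x)| \le \|f\|_{0,\delta}\bigl(\delta^{-1}e^{-x/\delta}+1\bigr)$ and split the Green integral into a ``bulk'' piece and a ``boundary-layer'' piece accordingly.

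The bulk contribution is handled exactly as in Proposition \ref{proplaplace1}: using $|G|\le \alpha^{-1} e^{-\alpha|x-y|}$ gives a factor $2\alpha^{-2}$, and using $|\partial_y G|\le e^{-\alpha|x-y|}$ gives a factor $2\alpha^{-1}$. For the boundary-layer piece I split the $x$-integration according to where $x_<$ lies relative to $\alpha^{-1}$: when $x_<\le\alpha^{-1}$ I use $|G|\le x_<\,e^{-\alpha|x-y|}$, otherwise $|G|\le \alpha^{-1} e^{-\alpha|x-y|}$. In the small-$x$ sub-range ($x\le\min(y,\alpha^{-1})$), dropping the factor $e^{-\alpha(y-x)}\le 1$ and using the identity $\delta^{-1}\!\int_0^\infty x\, e^{-x/\delta}dx=\delta$ produces a contribution bounded by $C\delta$, which is $\le C\alpha^{-2}$ precisely by the hypothesis $\delta\alpha^2\le 1$. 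The sub-range $x>y$ with $y\le\alpha^{-1}$ gives $Cy\,e^{-y/\delta}\le C\delta/e$ by direct estimation. The remaining two sub-ranges (middle range $\alpha^{-1}<x\le y$, and the case $x>y$ with $y>\alpha^{-1}$) produce factors of the form $\alpha^{-1} e^{-1/(\alpha\delta)}$; since the hypothesis forces $(\alpha\delta)^{-1}\ge\alpha$, these are bounded by $\alpha^{-1} e^{-\alpha}$, which is $\le C\alpha^{-2}$ for all integers $|\alpha|\ge 1$. Summing the four cases yields $\|\phi\|_{0,0}\le C\alpha^{-2}\|f\|_{0,\delta}$. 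The same four-range analysis applied to $\partial_y G$, but now with the simpler bound $|\partial_y G|\le e^{-\alpha|x-y|}$, gives $\|\partial_y\phi\|_{0,0}\le C\|f\|_{0,\delta}$; combined with $|\alpha|\,\|\phi\|_{0,0}\le C\alpha^{-1}\|f\|_{0,\delta}$ this establishes \eqref{Lap4}.

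For \eqref{Lap4bis}, the bound $\alpha^2\|\phi\|_{0,0}\le C\|f\|_{0,\delta}$ is the $L^\infty$ estimate just obtained. The bound on $\partial_y^2\phi$ in the $\|\cdot\|_{0,\delta}$ norm follows directly from the equation by writing $\partial_y^2\phi = f+\alpha^2\phi$ and applying $\|\cdot\|_{0,\delta}\le \|\cdot\|_{0,0}$ to the second term.

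The main obstacle will be obtaining the sharp $\alpha^{-2}$ factor in the boundary-layer contribution: a naive use of $|G|\le\alpha^{-1}e^{-\alpha|x-y|}$ only yields $\alpha^{-1}$, and one must exploit the vanishing of $G$ at $x=0$ (the $\sinh(\alpha x_<)$ factor) to extract the missing power. The hypothesis $\delta\alpha^2\le 1$ is used sharply in two different ways: first to convert the contribution $\delta$ into $\alpha^{-2}$, and second to convert the tail $e^{-1/(\alpha\delta)}$ into $e^{-\alpha}\lesssim \alpha^{-2}$.
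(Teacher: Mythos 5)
Your proof is correct and follows essentially the same route as the paper: both represent $\phi$ through the Dirichlet Green function and exploit its vanishing at $x=0$ (the bound $|G(x,y)|\le \min(|x|,\alpha^{-1}e^{-\alpha|x-y|})$) to convert the $\delta^{-1}e^{-x/\delta}$ spike in $f$ into a contribution of size $\delta\le\alpha^{-2}$, then recover $\partial_y^2\phi$ directly from the equation. The paper's version is merely more compact, integrating that single pointwise bound against the full weight rather than performing your four-range case analysis, but the mechanism is identical.
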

Note that in the case of boundary layer norms, we only gain "one" derivative in supremum norm, but the usual two derivatives
in boundary layer norm.
\begin{proof}
Using (\ref{laplacephi1}), we estimate
$$
| \phi (y) | \le \alpha^{-1} \| f \|_{0,\delta} \int_0^\infty
e^{- \alpha |   y -   x |} 
\Bigl( 1 + \delta^{-1} e^{-x/\delta}\Bigr) d  x
$$
$$
\le  \alpha^{-1} \| f \|_{0,\delta} 
\Bigl( \alpha^{-1} + \delta^{-1} \int_0^\infty e^{-x/\delta}d   x \Bigr) 
$$
which yields the claimed bound for $\alpha \phi$. The bound on $\partial_y \phi$ is obtained by differentiating  (\ref{laplacephi1}).

Let us turn to (\ref{Lap4bis}). Note that  $| \partial_x G(x,y) |Ê\le 1$. As
$G(0,y) = 0$ this gives $|G(x,y)| \le | x |$. 
Therefore
$$
| G(x,y) | \le \min(\alpha^{-1}e^{-\alpha | x - y|}, | x|) ,
$$
and hence
$$
| \phi(y) | \le \| f \|_{0,\delta} \int_0^\infty \min(| x | , \alpha^{-1} e^{- \alpha  |x - y|} ) \Bigl( \delta^{-1} e^{- x / \delta} + 1 \Bigr) dx
$$
$$
\le  C \| f \|_{0, \delta} \Bigl(\delta +  \alpha^{-2} \Bigr)
$$
which gives the desired bound when $| \delta \alpha^2 | \le 1$. We then use the equation to get the bound on 
$\| \partial^2_y \phi \|_{0,\delta}$.
\end{proof}



\subsection{Laplace equation and generator functions}


In this section, we will study the generator functions, introduced in Section \ref{sec-Gen}, of solutions to the Laplace equation 
$\Delta\phi = \omega$. In the sequel, it is important to keep in mind that, in the application to Prandtl boundary layer stability,
$\omega$ will  have a boundary layer behavior, namely will behave like
$\delta^{-1} e^{- C y / \delta}$, whereas the stream function $\phi$ will be bounded in the limit. 

\begin{proposition}\label{prop-elliptic}  Let $\phi_\alpha = \Delta_\alpha^{-1}\omega_\alpha$ on $\RR_+$ 
with the Dirichlet boundary condition ${\phi_\alpha}_{\vert_{y=0}} = 0$. For $| \delta \alpha^2 | \le 1$,
 there are positive constants $C_0,\theta_0$ so that 
\beq \label{prop-e}
Gen_{\delta,\alpha}(\nabla_\alpha^2 \phi_\alpha) 
+ Gen_{0,\alpha}(\nabla\phi_\alpha) \le C_0 Gen_{\delta,\alpha}(\omega_\alpha),
\eeq
for all $z_2$ so that $| z_2 | \le \theta_0$.  

Moreover if $\phi = \Delta^{-1} \omega$ and if $\omega_\alpha = 0$ for all $\alpha$ such that $| \delta \alpha^2 | \ge 1$, then
\beq \label{prop-e1}
Gen_0(\nabla \phi) \le C Gen_\delta(\omega),
\eeq
\beq \label{prop-e2}
\partial_{z_1} Gen_0(\nabla \phi) \le C \partial_{z_1} Gen_\delta(\omega),
\eeq
\beq \label{prop-e3}
\partial_{z_2} Gen_0(\nabla \phi) \le C \partial_{z_2} Gen_\delta(\omega) + Gen_\delta(\omega).
\eeq
\end{proposition}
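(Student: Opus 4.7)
The plan is to reduce every generator-function estimate to the pointwise one-dimensional Laplace bounds of Propositions~\ref{proplaplace1} and \ref{proplaplace3}, applied Fourier mode by Fourier mode and derivative by derivative. The structural observation is that for every $\ell \ge 0$ the function $\partial_y^\ell \phi_\alpha$ again solves a Laplace equation
$$(\partial_y^2 - \alpha^2) \partial_y^\ell \phi_\alpha = \partial_y^\ell \omega_\alpha$$
on $\RR_+$, and the algebraic identity $\partial_y^{\ell+2}\phi_\alpha = \alpha^2 \partial_y^\ell \phi_\alpha + \partial_y^\ell \omega_\alpha$ closes the recursion on the values at $y=0$. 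Using this, I would show inductively that $\|\partial_y^{\ell+2}\phi_\alpha\|_{\ell+2,\delta}$ and $\|\partial_y^{\ell+1}\phi_\alpha\|_{\ell+1,0}$ are bounded by $C\|\partial_y^\ell\omega_\alpha\|_{\ell,\delta}$ uniformly in $\ell$, exploiting the monotonicity $\varphi^{\ell+2}\le\varphi^\ell$ from \eqref{alg} and the fact that the Green function $G(x,y)=-(2\alpha)^{-1}(e^{-\alpha|x-y|}-e^{-\alpha|x+y|})$ is compatible with the boundary-layer weight $(\delta^{-1}e^{-y/\delta}+1)$ in the sense made precise in Proposition~\ref{proplaplace3}. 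Multiplying by $z_2^\ell/\ell!$ and summing, the factorial shift between $\ell$ and $\ell+2$ produces a factor $z_2^2/((\ell+1)(\ell+2))$ which absorbs all constants once $|z_2|\le\theta_0$, yielding \eqref{prop-e}.

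For the three global bounds the hypothesis $\omega_\alpha = 0$ for $|\delta\alpha^2|\ge 1$ guarantees that every Fourier mode satisfies the assumption of the per-frequency estimate. Multiplying \eqref{prop-e} by $e^{z_1|\alpha|}$ and summing over $\alpha$ gives \eqref{prop-e1} directly. For \eqref{prop-e2}, I would apply \eqref{prop-e1} to $\partial_x\phi$, which solves $\Delta(\partial_x\phi)=\partial_x\omega$ with the same boundary condition and the same frequency cutoff, and invoke the commutation $Gen_\delta(\partial_x\cdot)=\partial_{z_1} Gen_\delta(\cdot)$ from Proposition~\ref{prop-Gen}.

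The delicate estimate is \eqref{prop-e3}, where differentiating the generator function in $z_2$ morally inserts an extra $\varphi\partial_y$, but $\varphi\partial_y$ does not commute with $\Delta^{-1}$. I would estimate $\partial_{z_2}Gen_0(\nabla\phi)$ from its definition: expand $\partial_y^{\ell+1}\phi_\alpha$ via the iterated equation, apply Proposition~\ref{proplaplace3} to the resulting $\partial_y^\ell\omega_\alpha$, and track the contribution from the factor $\varphi^{\ell+1}$ separately, using $\varphi\partial_y(\varphi^\ell\,\cdot)=\varphi^{\ell+1}\partial_y+\ell\varphi^\ell\varphi'$ together with the boundedness of $\varphi'$. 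The leftover zeroth-order piece produced by this commutator is exactly the origin of the lossy $Gen_\delta(\omega)$ term on the right-hand side of \eqref{prop-e3}, and reflects the mismatch between $\phi$ (which is smooth up to $y=0$) and $\omega$ (which carries a sharp $\delta$-layer). Controlling this term cleanly, without losing a factor of $\delta^{-1}$, is the main technical obstacle; once it is carried out, summing against the factorial weights $z_2^\ell/\ell!$ for $|z_2|\le\theta_0$ is routine.
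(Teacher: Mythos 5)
There is a genuine gap at the core step. Your plan applies the one-dimensional Laplace estimates to $\partial_y^\ell\phi_\alpha$ directly, but Proposition~\ref{proplaplace3} requires the homogeneous Dirichlet condition, and $\partial_y^\ell\phi_\alpha(0)\neq 0$ for $\ell\ge 1$. The proposed fix — closing a recursion on the boundary values through $\partial_y^{\ell+2}\phi_\alpha(0)=\alpha^2\partial_y^\ell\phi_\alpha(0)+\partial_y^\ell\omega_\alpha(0)$ — cannot work with the norms in play: the weight $\varphi^\ell$ vanishes at $y=0$, so $\|\partial_y^\ell\omega_\alpha\|_{\ell,\delta}$ gives no control whatsoever on $\partial_y^\ell\omega_\alpha(0)$. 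For a boundary-layer vorticity $\omega_\alpha\sim\delta^{-1}e^{-y/\delta}$ one has $\partial_y^\ell\omega_\alpha(0)\sim\delta^{-\ell-1}$ while $\|\partial_y^\ell\omega_\alpha\|_{\ell,\delta}=O(1)$, so the boundary correction $\partial_y^\ell\phi_\alpha(0)e^{-|\alpha|y}$ that your representation forces you to estimate separately is not bounded by the right-hand side (the true solution is fine only because of cancellations your splitting destroys). Moreover, the inductive claim you build on — $\|\partial_y^{\ell+2}\phi_\alpha\|_{\ell+2,\delta}+\|\partial_y^{\ell+1}\phi_\alpha\|_{\ell+1,0}\le C\|\partial_y^\ell\omega_\alpha\|_{\ell,\delta}$ with $C$ uniform in $\ell$ — is false: take $\omega_\alpha=e^{-y}$ and $|\alpha|$ large with $|\delta\alpha^2|\le 1$, so that $\phi_\alpha=(e^{-|\alpha|y}-e^{-y})/(\alpha^2-1)$; then the left-hand side grows in $\ell$ like $|\alpha|^{\ell}$ (equivalently like $\ell!$ for $\ell\lesssim|\alpha|$), while the right-hand side stays $O(1)$. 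Since the per-$\ell$ constants are unbounded, the subsequent "absorb all constants by the factorial shift" step has nothing to absorb into; only the summed generator functions are controlled, not each term.

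The paper's proof avoids both problems by conjugating with the weight first: $\varphi^n\partial_y^n\phi_\alpha$ vanishes at $y=0$, satisfies $\Delta_\alpha(\varphi^n\partial_y^n\phi_\alpha)=\varphi^n\partial_y^n\omega_\alpha+$ commutator terms, and Proposition~\ref{proplaplace3} then applies legitimately. The price is a \emph{recursive} inequality $A_n\le C(\|\varphi^n\partial_y^n\omega_\alpha\|_{0,\delta}+nA_{n-1}+n(n-1)A_{n-2})$ rather than a per-$\ell$ bound, and the absorption happens at the level of the series $\sum_n A_n z_2^n/n!$ for small $z_2$. Your treatment of \eqref{prop-e2} via $\partial_x\phi$ is fine and matches the paper's "multiply by $|\alpha|$ before summing", but your account of \eqref{prop-e3} remains a description of the difficulty rather than an argument; in the paper the extra $Gen_\delta(\omega)$ term arises simply as the unabsorbed $A_0$ contribution when the same recursion is summed against $z_2^{n-1}/(n-1)!$, not from a commutator of $\varphi\partial_y$ with $\Delta^{-1}$. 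To repair your proof you should replace the per-$\ell$ induction by the weighted-conjugation argument (or otherwise prove a recursive, not termwise, estimate) before summing.
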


\begin{proof} 
For $n\ge 1$, from the elliptic equation $\Delta_\alpha \phi_\alpha = \omega_\alpha$, we compute  
$$
 \Delta_\alpha ( \varphi^n \partial_y^n \phi_\alpha)  
= \varphi^n \partial_y^n \omega_\alpha + 2 \partial_y (\varphi^n) \partial_y^{n+1} \phi_\alpha
 + \partial_y^2 (\varphi^n) \partial_y^n \phi_\alpha 
.$$
Note that
$$
\partial_y (\varphi^n) \partial_y^{n+1} \phi_\alpha
= n \varphi'  \varphi^{n-1}  \partial_y^{n+1} \phi_\alpha
,$$
and hence the $\| . \|_{0,\delta}$ norm of this term is bounded by $n \| \varphi^{n-1} \partial_y^{n+1} \phi_\alpha \|_{0,\delta}$.
Moreover, 
$$
 \partial_y^2 (\varphi^n) \partial_y^n \phi_\alpha = 
 \Bigl( n (n-1) \varphi'^2 \varphi^{n-2}  + n \varphi'' \varphi^{n-1} \Bigr) \partial_y^n \phi_\alpha
 $$
whose $\| . \|_{0,\delta}$ norm is bounded by $n (n-1) \| \varphi^{n-2} \partial_y^{n} \phi_\alpha \|_{0,\delta}$.
 Using Proposition \ref{proplaplace3}, we get
 $$
\begin{aligned}
&|\alpha|^2 \| \varphi^n \partial_y^n \phi_\alpha\|_{0,0} +  \| \partial_y^2 (\varphi^n \partial_y^n \phi_\alpha) \|_{0,\delta}  
 + \| \nabla_\alpha (\varphi^n \partial_y^n \phi_\alpha)  \|_{0,0}
\\& \le C \| \varphi^n \partial_y^n \omega_\alpha \|_{0,\delta} 
 + C n \| \varphi^{n-1} \partial_y^{n+1} \phi_\alpha \|_{0,\delta}
 + C n (n-1) \| \varphi^{n-2} \partial_y^{n} \phi_\alpha \|_{0,\delta}.
\end{aligned} $$
 Expanding the left hand side, we get
 \beq \label{expand}
\begin{aligned}
&
|\alpha|^2 \|  \varphi^n \partial_y^n \phi_\alpha\|_{0,0} +  \| \varphi^n \partial_y^{n+2} \phi_\alpha \|_{0,\delta}  
 + \| \varphi^n \partial_y^n \nabla_\alpha \phi_\alpha  \|_{0,0}
\\& \le C_0 \| \varphi^n \partial_y^n \omega_\alpha \|_{0,\delta} 
 + C_0n \| \varphi^{n-1} \partial_y^{n+1} \phi_\alpha \|_{0,\delta}
 \\& + C_0n (n-1) \| \varphi^{n-2} \partial_y^{n} \phi_\alpha \|_{0,\delta} 
 + C_0 n \| \varphi^{n-1} \partial_y^n \phi \|_{0,0}.
\end{aligned} 
\eeq
 Let
 $$
 A_n = 
 |\alpha|^2 \|  \varphi^n \partial_y^n \phi_\alpha\|_{0,0} +  \| \varphi^n \partial_y^{n+2} \phi_\alpha \|_{0,\delta}  
 + \| \varphi^n \partial_y^n \nabla_\alpha \phi_\alpha  \|_{0,0}
. $$
Multiplying by $z_2^n / n!$ and summing over $n$, we get
$$
\begin{aligned}\sum_{n \ge 0}ÊA_n {z_2^n \over n ! }
&\le C_0\sum_{n \ge 0}Ê\| \varphi^n \partial_y^{n} \omega_\alpha \|_{0,\delta} {z_2^n \over n ! }
+ C_0 \sum_{n \ge 1}  A_{n-1} {z_2^n \over (n-1) ! }
\\&\quad + C_0 \sum_{n \ge 2}  A_{n-2} {z_2^n \over (n-2) ! }
\end{aligned}$$
which ends the proof of (\ref{prop-e}), provided $| z_2 |$ is small enough.

Next (\ref{prop-e1}) is a direct consequence of (\ref{prop-e}), just summing in $\alpha$. 
If we multiply (\ref{prop-e}) by $| \alpha |$ before summing it, this gives (\ref{prop-e2}).
Now we multiply (\ref{expand}) by $z_2^{n-1} / (n-1)!$ instead of $z_2^n / n!$. This gives
$$
\begin{aligned}\sum_{n \ge 1}ÊA_n {z_2^{n-1} \over (n-1) ! }
&\le C_0\sum_{n \ge 1}Ê\| \varphi^n \partial_y^{n} \omega_\alpha \|_{0,\delta} {z_2^{n-1} \over (n-1) ! }
+ C_0 \sum_{n \ge 1}  A_{n-1} n {z_2^{n-1} \over (n-1) ! }
\\&\quad + C_0 \sum_{n \ge 2}  n (n-1) A_{n-2} {z_2^{n-1} \over (n-1) ! }.
\end{aligned}$$
The terms in the right hand side may be absorbed by the left hand side provided $z_2$ is small enough, 
except
$C_0 A_0$, which is bounded by $Gen_{\delta,\alpha} (\omega_\alpha)$. This ends the proof of the Proposition.
\end{proof}


\section{Orr-Sommerfeld equations}



\subsection{Introduction}


In this section, we study the Orr-Sommerfeld equations 
\begin{equation}\label{OS1} 
Orr_{\alpha,c}(\phi) :=  - \epsilon \Delta_\alpha^2\phi + (U-c) \Delta_\alpha \phi - U'' \phi = f,
\end{equation}
together with the boundary conditions 
\begin{equation}\label{OS2} \phi_{\vert_{y=0}} = 0, \qquad \partial_y \phi_{\vert_{y=0}} =0,\end{equation}
and $\phi \to 0$ as $y \to + \infty$,
with $\Delta_\alpha = \partial_y^2 - \alpha^2$. We shall focus on the case when $\alpha\not =0$; the $\alpha=0$ case will be treated in Section \ref{sec-azero}. 
 The Orr-Sommerfeld problem is the resolvent problem of the linearized Navier-Stokes equations around
 a shear profile $U$, 
 written in terms of the stream function $\phi$. 
We shall study the generator functions of Orr-Sommerfeld solutions.

Throughout this paper, $| \Im c|$ will always be larger than $3\Re \lambda_0 /2$, 
where $\lambda_0$ is the speed of growth of the linear instability. In particular,
$\Im c$ will be bounded away from $0$. Moreover we will restrict ourselves to $\alpha \ll \epsilon^{-1/3}$, or precisely
\begin{equation}\label{range-a}
| \eps \alpha^3\log \nu | \le 1.
\end{equation}
Let us first describe Orr Sommerfeld equations in an informal way.
For small $\epsilon$, Orr Sommerfeld equations are a viscous perturbation of Rayleigh equations
\begin{equation}\label{def-reRay1} 
Ray_{\alpha,c}(\phi) := (U-c) \Delta_\alpha \phi - U'' \phi = f,
\end{equation}
with boundary conditions $\phi(0) = 0$ and $\lim_{y \to + \infty} \phi(y) = 0$.
Note that the equation $Ray_{\alpha,c}(\phi) = 0$ may be rewritten as
$$
(\partial_y^2 - \alpha^2) \phi - {U'' \over U - c} \phi = 0.
$$
As $y \to + \infty$, it therefore simplifies into $\partial_y^2 \phi - \alpha^2 \phi = 0$. Hence this equation
has two independent solutions $\phi_{s,\pm}$, with respective asymptotic behaviors
$e^{\pm |\alpha| y}$. Note that $c$ is an eigenvalue if and only if $\phi_{s,-}(0) = 0$.
We have to bound these solutions uniformly in $| \alpha | \ge 1$ and $c$, with $| \Im c| \ge 3\Re \lambda_0 / 2$.
As $\alpha$ goes to $\infty$, $\phi_{s,\pm}$ converge to $e^{\pm | \alpha | y}$. Moreover,
$ U'' / (U-c)$ is bounded since $|\Im c| \ge 3 \Re \lambda_0/2$ is bounded away from $0$, therefore this term may be
handled as a regular perturbation. 

When we add the viscous term $\epsilon \Delta_\alpha^2\phi$, these two solutions are slightly perturbed, but give birth to two independent
solutions of Orr Sommerfeld with a "slow" behavior, which behave like $e^{\pm | \alpha | y}$. 
Two additional solutions, called $ \phi_{f,\pm}$, appear, with a fast behavior.
For these solutions the viscous term is no longer negligible and is of the same order as the Rayleigh one. At leading order,
$ \phi_{f,\pm}$ are solutions to
\beq \label{AiryA}
- \eps \partial_y^4 \phi + (U-c + \alpha^2 \epsilon ) \partial_y^2 \phi = 0.
\eeq
Let
$$
\mu_f(y) =  \sqrt{\alpha^2 + \frac{U-c}{\epsilon}},
$$
taking the positive real part.
Then at first order  $\phi_{f,\pm}$ behaves like $e^{\pm \int_0^y \mu_f(z) dz}$.

%
%
%
%
Let $G_{\alpha,c}(x,y)$ be the Green function of the Orr-Sommerfeld problem. This Green function may be decomposed in
a "slow part" $G_s$ and a "fast part" $G_f$, such that
$$
G_{\alpha,c}(x,y) = G_s(x,y) + G_f(x,y) .
$$
We recall the following theorem, which is the main result of \cite[Theorem 2.1]{GrN1}.

\begin{theorem}\label{theo-GreenOS} 
Let $\alpha, c$ be arbitrary, so that  $| \Im c |$ is bounded away from $0$ and $| \alpha \Im c| > 3 \Re \lambda_0 / 2$.
Then, there are universal positive constants $C_0, \theta_0$ so that 
\begin{equation}\label{est-GrOS1}
 |G_s(x,y)|  \le  \frac{C_0}{\mu_s (1 + |\Im c|)}  \Big( e^{-\theta_0\mu_s |x-y|}  +  e^{-\theta_0 \mu_s |x+y|} \Big) 
 \eeq
 \beq \label{est-GrOS1b}
 | G_f(x,y) | \le  \frac{C_0}{m_f (1 + |\Im c|) } \Big( e^{- \theta_0 m_f|x-y|}  + e^{- \theta_0 m_f|x+y|} \Big) 
\end{equation}
for all $x,y\ge 0$, in which 
 \begin{equation}\label{def-mMf}
\mu_s = |\alpha|, \qquad  m_f = \inf_{y} \Re  \mu_f (y) , 
\end{equation}
with 
$$
\mu_f(y) =  \sqrt{\alpha^2 + \frac{U-c}{\epsilon}},
$$ 
taking the positive real part. Similar bounds hold for derivatives, namely for $k \ge 0$ and $l \ge 0$ with $k + l \le 3$,
\begin{equation}\label{est-GrOS2}
 |\partial_x^k \partial_y^l G_s(x,y)|  
 \le  \frac{C_{k,l}}{\mu_s^{1 - k - l}( 1 + |\Im c|)}  \Big( e^{-\theta_0\mu_s |x-y|}  +  e^{-\theta_0 \mu_s (|x+y|)} \Big) 
\eeq
\beq \label{est_GrOS2b}
| \partial_x^k \partial_y^l G_f (x,y) |
\le    \frac{C_{k,l}}{m_f^{1 - k - l} ( 1 + |\Im c| )} \Big( e^{- \theta_0 m_f|x-y|}  + e^{- \theta_0 m_f|x+y|} \Big) 
\end{equation}
\end{theorem}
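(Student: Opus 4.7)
The plan is to build the Green function explicitly from four linearly independent solutions of the homogeneous Orr--Sommerfeld equation, which split naturally into a slow pair $\phi_{s,\pm}$ and a fast pair $\phi_{f,\pm}$, and then read off the pointwise bounds directly from this representation together with the usual jump and boundary conditions.

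First I would construct the two slow modes $\phi_{s,\pm}(y)$ as perturbations of Rayleigh solutions. Since $|\Im c|\ge 3\Re\lambda_0/2$ is bounded away from $0$, the coefficient $U''/(U-c)$ in the Rayleigh equation $\Delta_\alpha\phi = \frac{U''}{U-c}\phi$ is smooth and bounded, and one can build $\phi_{s,\pm}^{Ray}$ with exact asymptotics $e^{\pm|\alpha|y}$ by Picard iteration against the $\Delta_\alpha$-Green function of Proposition \ref{proplaplace1}. The viscous term $-\eps\Delta_\alpha^2$ is then treated as a regular perturbation: setting $\phi=\phi_{s,\pm}^{Ray}+\psi$ and inverting the Rayleigh operator, the fixed-point map contracts provided $\eps\alpha^3$ is small, which is the content of \eqref{range-a}. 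This produces $\phi_{s,\pm}$ with leading behavior $e^{\pm\mu_s y}$ and derivatives scaling as $\mu_s^k$.

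The fast modes $\phi_{f,\pm}$ I would build via a WKB ansatz $\phi=A_\pm(y)e^{\pm\int_0^y\mu_f(z)\,dz}$, where $\mu_f(y)=\sqrt{\alpha^2+(U-c)/\eps}$ (positive real part) is the characteristic root of the leading-order equation \eqref{AiryA}. Substituting into $\mathrm{Orr}_{\alpha,c}\phi=0$ yields a first-order transport equation for the amplitude $A_\pm$, whose coefficients are smooth and uniformly bounded since $|\Im c|$ is bounded below and no turning point of $U-c$ appears on the real axis. Iteration gives $\phi_{f,\pm}$ with the claimed exponential growth/decay at rate $m_f=\inf_y\Re\mu_f(y)$ and derivative bounds proportional to $m_f^k$.

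With the four modes in hand, the half-line Green function is assembled as
\[
G(x,y)=\begin{cases} a_1(x)\phi_{s,-}(y)+a_2(x)\phi_{f,-}(y), & y\ge x,\\ b_1(x)\phi_{s,+}(y)+b_2(x)\phi_{f,+}(y)+b_3(x)\phi_{s,-}(y)+b_4(x)\phi_{f,-}(y), & 0\le y\le x,\end{cases}
\]
with $C^2$ matching at $y=x$, the jump $[\partial_y^3 G]_{y=x}=-1/\eps$, and the boundary conditions $G(x,0)=\partial_y G(x,0)=0$. This is a $6\times 6$ linear system whose determinant factors through the Wronskian of $(\phi_{s,\pm},\phi_{f,\pm})$; the disparate scales $\mu_s\ll m_f$ of slow and fast modes yield a lower bound of the form $c\,\eps^{-1}\mu_s m_f(1+|\Im c|)$. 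Inverting the system, grouping the $\phi_{s,\pm}$-terms as $G_s$ and the $\phi_{f,\pm}$-terms as $G_f$, and using the exponential profiles of the modes together with their images across $y=0$ (which generate the $e^{-\theta_0\mu|x+y|}$ terms) yields \eqref{est-GrOS1}--\eqref{est-GrOS1b} directly; the derivative bounds \eqref{est-GrOS2}--\eqref{est_GrOS2b} follow because differentiating $\phi_{s,-}$ or $\phi_{f,-}$ brings down a factor of $\mu_s$ or $m_f$ respectively.

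The main obstacle is the uniform WKB construction of $\phi_{f,\pm}$: the amplitude equation has variable coefficients through $\mu_f(y)$, and extracting the correct normalization $(1+|\Im c|)^{-1}$ (rather than a worse power) from the Wronskian requires careful bookkeeping of leading-order constants in the slow/fast mode expansions. The subtle interplay between the two scales $\mu_s$ and $m_f$, together with the constraint $\eps\alpha^3|\log\nu|\le 1$, is what makes the perturbative scheme converge uniformly in $\alpha$ and $c$; once this is in hand, the pointwise Green function bounds are a direct consequence of the explicit formula.
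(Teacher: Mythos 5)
Your outline follows essentially the same route as the paper: Theorem \ref{theo-GreenOS} is not proved here but quoted from \cite[Theorem 2.1]{GrN1}, and the formal sketch the paper does give is exactly your construction --- slow modes $\phi_{s,\pm}$ as perturbations of Rayleigh solutions, fast modes $\phi_{f,\pm}$ of WKB type with rate $\mu_f$, assembly of $G=G_s+G_f$ by $C^2$ matching, the unit jump of $\eps\partial_y^3 G$ at $y=x$, and the Dirichlet conditions, with the $(1+|\Im c|)^{-1}$ factor arising from $1/(\eps\mu_f^2)=1/(\eps\alpha^2+U-c)$ and the slow coefficients of size $1/(|\alpha|(1+|\Im c|))$. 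So your proposal is correct in approach and matches the paper's (sketched) argument; the rigorous details, in particular the uniform control of the fast-mode amplitudes and of the $6\times 6$ matching system, are those carried out in \cite{GrN1}.
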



In the sequel, $\Im c$ will always be bounded away from $0$, but can be very large.
This theorem is the main result of \cite[Theorem 2.1]{GrN1}. However, for the sake of completeness
we sketch in the following lines the computation of the Green function, at a formal level.
The Green function $G_{\alpha,c}(x,y)$ is constructed through the representation 
$$
G_{\alpha,c}(x,y) = \left \{ \begin{aligned} 
\sum_{k = s,f} d_{k}(x) \phi_{k,-}(y)  + \sum_{k = s,f} e_k(x) \phi_{k,-}(y) , \quad & y>x>0\\
\sum_{k = s,f} d_{k}(x) \phi_{k,-}(y)  + \sum_{k = s,f} f_k(x) \phi_{k,+}(y), \quad & 0<y<x\\
\end{aligned} \right.  
$$
where the first sum takes care of the boundary condition and the second one of the singularity of the Green function near $x = y$.

It remains to compute $d_k$, $e_k$ and $f_k$, so that $G_{\alpha,c}$ is continuous, together with its first two derivatives, so that
$\eps \partial^3_y G_{\alpha,c}$ has a unit jump at $y = x$ and so that $G_{\alpha,c}$ satisfies Dirichlet boundary condition together
with its first derivative.
The main contribution in $\eps  \partial^3_y G_{\alpha,c}$ comes from fast modes since $\mu_f \gg \mu_s$. 
In order to get a unit jump at $y = x$
we have to choose, at leading order,
$$
e_f(x)  \sim - {\phi_{f,-}(x)^{-1} \over 2 \eps \mu_f(x)^3}, \qquad
f_f(x) \sim - {\phi_{f,+}(x)^{-1} \over 2 \eps \mu_f(x)^3} 
$$
Note that
$$
{1 \over \eps \mu_f^2} = {1 \over \eps  \alpha^2 + U - c}
$$
and is therefore bounded by $C/ (1+| \Im c|)$ for large $| \Im c |$.
With this choice of $e_f$ and $f_f$, at leading order, $G_{\alpha,c}$ and its second derivatives
are equal at $x^+$ and $x^-$. To match the first derivative we use
$\phi_s(y)$. Note that
$$
\partial_y \Bigl( e_f(x) \phi_{f,-}(y) \Bigr) \sim {1 \over 2 \eps \mu_f^2} = {1 \over 2} {1 \over  \eps \alpha^2 + U - c}
$$
The jump of the first derivative of fast modes is therefore bounded by $C/ (1 + | \Im c|)^{-1}$ for large $| \Im c|$. 
This jump is compensated by the slow modes. 
As a consequence, as the jump in the first derivatives of $e^{\pm | \alpha | y}$ is $|\alpha|$,
 $e_s(x)$ and $f_s(x)$ are of order $1 / |\alpha| (1+| \Im c|)$ for large $| \Im c |$.
The bounds on $d_k$ can be obtained in a similar way.


\subsection{Pseudoinverse}


We will also be interested in the case when $Orr_{\alpha,c}$ is not invertible. In this case,
$Im(Orr_{\alpha,c})$ is of codimension $1$, and the equation $Orr_{\alpha,c}(\phi) = f$
may be solved only if $\langle f,\phi_{\alpha,c}\rangle_{L^2} = 0$, where $\phi_{\alpha,c}$ spans the orthogonal
of $Im(Orr_{\alpha,c})$. In \cite{GrN1} we show that we can construct an inverse of $Orr_{\alpha,c}$
on $Im(Orr_{\alpha,c})$ through a kernel $G = G_s + G_f$ which satisfies the same bounds as in Theorem
\ref{theo-GreenOS}. Moreover, the eigenmode of $Orr_{\alpha,c}$ does not lie in
$Im(Orr_{\alpha,c})$. More precisely

\begin{theo} \label{theopseudo} \cite{GrN1}
Let $\alpha$ be fixed.
Let $c_0$ be a simple eigenvalue of $Orr_{\alpha,c}$ with corresponding eigenmode
$\phi_{\alpha,c_0}$. Then there exists a bounded family of linear forms $l^\nu$
and a family of pseudoinverse operators $Orr^{-1}$ such that for any stream function $\phi$,
$$
Orr_{\alpha,c}\Bigl(Orr^{-1}(\phi) \Bigr)  = \phi - l^\nu(\phi) \phi_{\alpha,c_0} .
$$
Moreover, $Orr^{-1}$ may be defined through a Green function $G = G_s + G_f$
which satisfies (\ref{est-GrOS1}) and (\ref{est-GrOS1b}).
\end{theo}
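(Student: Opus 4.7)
The plan is to combine a Fredholm alternative argument with a residue-subtraction construction at the level of the Green function. Since $c_0$ is a simple eigenvalue of $Orr_{\alpha,c_0}$, the formal adjoint operator has a one-dimensional kernel generated by some $\psi_0$, with $\langle \phi_{\alpha,c_0},\psi_0\rangle\ne 0$ in an appropriately chosen pairing. I would set
$$
l^\nu(\phi) := \frac{\langle \phi,\psi_0\rangle}{\langle \phi_{\alpha,c_0},\psi_0\rangle},
$$
so that $\phi - l^\nu(\phi)\phi_{\alpha,c_0}$ lies in the codimension-one range $\operatorname{Im}(Orr_{\alpha,c_0})$. Since $\phi_{\alpha,c_0}$ and $\psi_0$ are $O(\nu^{1/4})$ perturbations of Rayleigh eigenmodes (cf.\ (\ref{mode1})--(\ref{structurepsi})), this denominator stays bounded away from zero, giving the uniform boundedness of the family $l^\nu$.

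To build $Orr^{-1}$ I would view $Orr_{\alpha,c}$ as a holomorphic family in $c$ and regularize the resolvent. For $c$ near but distinct from $c_0$, Theorem \ref{theo-GreenOS} yields $G_{\alpha,c}(x,y)$, whose coefficients $d_k, e_k, f_k$ are rational in the dispersion function $D_\alpha(c)$. Simplicity of $c_0$ means $D_\alpha$ has a simple zero there and $G_{\alpha,c}$ has at most a simple pole with residue a rank-one kernel of the form $\phi_{\alpha,c_0}(y)\psi_0(x)$, suitably normalized. Defining
$$
G(x,y) := \lim_{c\to c_0}\Bigl[G_{\alpha,c}(x,y) - \frac{\phi_{\alpha,c_0}(y)\,\psi_0(x)}{(c-c_0)\,\langle \phi_{\alpha,c_0},\psi_0\rangle}\Bigr], \qquad Orr^{-1}(\phi)(y) := \int_0^\infty G(x,y)\phi(x)\,dx,
$$
the identities $Orr_{\alpha,c}G_{\alpha,c}(\cdot,y) = \delta_y$ and $Orr_{\alpha,c_0}\phi_{\alpha,c_0} = 0$ yield the claimed pseudoinverse relation $Orr_{\alpha,c_0}(Orr^{-1}(\phi)) = \phi - l^\nu(\phi)\phi_{\alpha,c_0}$ by passing to the limit $c\to c_0$.

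For the slow/fast decomposition with the bounds (\ref{est-GrOS1})--(\ref{est-GrOS1b}), I would exploit that $G_{\alpha,c} = G_s + G_f$ already splits naturally and that both $\phi_{\alpha,c_0}$ and $\psi_0$ inherit such a splitting from the structure (\ref{structurepsi}). Writing $\phi_{\alpha,c_0} = \alpha^\nu \phi_{f,-} + \beta^\nu \phi_{s,-}$ and likewise for $\psi_0$, the rank-one residue kernel decomposes into four pieces (slow$\otimes$slow, slow$\otimes$fast, etc.), each of which carries the exponential decay in $x$ and $y$ needed to be absorbed, up to universal constants, into either $G_s$ or $G_f$ with the prefactors $1/\mu_s$, $1/m_f$ preserved. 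The main obstacle lies precisely here: one must verify that $\psi_0$ admits the same fast/slow asymptotic expansion and decay rates as $\phi_{\alpha,c_0}$, and that the simplicity of $c_0$ is stable in $\nu$ so that no nearby zero of $D_\alpha$ spoils the \emph{uniformity} of the bounds. Both properties follow, as in \cite{GrN1}, from an implicit function theorem applied to $D_\alpha$ together with the explicit asymptotics of the fundamental solutions $\phi_{s,\pm}, \phi_{f,\pm}$ recalled above.
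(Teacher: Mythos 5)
The paper does not actually prove Theorem \ref{theopseudo}; it is quoted verbatim from \cite{GrN1}, so your sketch can only be compared with the resolvent analysis carried out there. Your architecture (Fredholm alternative plus subtraction of the simple pole of the Green function in $c$) is the natural one, but two of your concrete steps fail as written. First, the residue bookkeeping is incorrect because the spectral parameter enters through $(U-c)\Delta_\alpha$, so that $\partial_c Orr_{\alpha,c} = -\Delta_\alpha$. Writing $G_{\alpha,c}=R/(c-c_0)+H+O(c-c_0)$ and matching powers of $c-c_0$ in $Orr_{\alpha,c,y}G_{\alpha,c}(x,y)=\delta(x-y)$ forces $R(x,y)=\phi_{\alpha,c_0}(y)a(x)$ with $a(x)=-\psi_0(x)/\langle \Delta_\alpha\phi_{\alpha,c_0},\psi_0\rangle$, and then $Orr_{\alpha,c_0,y}H(x,y)=\delta(x-y)+a(x)\,\Delta_\alpha\phi_{\alpha,c_0}(y)$. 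Hence your regularized kernel produces a defect along the \emph{vorticity} $\Delta_\alpha\phi_{\alpha,c_0}$, normalized by $\langle \Delta_\alpha\phi_{\alpha,c_0},\psi_0\rangle$ (equivalently $D_\alpha'(c_0)$), not the defect $l^\nu(\phi)\phi_{\alpha,c_0}$ with denominator $\langle\phi_{\alpha,c_0},\psi_0\rangle$ that you wrote. Relatedly, simplicity of $c_0$ for this pencil guarantees the non-vanishing of $\langle \Delta_\alpha\phi_{\alpha,c_0},\psi_0\rangle$ (absence of a Jordan chain), \emph{not} of $\langle\phi_{\alpha,c_0},\psi_0\rangle$, so the uniform boundedness of your $l^\nu$ does not follow from the facts you invoke; as a consequence your Fredholm step and your residue step are mutually inconsistent, and neither yields the stated identity without reworking the projection.

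Second, you cannot appeal to Theorem \ref{theo-GreenOS} for $c$ near $c_0$: its hypotheses require $|\alpha\Im c|>3\Re\lambda_0/2$, which excludes a neighborhood of the unstable eigenvalue, and its uniform bounds are obviously incompatible with the pole of $G_{\alpha,c}$ at $c_0$. Therefore the bounds \eqref{est-GrOS1}--\eqref{est-GrOS1b} for the regular part $G=G_s+G_f$ are not obtained by splitting the rank-one residue into slow/fast tensor pieces; they require uniform-in-$\nu$ control of the $c$-derivatives of the fundamental solutions $\phi_{s,\pm},\phi_{f,\pm}$ and of the dispersion function near its simple zero, including the boundary-layer structure of the adjoint null function $\psi_0$ (which is not merely an $O(\nu^{1/4})$ perturbation of a Rayleigh mode in the norms used here). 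This is exactly the quantitative content of \cite{GrN1} that you defer to in one sentence, and it is the heart of the theorem; as it stands, the proposal has the right skeleton but a wrong normalization of $l^\nu$ and no proof of the uniform kernel bounds.
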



\subsection{Bounds on solutions of Orr Sommerfeld equations}


\begin{proposition}\label{prop-OS1} ($L^\infty$ norms) \\ 
Let $\phi$ solve the Orr-Sommerfeld problem \eqref{OS1}-\eqref{OS2}. 
For $| \epsilon \alpha^3 | \le 1$, $|\alpha \Im c| >  3 \Re \lambda_0 / 2$ and $| \Im c|$ bounded away from $0$, there hold
\beq \label{LLa1}
\begin{aligned}
 | \alpha |^2  \|\phi \|_{0,0} + 
 | \alpha |  \| \nabla_\alpha \phi \|_{0,0}  +  \|\nabla_\alpha^2 \phi \|_{0,0}
&\le {C_0 \over 1+ | \Im c |} \|f\|_{0,0}
\end{aligned}
\eeq
and
\beq \label{LLa2}
\begin{aligned}
 \|\sqrt \epsilon \nabla_\alpha^3\phi \|_{0,0}  + \| \epsilon \nabla_\alpha^4\phi \|_{0,0}
&\le C_0 \|f\|_{0,0}.
\end{aligned}
\eeq
\end{proposition}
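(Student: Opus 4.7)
The plan is to use the Green-function representation of Theorem~\ref{theo-GreenOS}. Writing $\phi(y) = \int_0^\infty G_{\alpha,c}(x,y)\, f(x)\, dx$ with $G_{\alpha,c} = G_s + G_f$, each $L^\infty$ bound reduces to estimating $\int_0^\infty |\partial_x^k \partial_y^l G_{\alpha,c}(x,y)|\, dx$ and multiplying by $\|f\|_{0,0}$; an extra factor $|\alpha|^a$ coming from $\nabla_\alpha$ simply promotes an order-$(k+l)$ estimate to order $(k+l+a)$.

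For \eqref{LLa1}, integrating the exponential kernel $e^{-\theta_0\mu_s|x-y|}+e^{-\theta_0\mu_s(x+y)}$ in $x$ gives, via \eqref{est-GrOS2},
$$\int_0^\infty|\partial_y^l G_s(x,y)|\,dx \le \frac{C\,\mu_s^{l-2}}{1+|\Im c|} = \frac{C\,|\alpha|^{l-2}}{1+|\Im c|}, \qquad l\le 2,$$
which, combined with the powers of $|\alpha|$ from $\nabla_\alpha$, yields the desired bound on each component of $\phi$, $\nabla_\alpha\phi$ and $\nabla_\alpha^2\phi$. The analogous bound from \eqref{est_GrOS2b} for the fast part is $C\,m_f^{l-2}/(1+|\Im c|)$; since $\Re\mu_f(y)\ge c_0|\alpha|$ (using $\mu_f^2 = \alpha^2 + (U-c)/\epsilon$ together with the sign of $\Re((U-c)/\epsilon) = \alpha\Im c/\sqrt{\nu}$), one has $m_f\gtrsim|\alpha|$, so the fast contribution is dominated by the slow one as long as $l\le 2$.

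For \eqref{LLa2} the same scheme applies at order three. The slow contribution is $\sqrt{|\epsilon|}\,|\alpha|/(1+|\Im c|)$, bounded by $1$ because $|\epsilon|\,\alpha^2 \le 1$ under the hypothesis $|\epsilon\alpha^3|\le 1$; the fast contribution is $\sqrt{|\epsilon|}\,m_f/(1+|\Im c|)$, which is controlled via
$$\sqrt{|\epsilon|}\, m_f \le \sqrt{|\epsilon|\alpha^2 + |U-c|} \le C\sqrt{1+|\Im c|},$$
leaving at most $C/\sqrt{1+|\Im c|}\le C$. Since Theorem~\ref{theo-GreenOS} only furnishes derivative bounds up to order three, the fourth-derivative estimate is extracted from the equation itself: rewriting \eqref{OS1} as
$$\epsilon\,\Delta_\alpha^2\phi = (U-c)\,\Delta_\alpha\phi - U''\phi - f,$$
each component of $\epsilon\,\nabla_\alpha^4\phi$ is bounded using $\epsilon\alpha^2\le 1$ together with the already proved bounds on $\|\Delta_\alpha\phi\|_{0,0}$ and $|\alpha|^2\|\phi\|_{0,0}$, while $|U-c|\le C(1+|\Im c|)$ cancels the $1/(1+|\Im c|)$ factor in $\|\Delta_\alpha\phi\|_{0,0}$.

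The main obstacle, and the precise reason for the hypothesis $|\epsilon\alpha^3|\le 1$, is the balance between the two scales: each derivative of $G_s$ costs a factor $|\alpha|$, each derivative of $G_f$ costs $m_f$, and the $\sqrt{\epsilon}$-weighting in \eqref{LLa2} is tuned exactly so that both contributions remain $O(1)$; this balance degenerates as soon as $|\epsilon\alpha^3|\gg 1$. A secondary verification is that the $(U-c)$-prefactor appearing in the bootstrap for the fourth derivative does not spoil the estimate, which uses that $\Re c$ stays in a bounded range for the values of $c$ under consideration.
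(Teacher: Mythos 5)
Your proposal is correct and follows essentially the same route as the paper: the Green-function representation of Theorem \ref{theo-GreenOS}, integration of the slow and fast exponential kernels with the observation that $m_f\gtrsim|\alpha|$ (equivalently that $\alpha^2/\mu_f^2$ is bounded under $|\epsilon\alpha^3|\le 1$ and the condition on $\alpha\Im c$), the $\sqrt{|\epsilon|}\,\mu_s$ and $\sqrt{|\epsilon|}\,m_f$ balance for the third-order bound, and the Orr--Sommerfeld equation itself for $\epsilon\nabla_\alpha^4\phi$. No substantive differences from the paper's argument.
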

Equations (\ref{LLa1}) and (\ref{LLa2}) express a classical regularity result: Orr Sommerfeld equation is a small fourth order 
elliptic perturbation
of a second order elliptic equation. Therefore we gain the full control on two derivatives of the solution, and partial controls on
third and fourth derivatives, with prefactors $\sqrt{\eps}$ and $\eps$. 
\begin{proof} By construction, the solution $\phi$ is of the form 
\beq \label{expressint} 
\phi(y) = \int_0^\infty G_{\alpha,c} (x,y) f(x) \; dx .
\eeq
Hence, 
$$
\begin{aligned} 
|\phi(y)| 
&\le  C_0 \int_0^\infty  \Big( \frac{e^{-\theta_0\mu_s |x-y|} }{\mu_s (1+|\Im c|)}
+ \frac{e^{- \theta_0 m_f|x-y|}}{ m_f (1+|\Im c|)}  \Big) f(x)\; dx 
\\
& +  C_0 \int_0^\infty  \Big( \frac{e^{-\theta_0\mu_s |x+y|}}{\mu_s (1+|\Im c|)} 
+ \frac{e^{- \theta_0 m_f|x+y|} }{ m_f (1+|\Im c|)} \Big) f(x)\; dx 
\\
&\le  C_0 \| f \|_{0,0} (\mu_s^{-2} + m_f^{-2}) (1+|\Im c|)^{-1} 
.\end{aligned}$$
We recall that $\mu_s = |\alpha|$ and that  $\epsilon \mu_f^2 = \eps \alpha^2 + (U-c)$. Hence,
$$
{\alpha^2 \over \mu_f^2} = {2 \over 1 + \alpha (U - c)/ (\eps \alpha^3) }.
$$
which is bounded since $| \alpha \Im c | > \Re \lambda_0$ and $| \eps \alpha^3 | \le 1$.
This proves that 
$$
\| \alpha^2 \phi\|_{0,0} \le C_0 (1+|\Im c|)^{-1} \|f\|_{0,0}.
$$ 
To get the bounds on  $\alpha \partial_y\phi$ and $\partial_y^2 \phi$,
 we differentiate (\ref{expressint}) with respect to $y$, splitting the integral in $x < y$ and $x > y$, and fulfill similar computations.

Similarly, we compute 
$$
\begin{aligned} 
|\partial_y^3\phi(y)| 
&\le C_0  (1+|\Im c|)^{-1} \int_0^\infty  \Big( \mu_s^2e^{-\theta_0\mu_s |x-y|} + m_f^2 e^{- \theta_0 m_f|x-y|} \Big) f(x)\; dx 
\\
& + C_0  (1+|\Im c|)^{-1} \int_0^\infty  \Big( \mu_s^2e^{-\theta_0\mu_s |x+y|} + m_f^2 e^{- \theta_0 m_f|x+y|} \Big) f(x)\; dx 
\\
&\le C_0( \mu_s  + m_f) (1+|\Im c|)^{-1} \| f\|_{0,0} 
.\end{aligned}$$
As $\sqrt{| \eps |} |\alpha| \le 1$, $\sqrt{| \eps |} \mu_s \le C$ and
$$
\sqrt{| \eps | } \mu_f = | \sqrt{ \eps \alpha^2 + U - c} | \le C (1 + | \Im c |),
$$ 
which yields the estimate for $\sqrt \epsilon \partial_y^3\phi$.
For $\epsilon \partial_y^4 \phi$, we directly use the Orr-Sommerfeld equation $Orr_{\alpha,c}(\phi) = f$. 
\end{proof}

\begin{proposition}\label{prop-OS2} (Boundary layer norms)\\
Let $\phi$ solve the Orr-Sommerfeld problem \eqref{OS1}-\eqref{OS2}, with source $f$ having a boundary layer behavior.
For $| \epsilon \alpha^3 \log \nu | \le 1$, $| \alpha \Im c | > 3 \Re \lambda_0 / 2$ and $| \Im c|$ bounded away from $0$, there holds
\beq \label{propOS2bis}
 (1 + | \Im c |) \Big( \|  \nabla_\alpha \phi \|_{0,0}  + \| \nabla_\alpha^2 \phi\|_{0,\delta}\Big)
+ | \epsilon | \ \|\nabla_\alpha^4 \phi\|_{0,\delta} \le C_0 \|f\|_{0,\delta}.
\eeq
\end{proposition}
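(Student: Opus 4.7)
The plan is to use the Green function representation
$\phi(y) = \int_0^\infty G_{\alpha,c}(x,y) f(x)\,dx$
provided by Theorem~\ref{theo-GreenOS}, with the splitting $G_{\alpha,c} = G_s + G_f$, and to estimate the relevant norms of $\phi$ by inserting the pointwise bounds \eqref{est-GrOS1}--\eqref{est_GrOS2b} together with the input bound $|f(x)| \le \|f\|_{0,\delta}(\delta^{-1}e^{-x/\delta}+1)$. For the fourth-derivative estimate, Theorem~\ref{theo-GreenOS} only provides bounds up to third order on $G$, so I would instead read $\epsilon\partial_y^4\phi$ off the Orr--Sommerfeld equation,
\begin{equation*}
\epsilon \partial_y^4 \phi = 2\epsilon\alpha^2\partial_y^2\phi - \epsilon\alpha^4 \phi + (U-c)\Delta_\alpha \phi - U''\phi - f,
\end{equation*}
and close on the already-controlled lower-order norms.

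The technical heart is the convolution estimate
\begin{equation*}
\mu \int_0^\infty e^{-\theta_0 \mu |x-y|}\bigl(\delta^{-1} e^{-x/\delta} + 1\bigr)\,dx \le C\bigl(\delta^{-1} e^{-y/\delta} + 1\bigr),
\end{equation*}
valid for $\mu\delta \ge c_0 > 0$ and obtained by splitting the integral at $x=y$ and integrating explicitly. For the fast Green function this hypothesis is met since $m_f$ is of order $\sqrt{|\alpha|}/\nu^{1/4}$ and $\delta = \gamma_0 \nu^{1/4}$, giving $m_f \delta \ge c_0\sqrt{|\alpha|}$. Granted this lemma, I would proceed in three steps. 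First, bound $(1+|\Im c|)\|\nabla_\alpha\phi\|_{0,0}$ by inserting the $k+l=1$ kernel bounds and using the trivial $L^1$ estimate $\int e^{-\theta_0\mu|x-y|}dx \le 2/(\theta_0\mu)$, so the $(1+|\Im c|)^{-1}$ factor in the Green function cancels the prefactor on the left and $\alpha$-factors are absorbed against $m_f \ge |\alpha|$ and $\mu_s = |\alpha|$. Second, bound $(1+|\Im c|)\|\nabla_\alpha^2\phi\|_{0,\delta}$ by inserting the $k+l=2$ bounds, which carry an extra factor $\mu$; the convolution lemma then produces exactly the weight $\delta^{-1}e^{-y/\delta}+1$ on the output, canceling the weight in $\|\cdot\|_{0,\delta}$. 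Third, obtain $|\epsilon|\|\nabla_\alpha^4\phi\|_{0,\delta}$ from the displayed equation, controlling $\epsilon\alpha^4$ via the range condition $|\epsilon\alpha^3\log\nu|\le 1$ combined with the $\alpha^{-2}$ gain already built into $\|\phi\|_{0,\delta}$.

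The main obstacle is the slow-mode contribution to $\|\nabla_\alpha^2\phi\|_{0,\delta}$ in the regime $\mu_s\delta = |\alpha|\delta \ll 1$, where the slow kernel decays more slowly than the source's $\delta^{-1}e^{-x/\delta}$ concentration and the naive bound leaves a tail of the form $\mu_s e^{-\theta_0\mu_s y}$ that is not absorbed by $\delta^{-1}e^{-y/\delta}+1$ at intermediate $y \sim 1/\mu_s$. I would address this by exploiting the Dirichlet-type structure of the Green function near $x=0$: the pointwise bound $|\partial_x^2 G_s(x,y)| \le C\mu_s/(1+|\Im c|)$ from \eqref{est-GrOS2}, together with a Taylor expansion, sharpens the near-boundary estimate to $|G_s(x,y)| \le C\mu_s x^2/(1+|\Im c|)$ for $x$ below the slow scale; combined with the exponential bound away from the boundary, this absorbs the singular part of $\delta^{-1}e^{-x/\delta}$ and closes the estimate uniformly within the range \eqref{range-a}, in the same spirit as the Laplace boundary-layer argument of Proposition~\ref{proplaplace3}.
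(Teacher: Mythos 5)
Your overall strategy---representing $\phi$ through the Green function of Theorem \ref{theo-GreenOS}, convolving the kernel bounds against the weight $\delta^{-1}e^{-x/\delta}+1$, and recovering $\epsilon\,\partial_y^4\phi$ from the equation itself---is the same as the paper's, and you correctly locate the true difficulty in the slow part $G_s$ in the regime $|\alpha|\delta\ll 1$. But your resolution of that difficulty has a genuine gap. The claimed estimate $|G_s(x,y)|\le C\mu_s x^2/(1+|\Im c|)$ ``by Taylor expansion'' presupposes $G_s(0,y)=\partial_x G_s(0,y)=0$, and neither vanishing is available: only the \emph{full} Green function satisfies the boundary condition, so that $G_s(0,y)=-G_f(0,y)\neq 0$ in general. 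Controlling this nonzero boundary trace is precisely the delicate step of the paper's proof: there one writes $\partial_y^2 G_s(0,y)=-\partial_y^2 G_f(0,y)$ and shows it is $O\bigl((1+|\Im c|)^{-1}\bigr)$ by treating separately $y\ge \frac{1}{\theta_0 m_f}\log m_f$ (exponential decay of $G_f$) and $y\le \frac{1}{\theta_0 m_f}\log m_f$ (using the bound on $\partial_y^3 G_s$ and the smallness of $m_f^{-1}\mu_s^2\log m_f$), and only then invokes the first-order expansion $|\partial_y^2 G_s(x,y)|\le |\partial_y^2 G_s(0,y)|+C\mu_s^2\,x/(1+|\Im c|)$, whose $x$-factor, convolved with $\delta^{-1}e^{-x/\delta}$, produces the harmless $\delta\alpha^2$. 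Without this boundary-trace step your argument does not close. Note also that the quantity which needs the near-boundary gain is $\partial_y^2 G_s$ (for $\|\nabla_\alpha^2\phi\|_{0,\delta}$), not $G_s$ itself; a second-order expansion of $\partial_y^2 G_s$ would require $\partial_x^2\partial_y^2 G_s$, i.e.\ $k+l=4$, which lies outside the range $k+l\le 3$ of Theorem \ref{theo-GreenOS}, so the quadratic gain you invoke is not available where you need it.

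A smaller point: your convolution lemma is not valid under the stated hypothesis $\mu\delta\ge c_0$ for an arbitrary $c_0>0$. When $\theta_0\mu\delta\le 1$ the output decays only like $e^{-\theta_0\mu y}$, and at $y\sim 1/\mu$ its amplitude $\sim\mu$ is not dominated by $\delta^{-1}e^{-y/\delta}+1$. One needs $\theta_0 m_f\delta$ strictly larger than $1$ (the paper requires $m_f>2\delta^{-1}$), which is exactly why $\gamma_0$ in $\delta=\gamma_0\nu^{1/4}$ is taken sufficiently large; your application to the fast kernel is then fine, but the threshold should be stated correctly. Finally, the paper first splits $f=f_b+f_i$ with a cutoff at scale $\delta\log\delta^{-1}$ and disposes of $f_i$ by Proposition \ref{prop-OS1}; your direct use of the bound $|f|\le \|f\|_{0,\delta}(\delta^{-1}e^{-x/\delta}+1)$ is an acceptable variant of that step.
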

\begin{proof} 
Let $\chi(y)$ be a non negative function which equals $1$ for $0 \le y \le 1$ and $0$ for $y > 1$. Let us split the forcing
term $f$ in its boundary layer term and in its "inner term"
$$
f = f_b + f_i
$$
with 
$$
f_b(y)=   \chi\Bigl( { y \over \delta \log \delta^{-1}} \Bigr) f(y) .
$$
Note that $\| f_i \|_{0,0} \le C \| f \|_{0,\delta}$ and 
$$
| f_b (x) | \le C \| f \|_{0,\delta} \delta^{-1} e^{- y / \delta} .
$$
Let $\phi_b$ and $\phi_i$ be the solutions of $Orr(\phi_b) = f_b$ and $Orr(\phi_i) = f_i$.
Note that $\phi_i$ satisfies (\ref{propOS2bis}), thanks to the previous Proposition. 

It remains to bound $\phi_b$. For this
we split the Green function in its fast part $G_f$ and its slow part $G_s$.
For the fast part we have to bound $G_f \star f_b$, which is a convolution between an exponentially decreasing
kernel and a exponentially decreasing source. It is therefore bounded by
$C \| f \|_{0,\delta} \delta^{-1} e^{- y / \delta}$ provided $m_f > 2 \delta^{-1}$, which is the case provided $\gamma_0$ is large enough.

Let us turn to the slow part $G_s$. Let us first assume that $G_s(0,y) = 0$ for any positive $y$.
Then $\partial_y^2 G_s(0,y) = 0$ for any positive $y$. As
$$
| \partial_y^2 \partial_x G_s(x,y) | \le C {\mu_s^2 \over (1 + | \Im c |)}
$$ 
we have 
$$
| \partial_y^2 G_s(x,y) |Ê\le {C \mu_s^2 x  \over 1 + | \Im c |} ,
$$  
noting the $x$ factor on the right hand side. By convolution between $G_s$ and $f_b$ we have
$$
|\partial_y^2 \phi_b(y) | \le C {| \delta \alpha^2 | \over 1 + | \Im c|}  \| f \|_{0,\delta},
$$
which leads to the desired bound, taking into account that $| \delta \alpha^2 | \le C$, provided that $G_s(0,y) = 0$ for any positive $y$. 

However, it is not the case that $G_s(0,y)=0$ for $y>0$, but we rather have 
$$
G(0,y) = G_f(0,y) + G_s(0,y) =0.
$$ 
Therefore
$\partial_y^2 G_s(0,y) = - \partial_y^2 G_f(0,y)$. For $y \ge \frac{1}{\theta_0 m_f} \log m_f$, we get
$$
|Ê\partial_y^2 G_s(0,y) | \le {C_0  m_f \over  1 + | \Im c | } e^{- \theta_0 m_f y}  \le {C_0  \over  1 + | \Im c | } .
$$
On the other hand, for $y \le  \frac{1}{\theta_0 m_f} \log m_f$, we use
$$
|Ê\partial_y^3 G_s(0,y) | \le {C_0 \mu_s^2 \over 1 + | \Im c | } 
$$
to get
$$
|Ê\partial_y^2 G_s(0,y) | \le {C_0 \over 1 + | \Im c |  } 
+  {C_0 m_f^{-1} \mu_s^2 \log m_f \over 1 + | \Im c | } 
$$
which is bounded by a constant divided by $(1 + | \Im c |)$, upon recalling $m_f > 2 \delta^{-1}$ and using the assumption $\alpha^2 \sqrt \nu \log \frac1\nu \le 1$. This ends the proof of the bound on $\partial_y^2 \phi_b$.
The bounds on $\phi_b$ and $\partial_y \phi_b$ are similar.
\end{proof}


\subsection{Generator functions}


In this section, we study the generator of solutions to the Orr-Sommerfeld problem.

\begin{proposition}\label{prop-OS3} Let $\phi$ solve the Orr-Sommerfeld problem \eqref{OS1}-\eqref{OS2}, 
with source term $f$.
For $| \epsilon \alpha^3 \log \nu | \le  1$, $| \alpha \Im c | > 3 \Re \lambda_0 / 2$ and for $| \Im c|$ bounded away from $0$, 
there are positive constants $C_0, \theta_0$ (independent
on $\epsilon$ and $\alpha$) so that 
\beq \label{prop-OS3-1}
Gen_{0,\alpha}(\nabla_\alpha \phi) + Gen_{\delta,\alpha}(\nabla^2_\alpha \phi) \le  \frac{C_0}{1+|\Im c|} Gen_{\delta,\alpha} (f),
\eeq
for all $z_2$ so that $0 \le z_2  \le  \theta_0$. Moreover, provided $f_\alpha = 0$ if $| \epsilon \alpha^3  \log \nu | \ge 1$,
\beq \label{prop-OS3-2}
Gen_0(u) + Gen_\delta(\omega)  \le  \frac{C_0}{1+|\Im c|} Gen_\delta (f),
\eeq
\beq \label{prop-OS3-2}
\partial_{z_1} Gen_0(u) + \partial_{z_1} Gen_\delta(\omega) 
\le  \frac{C_0}{1+|\Im c|} \partial_{z_1} Gen_\delta (f),
\eeq
and
\beq \label{prop-OS3-3}
\partial_{z_2} Gen_0(u) + \partial_{z_2} Gen_\delta(\omega) 
 \le  \frac{C_0}{1+|\Im c|} \Bigl[ \partial_{z_2} Gen_\delta(f) + Gen_\delta(f) \Bigr].
\eeq
\end{proposition}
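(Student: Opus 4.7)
The strategy is to mirror the proof of Proposition~\ref{prop-elliptic} for the Laplacian, with Proposition~\ref{prop-OS2} (boundary-layer bounds for Orr--Sommerfeld) playing the role that Proposition~\ref{proplaplace3} played there. For each $n\ge 0$, I would apply $\partial_y^n$ to $Orr_{\alpha,c}(\phi)=f$ and then multiply by $\varphi^n$, obtaining an Orr--Sommerfeld equation for $\psi_n := \varphi^n\partial_y^n\phi$ of the form
$$
Orr_{\alpha,c}(\psi_n) = \varphi^n\partial_y^nf + (\text{geometric commutators from }\varphi^n) + (\text{variable-coefficient commutators from }U).
$$
The geometric commutators have the same structure as in Proposition~\ref{prop-elliptic} and produce contributions bounded by $n A_{n-1}$ and $n(n-1) A_{n-2}$, with $A_m$ the norm of $\psi_m$ we wish to control. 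What is new is that $Orr_{\alpha,c}$ has variable coefficients $U-c$ and $U''$, so commuting $\partial_y^n$ past them generates
$$
-\varphi^n\sum_{k=1}^n\binom{n}{k}U^{(k)}\partial_y^{n-k}\Delta_\alpha\phi + \varphi^n\sum_{k=1}^n\binom{n}{k}U^{(k+2)}\partial_y^{n-k}\phi.
$$

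Applying Proposition~\ref{prop-OS2} to $\psi_n$ and setting
$$
A_n := \|\varphi^n\partial_y^n\nabla_\alpha\phi\|_{0,0} + \|\varphi^n\partial_y^n\nabla_\alpha^2\phi\|_{0,\delta},
$$
I expect an inequality of the form
$$
A_n\le\frac{C_0}{1+|\Im c|}\Bigl[\|\partial_y^n f\|_{n,\delta}+nA_{n-1}+n(n-1)A_{n-2}+\sum_{k=1}^n\binom{n}{k}\bigl(\|U^{(k)}\|_\infty+\|U^{(k+2)}\|_\infty\bigr)A_{n-k}\Bigr].
$$
The crucial point is that the two-derivative loss in $\partial_y^{n-k}\Delta_\alpha\phi = \partial_y^{n-k+2}\phi-\alpha^2\partial_y^{n-k}\phi$ is absorbed at the same level $A_{n-k}$, since $A_{n-k}$ already controls $\|\varphi^{n-k}\partial_y^{n-k+2}\phi\|_{0,\delta}$; together with the pointwise bound $\varphi^n\le\varphi^{n-k}$, the recursion stays triangular. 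Multiplying by $z_2^n/n!$ and summing, the geometric commutators contribute factors of $z_2$ and $z_2^2$ times $G(z_2):=\sum_n A_n z_2^n/n!$, absorbable on the left for $z_2$ small. The $U$-commutator sum factorizes as a Cauchy product
$$
\Bigl(\sum_{k\ge1}\frac{\|U^{(k)}\|_\infty+\|U^{(k+2)}\|_\infty}{k!}z_2^k\Bigr)\,G(z_2),
$$
which is $O(z_2)\,G(z_2)$ for $z_2$ small by analyticity of $U$ on a strip. Absorbing all three contributions yields \eqref{prop-OS3-1}.

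The summed estimates follow by multiplying by $e^{z_1|\alpha|}$ and summing over $\alpha$ satisfying $|\epsilon\alpha^3\log\nu|\le1$; the hypothesis $f_\alpha=0$ outside this range guarantees that only $\alpha$ admissible for Proposition~\ref{prop-OS2} contribute. The $\partial_{z_1}$-version \eqref{prop-OS3-2} is obtained by inserting the extra factor $|\alpha|$ before summing. For \eqref{prop-OS3-3} one reweighs with $z_2^{n-1}/(n-1)!$ for $n\ge1$ instead of $z_2^n/n!$; the term $nA_{n-1}z_2^{n-1}/(n-1)!$ then produces both a $\partial_{z_2}G$-piece (absorbable for $z_2$ small) and an unweighted copy of $G$, bounded by $Gen_{\delta,\alpha}(f)$ via the already established \eqref{prop-OS3-1}, accounting for the extra $Gen_\delta(f)$ on the right-hand side. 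The main obstacle is handling the variable-coefficient commutators simultaneously with the geometric ones so that all the factorials and $\varphi$-powers match and the Cauchy product closes; the saving grace is that commuting $\partial_y^n$ with $U-c$ and $U''$ produces only $U^{(k)}$ for $k\ge1$, whose Taylor sum in $z_2$ vanishes at $z_2=0$ and therefore provides the required smallness factor.
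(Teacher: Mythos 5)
Your strategy is exactly the paper's: commute $\varphi^n\partial_y^n$ through $Orr_{\alpha,c}$, apply the boundary-layer resolvent bound of Proposition \ref{prop-OS2} at each level $n$, close a triangular recursion for the weighted norms $A_n$, sum with weights $z_2^n/n!$ using a Cauchy product and the analyticity of $U$ (the $k\ge 1$ truncation giving the $O(z_2)$ smallness), and then obtain the summed and $\partial_{z_1}$-, $\partial_{z_2}$-versions by weighting with $e^{z_1|\alpha|}$, with $|\alpha|e^{z_1|\alpha|}$, and with $z_2^{n-1}/(n-1)!$ respectively, the leftover $A_0$-terms accounting for the extra $Gen_\delta(f)$.

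One piece of your bookkeeping, however, does not close as written. You define $A_n$ using only $\|\varphi^n\partial_y^n\nabla_\alpha\phi\|_{0,0}+\|\varphi^n\partial_y^n\nabla_\alpha^2\phi\|_{0,\delta}$ and claim the geometric commutators are controlled by $nA_{n-1}+n(n-1)A_{n-2}$ ``as in Proposition \ref{prop-elliptic}''. But $Orr_{\alpha,c}$ is fourth order: commuting $\varphi^n$ through $\epsilon\Delta_\alpha^2$ produces terms such as $3\epsilon\,\partial_y(\varphi^n)\,\partial_y^{n+3}\phi$ and $6\epsilon\,\partial_y^2(\varphi^n)\,\partial_y^{n+2}\phi$, which involve up to $n+3$ derivatives of $\phi$ and are \emph{not} bounded by your $A_{n-1}$, $A_{n-2}$ (these control at most $n+1$ derivatives at level $n-1$). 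The paper fixes this by augmenting the iterated norm with the fourth-order piece, $\cA_n\ni |\epsilon|(1+|\Im c|)^{-1}\|\varphi^n\partial_y^n\nabla_\alpha^4\phi\|_{0,\delta}$, which Proposition \ref{prop-OS2} does control ($|\epsilon|\,\|\nabla_\alpha^4\phi\|_{0,\delta}\le C_0\|f\|_{0,\delta}$), and accordingly the recursion reaches back to $\cA_{n-4}$, i.e.\ $C_0\sum_{k=1}^4\frac{n!}{(n-k)!}\cA_{n-k}$, rather than only two steps; the absorption argument for small $z_2$ is unaffected. A second, minor imprecision: you cannot keep the prefactor $\frac{C_0}{1+|\Im c|}$ on the commutator contributions, since the $(U-c)$-commutators carry a factor $|U-c|\sim 1+|\Im c|$ that exactly cancels the resolvent gain; the correct recursion has $\frac{C_0}{1+|\Im c|}$ only on $\|\varphi^n\partial_y^nf\|_{0,\delta}$ and plain constants on the $\cA_{n-k}$ terms, which is still sufficient since the absorption never uses that factor.
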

\begin{proof} We estimate each term in the generator functions. 
The term  $n=0$ is already treated in Proposition  \ref{prop-OS2}. 
For $n\ge 1$, we compute 
\begin{equation}\label{OS-yndy}
\begin{aligned}
 Orr_{\alpha,c} (\varphi^n \partial_y^n \phi) 
 &= \varphi^n \partial_y^n f 
- 3 \eps \partial_y \varphi^n \partial_y^{n+3} \phi 
 - 6 \eps \partial_y^2 \varphi^n \partial_y^{n+2} \phi 
\\&\quad 
- 3 \eps \partial_y^3 \varphi^n \partial_y^{n+1}\phi  - \epsilon \partial_y^4\varphi^n \partial_y^n \phi 
\\&\quad + 4 \eps \alpha^2 \partial_y \varphi^n \partial_y^{n+1} \phi
+ 2 \eps \alpha^2 \partial_y^2 \varphi^n \partial_y^n \phi
\\&\quad + 
(U-c)\partial_y^2 \varphi^n \partial_y^n \phi 
+ 2 (U-c)\partial_y \varphi^n \partial_y^{n+1}\phi  
\\&\quad + 
 \sum_{1\le k\le n} \frac{n!}{k! (n-k)!} \varphi^n\Big(\partial_y^k U \partial_y^{n-k}
  \Delta_\alpha \phi - \partial_y^k U'' \partial_y^{n-k} \phi \Big) .
\end{aligned}
\end{equation}
Let us estimate each term on the right. For convenience, we set 
$$ \cA_n: =\| \varphi^n \partial_y^n \nabla_\alpha \phi \|_{0,0}  + \| \varphi^n \partial_y^n \nabla_\alpha^2 \phi\|_{0,\delta} 
+ | \epsilon |  (1 + | \Im c |)^{-1} \| \varphi^n \partial_y^n\nabla_\alpha^4 \phi\|_{0,\delta} ,
$$ 
for $n\ge 0$, and $\cA_n=0$ for negative $n$. As $\varphi = y/(1+y)$, we compute 
$$
\| 3 \eps \partial_y \varphi^n \partial_y^{n+3} \phi 
 + 6 \eps \partial_y^2 \varphi^n \partial_y^{n+2} \phi 
 + 3 \eps \partial_y^3 \varphi^n \partial_y^{n+1}\phi  + \epsilon \partial_y^4\varphi^n \partial_y^n \phi \|_{0,\delta}
 $$
 $$
\le 
C_0(1 + | \Im c |) \sum_{k=1}^4 \frac{n!}{(n-k)!}\cA_{n-k}, 
$$
$$
\| 4 \eps \alpha^2 \partial_y \varphi^n \partial_y^{n+1} \phi
+ 2 \eps \alpha^2 \partial_y^2 \varphi^n \partial_y^n \phi \|_{0,\delta} \le
C \eps \alpha^2 \Bigl[ n \cA_{n-1} + n (n-1) \cA_{n-2}\Big] ,
$$
and 
$$
\begin{aligned}
\|(U-c)\partial_y \varphi^n \partial_y^{n+1}\phi \|_{0,\delta} 
& \le  C_0(1 + | \Im c |) n \cA_{n-1}
\\
\|(U-c)\partial_y^2 \varphi^n \partial_y^n \phi \|_{0,\delta}
& \le  C_0(1 + | \Im c |) \Big[ n \cA_{n-1} + n (n-1) \cA_{n-2}\Big] .
\end{aligned}$$
Finally, we treat the summation in \eqref{OS-yndy}. Set 
$$
\cB_n =  \| \partial_y^nU \|_{0,0} + \|  \partial_y^n U'' \|_{0,0} .
$$
We estimate 
$$
 \sum_{1\le k\le n} \frac{n!}{k! (n-k)!} \|\varphi^n(\partial_y^k U \partial_y^{n-k}
  \Delta_\alpha \phi - \partial_y^k U'' \partial_y^{n-k} \phi ) \|_{0,\delta}
$$
$$
\le C_0 \sum_{1\le k\le n} \frac{n!}{k! (n-k)!} \Big( \|\partial_y^k U\|_{0,0} \|\varphi^{n-k} \partial_y^{n-k}
\Delta_\alpha \phi \|_{0,\delta} 
$$
$$
\qquad  + \| \partial_y^k U'' \|_{0,0}\|\varphi^{n-k}\partial_y^{n-k} \phi  \|_{0,\delta}\Big)
$$
$$
\le C_0 \sum_{1\le k\le n} \frac{n!}{k! (n-k)!} \cB_k \cA_{n-k} .
$$  
Thus, applying Proposition \ref{prop-OS2} to \eqref{OS-yndy}, we obtain, using $| \eps \alpha^2 | \le 1$,  
$$ 
\begin{aligned}
&\|  \nabla_\alpha (\varphi^n \partial_y^n \phi) \|_{0,0}  + \| \nabla_\alpha^2 (\varphi^n \partial_y^n \phi)\|_{0,\delta}
+ | \epsilon | (1 + | \Im c |) ^{-1}\|\nabla_\alpha^4  (\varphi^n \partial_y^n \phi)\|_{0,\delta} 
\\&\le \frac{C_0 }{1 + | \Im c |}\|\varphi^n \partial_y^nf\|_{0,\delta} + C_0 \sum_{k=1}^4 \frac{n!}{(n-k)!}\cA_{n-k} 
+ C_0 \sum_{1\le k\le n} \frac{n!}{k! (n-k)!} \cB_k \cA_{n-k} .
\end{aligned}$$
Expanding the left hand side, we thus have 
$$
\cA_n \le  \frac{C_0 }{1 + | \Im c |}\|\varphi^n \partial_y^nf\|_{0,\delta} + C_0 \sum_{k=1}^4 \frac{n!}{(n-k)!}\cA_{n-k} +
C_0 \sum_{1\le k\le n} \frac{n!}{k! (n-k)!} \cB_k \cA_{n-k} 
$$
for all $n\ge 0$. Multiplying the above equation by $z_2^n / n!$ and summing up the result in $n\ge 0$, we obtain 
\begin{equation}\label{bd-Ann} \begin{aligned}
 \sum_{n\ge 0}\cA_n \frac{z_2^n}{n!}
&\le  \frac{C_0}{1+|\Im c|} Gen_{\delta,\alpha}(f) + C_0  \sum_{n\ge 0}\sum_{k=1}^4 \frac{n!}{(n-k)!}\cA_{n-k}  \frac{z_2^n}{n!}
\\
&\quad + 
C_0  \sum_{n\ge 0} \sum_{k=1}^{n} \frac{n!}{k! (n-k)!} \cB_k \cA_{n-k} \frac{z_2^n}{n!}
.\end{aligned}\end{equation}
Since $|z_2| \le 1$, we compute 
$$ \sum_{n\ge 0}\sum_{k=1}^4 \frac{n!}{(n-k)!}\cA_{n-k}  \frac{z_2^n}{n!} \le 4 z_2  \sum_{n\ge 0}\cA_n \frac{z_2^n}{n!},$$
which can be absorbed into the left hand side of \eqref{bd-Ann}, for sufficiently small $z_2$. Similarly, 
$$
\sum_{n\ge 0} \sum_{k=1}^{n} \frac{n!}{k! (n-k)!} \cB_k \cA_{n-k} \frac{z_2^n}{n!} \le 
C_0   \sum_{n\ge 0}\cA_n \frac{z_2^n}{n!} \sum_{n\ge 1}\cB_{n} \frac{z_2^n}{n!}
$$
which is again absorbed into the left of \eqref{bd-Ann}, upon using the assumption that $U$ is analytic, and
that the sum of $\cB_n$ begins on $nÊ\ge 1$. This ends the proof of (\ref{prop-OS3-1}).

Summing (\ref{prop-OS3-1}) gives (\ref{prop-OS3-2}). Multiplying by $| \alpha |$ before summing
(\ref{prop-OS3-1}) we get (\ref{prop-OS3-3}). Now multiplying by $z_1^{n-1} / (n-1)!$ instead of 
$z_1^n \over n!$ we get
$$
\begin{aligned}
 \sum_{n\ge 1}\cA_n \frac{z_2^{n-1}}{(n-1)!}
&\le  \frac{C_0}{1+|\Im c|} \partial_{z_2} Gen_{\delta,\alpha}(f) 
+ C_0  \sum_{n\ge 1}\sum_{k=1}^4 \frac{n!}{(n-k)!}\cA_{n-k}  \frac{z_2^{n-1}}{(n-1)!}
\\
&\quad + 
C_0  \sum_{n\ge 1} \sum_{k=1}^{n} \frac{n!}{k! (n-k)!} \cB_k \cA_{n-k} \frac{z_2^n}{n!}
.\end{aligned}
$$
If $z_2$ is small enough, the right hand side may be absorbed in the left hand side, excepted 
the terms involving $\cA_0$, which are bounded by $Gen_\delta(f)$. This ends the proof of the Proposition.
\end{proof}


\subsection{The $\alpha=0$ case}\label{sec-azero}
In this section, we treat the case when $\alpha=0$. In this case, the resolvent equation of the linearized Navier-Stokes equations simply becomes the resolvent equation for the heat equation  
\begin{equation}
\label{eqs-azero}
(\lambda -\sqrt{\nu }\partial_y^2 ) u_0 = F_0,
\end{equation} 
with $u_0=0$ at $y=0$, where $u_0$ denotes the Fourier mode of the first component of velocity $u$ at $\alpha =0$. We have the following simple proposition. 

\begin{proposition}\label{prop-azero} Let $u_0$ solve \eqref{eqs-azero} and let $\omega_0 = \partial_y u_0$ be the corresponding vorticity. For $\Re \lambda > 3 \Re \lambda_0 / 2$, we have  
	\beq \label{prop-azero1}
	\begin{aligned}
	Gen_{0,0}(u_0) &\le  \frac{C_0}{1+\Re \lambda} Gen_{0,0} (F_0), \\
	Gen_{\delta,0}(\omega_0) &\le  \frac{C_0}{1+\Re \lambda} \Big( Gen_{0,0} (F_0) + Gen_{\delta,0} (\partial_y F_0) \Big) 
	,	\end{aligned}
	\eeq
	for all $z_2\ge 0$.\end{proposition}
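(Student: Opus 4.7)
The plan is to mirror the proof of Proposition \ref{prop-OS3} in the much simpler setting of the one-dimensional heat resolvent, in two steps: first prove the base ($n=0$) estimates on $u_0$ and $\omega_0$ from the explicit Green function, then commute $\varphi^n\partial_y^n$ through the equation and sum against $z_2^n/n!$.

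For the base case, the Green function of $\lambda - \sqrt{\nu}\partial_y^2$ on $\RR_+$ with Dirichlet data is
$$G(x,y) = -\frac{1}{2\sqrt{\lambda\sqrt{\nu}}}\Bigl(e^{-\mu|x-y|}-e^{-\mu(x+y)}\Bigr),\qquad \mu=\sqrt{\lambda/\sqrt{\nu}},\ \Re\mu>0.$$
Since $\Re\lambda \ge 3\Re\lambda_0/2>0$, $\arg\lambda\in(-\pi/2,\pi/2)$, and therefore $\Re\mu \ge |\mu|/\sqrt{2}$ and $|\sqrt{\lambda\sqrt{\nu}}|\cdot\Re\mu\sim|\lambda|$. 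A direct convolution bound on $u_0=G\star F_0$ gives $\|u_0\|_{0,0}\le C(1+\Re\lambda)^{-1}\|F_0\|_{0,0}$. For $\omega_0=\partial_y u_0$, split $G=G_{\mathrm{sym}}+G_{\mathrm{refl}}$ with $G_{\mathrm{sym}}\propto e^{-\mu|x-y|}$ and $G_{\mathrm{refl}}\propto e^{-\mu(x+y)}$. Using $\partial_y e^{-\mu|x-y|}=-\partial_x e^{-\mu|x-y|}$, integrate the $G_{\mathrm{sym}}$ contribution by parts to transfer the derivative onto $F_0$; this produces a boundary term at $x=0$ bounded by $|F_0(0)|\le\|F_0\|_{0,0}$ times a function with fast $e^{-\Re\mu y}$ decay, plus a convolution controlled by $\|\partial_y F_0\|_{0,\delta}/(1+\Re\lambda)$. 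For $G_{\mathrm{refl}}$ one has $\partial_y G_{\mathrm{refl}}=-\mu G_{\mathrm{refl}}$, which decays like $e^{-\Re\mu y}$ with prefactor of size $\mu/\sqrt{\lambda\sqrt{\nu}}=\nu^{-1/2}=\gamma_0\delta^{-1}$. The choice $\gamma_0^{-1}\le\sqrt{\Re\lambda_0/2}$ combined with $\Re\lambda\ge 3\Re\lambda_0/2$ guarantees $\Re\mu\ge c\,\delta^{-1}$, so $e^{-\Re\mu y}$ decays at least as fast as $e^{-y/\delta}$ and is absorbed by the boundary-layer weight $(\delta^{-1}e^{-y/\delta}+1)^{-1}$. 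Collecting, $\|\omega_0\|_{0,\delta}\le C(1+\Re\lambda)^{-1}(\|F_0\|_{0,0}+\|\partial_y F_0\|_{0,\delta})$.

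For the iteration, apply $\varphi^n\partial_y^n$ to the equation and expand the commutator:
$$(\lambda-\sqrt{\nu}\partial_y^2)\bigl(\varphi^n\partial_y^n u_0\bigr)=\varphi^n\partial_y^n F_0 - 2\sqrt{\nu}(\partial_y\varphi^n)\partial_y^{n+1}u_0-\sqrt{\nu}(\partial_y^2\varphi^n)\partial_y^n u_0.$$
The boundary condition $(\varphi^n\partial_y^n u_0)(0)=0$ still holds, since $\varphi(0)=0$ for $n\ge 1$ and $u_0(0)=0$ for $n=0$; so Step 1 applies. Using $|\partial_y\varphi^n|\le n\varphi^{n-1}$ and $|\partial_y^2\varphi^n|\le n(n-1)\varphi^{n-2}+n\varphi^{n-1}$, the commutator terms are controlled by $C\sqrt{\nu}\,[n\,\cA_{n-1}+n(n-1)\cA_{n-2}]$ with $\cA_n:=\|\varphi^n\partial_y^n u_0\|_{0,0}+\|\varphi^n\partial_y^n\omega_0\|_{0,\delta}$ (and analogously for the $\partial_y$ of the source). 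Multiplying by $z_2^n/n!$ and summing, the commutator sums produce a prefactor $C\sqrt{\nu}\,z_2$ times the generator function, which is absorbed into the left-hand side for $\nu$ small and $z_2\le\theta_0$. The remaining source sums to $(1+\Re\lambda)^{-1}Gen_{0,0}(F_0)$ for the $u_0$ estimate, and to $(1+\Re\lambda)^{-1}(Gen_{0,0}(F_0)+Gen_{\delta,0}(\partial_y F_0))$ for the $\omega_0$ estimate.

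The only delicate point is the $\omega_0$ bound in Step 1: the prefactor $\mu/\sqrt{\lambda\sqrt{\nu}}=\nu^{-1/2}$ on the reflection part of $\partial_y G$ blows up as $\nu\to 0$, and one must carefully show that the boundary-layer weight precisely compensates it. This is where the choice of $\gamma_0$ at the very beginning (forcing $\Re\mu\ge c\delta^{-1}$) is essential; once this matching of exponential rates is in hand, everything else is routine absorption in $z_2$ and $\sqrt{\nu}$.
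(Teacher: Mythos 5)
Your overall route is the same as the paper's: the paper also represents $u_0$ through the explicit Green function of $\lambda-\sqrt{\nu}\,\partial_y^2$ with Dirichlet condition, proves the $u_0$ bound by the identical convolution estimate, and handles all derivative estimates with the remark that they are obtained ``in the same way as for the Orr--Sommerfeld equations'', i.e.\ by commuting $\varphi^n\partial_y^n$ through the equation and summing against $z_2^n/n!$. So there is no difference of strategy; the issues are in the details you supply. First, in your Step 1 the bookkeeping does not give what you claim. The reflected kernel carries the prefactor $|\mu|/(2|\sqrt{\lambda\sqrt\nu}|)=\nu^{-1/2}$, which equals $\gamma_0^2\delta^{-2}$, not $\gamma_0\delta^{-1}$; after the $x$-integration (a factor $1/\Re\mu\sim\nu^{1/4}|\lambda|^{-1/2}$) and after the boundary-layer weight, which contributes at most a factor $\approx\delta$ (the supremum of $e^{-\Re\mu y}(\delta^{-1}e^{-y/\delta}+1)^{-1}$ is $\approx\delta$, attained near $y=0$ no matter how large $\Re\mu$ is), what survives is of size $\gamma_0|\lambda|^{-1/2}\|F_0\|_{0,0}$; the boundary term of your integration by parts, whose prefactor $1/(2|\sqrt{\lambda\sqrt\nu}|)=\nu^{-1/4}|\lambda|^{-1/2}$ you left implicit, has the same size. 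Hence your computation yields the gain $(1+\Re\lambda)^{-1/2}$, not $(1+\Re\lambda)^{-1}$, on the $Gen_{0,0}(F_0)$ contribution to the vorticity bound (only the $Gen_{\delta,0}(\partial_yF_0)$ part genuinely comes with $(1+\Re\lambda)^{-1}$). This is not merely a sloppy constant: for $F_0\equiv 1$ and $\lambda$ real and large one has exactly $\omega_0=(\mu/\lambda)e^{-\mu y}$ and $\|\omega_0\|_{0,\delta}\sim\gamma_0|\lambda|^{-1/2}$, so no argument can recover the full $(1+\Re\lambda)^{-1}$ on that term; your ``collecting'' sentence overclaims (the paper's two-line proof glosses over the very same point), and you should either prove the correct rate $(1+\Re\lambda)^{-1/2}$ for that contribution or explain why the weaker rate suffices where the proposition is used.

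Second, the iteration in your Step 2 does not close with your choice of $\cA_n$. The commutator source contains $2\sqrt\nu\,\partial_y(\varphi^n)\,\partial_y^{n+1}u_0 = 2\sqrt\nu\, n\,\varphi'\varphi^{n-1}\partial_y^{n}\omega_0$, i.e.\ $n$ derivatives of $\omega_0$ with weight $\varphi^{n-1}$, and once you take the $y$-derivative of the source, as your Step 1 estimate requires, you need $n+1$ derivatives of $\omega_0$; but $\cA_{n-1}$ as you defined it only controls $\partial_y^{n-1}\omega_0$, and only in the $\|\cdot\|_{0,\delta}$ norm rather than the sup norm needed for the $u_0$ part. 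So the bound $C\sqrt\nu\,[\,n\,\cA_{n-1}+n(n-1)\cA_{n-2}\,]$ is not available as written. The remedy is exactly the device the paper uses in Propositions \ref{prop-elliptic} and \ref{prop-OS3}: enlarge $\cA_n$ so that it also contains the higher-derivative quantities that the resolvent controls (here, terms such as $\sqrt\nu\,\|\varphi^n\partial_y^{n+1}\omega_0\|_{0,\delta}$ with suitable prefactors, obtainable from $\sqrt\nu\,\partial_y\omega_0=\lambda u_0-F_0$ and its commuted versions), after which the commutator terms are by definition $\cA_{n-1}$, $\cA_{n-2}$ and your absorption for small $z_2$ goes through; with this bookkeeping no smallness assumption on $\nu$ is needed, just as none is needed in the paper's Orr--Sommerfeld argument.
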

\begin{proof} The solution $u_0$ to \eqref{eqs-azero} with the zero boundary condition satisfies 
	$$ u_0(y) = \int_0^\infty G_0(y,z) F_0(z) \; dz$$
	where $G_0(y,z)$ denotes the Green function for $(\lambda - \sqrt \nu \partial_y^2)$ with the Dirichlet boundary condition. In particular, we have 
	$$ |G_0(y,z)| \le \nu^{-1/4}|\lambda|^{-1/2} e^{- \nu^{-1/4} \Re \sqrt \lambda |y-z|} .$$  
In particular, for $\Re \lambda > 3\Re \lambda_0/2$, we estimate 
	$$\begin{aligned}
	 | u_0(y)| 
	 &\le \int_0^\infty \nu^{-1/4}|\lambda|^{-1/2} e^{- \nu^{-1/4} \Re \sqrt \lambda |y-z|} |F_0(z)| \; dz
	 \\
	 &\le C_0 (1+\Re \lambda)^{-1} \sup_y |F_0(y)| .
	\end{aligned}
	$$
The estimates for derivatives are obtained in the same way as done in the previous section for the Orr-Sommerfeld equations. The proposition follows.   
\end{proof}

\section{Construction of the instability}



\subsection{Iterative construction}


Let us now describe the iterative construction of $u^n$ and $\omega^n$ 
and of the infinite series which defines  the solution \eqref{utrue}. 
We start with the most unstable eigenmode $(\psi_0,\alpha_0,c_0)$ to the Orr-Sommerfeld problem; namely, we start with a solution of 
$$
Orr_{\alpha_0,c_0} (\psi_0) = 0,
$$
with the zero boundary conditions on $y=0$, such that $\alpha_0 \Im c_0$ is maximum. In fact we just need to start
from a mode such that $\alpha_0 \Im c_0$ is strictly larger than half of this maximum. 
Up to a change of sign we may assume
that $\alpha_0 > 0$. Up to a rescaling we may also assume that $\alpha_0 = 1$.

This mode corresponds to a complex solution $\nabla^\perp(e^{i \alpha_0 (x - c_0t)} \psi_0(y))$ of the linearized Navier Stokes equations.
We have to take the real part of this solution in order to deal with real valued solutions. Note that
$(\bar \psi_0,-\alpha_0,\bar c_0)$ is also an eigenmode. We therefore sum up the two unstable eigenmodes
corresponding to $\alpha_0$ and $- \alpha_0$ and define 
$$
\psi^1(t,x,y) =  e^{i \alpha_0 ( x - \Re c_0  t) + \alpha_0 \Im c_0 t} \psi_0(y) 
+  e^{ - i \alpha_0 ( x - \Re c_0  t) + \alpha_0 \Im c_0 t} \bar \psi_0(y)  .
$$
Let $\psi^1_\alpha$ be the Fourier transform of $\psi^1$ in $x$ variable. 
Then all the $\psi^1_\alpha$ vanish, except two of them, namely $\alpha = \pm \alpha_0 = \pm 1$. 
We then iteratively solve the resolvent equation of the linearized Navier-Stokes problem \eqref{eqs-un} for $u^n = \nabla^\perp \psi^n$. In term of vorticity $\omega^n = \Delta \psi^n$, the problem reads
\begin{equation}\label{eqs-wn}
\begin{aligned}
\partial_t \omega^n + U \partial_x \omega^n - U'' \partial_x \psi^n -\sqrt \nu \Delta \omega^n &= - \sum_{1 \le j \le n-1} (u^j \cdot \nabla ) \omega^{n-j},
\end{aligned} \end{equation}
together with the zero boundary condition on $u^n = \nabla^\perp \psi^n$. Precisely, we search for $\psi^n$ 
under the form
\beq \label{firstpsin}
\psi^n = \sum_{|\alpha| \le n} \psi_\alpha^n e^{i \alpha ( x - \Re c_0 t)} e^{n \Im c_0 t},
\eeq
where the sum runs on all the $\alpha$ which are multiples of $\alpha_0$. This yields, for $n\ge 2$,
\begin{equation}\label{OS-un}
Orr_{\alpha,c} (\psi^n_\alpha) = {1 \over i \alpha} \sum_{\alpha'} 
\sum_{1 \le j \le n-1} (u^j_{\alpha - \alpha'} \cdot \nabla_{\alpha'})  \omega_{\alpha'}^{n-j},
\end{equation}
in which $u^j_\alpha = \nabla_\alpha^\perp \psi^j_\alpha$ and $\omega^j_\alpha = \Delta_\alpha \psi^j_\alpha$, 
together with the zero boundary conditions on $\psi^n$ and $\partial_y \psi^n$, and in which
$$
c = \Re c_0  + i n {\alpha_0 \over \alpha} \Im c_0 .
$$
Note that a $\alpha^{-1}$ factor appears in front
of the source term, since Orr Sommerfeld is obtained by taking the vorticity of Navier Stokes equations and dividing by
$\alpha$.
Note also that all but a finite number of $\psi^n_\alpha$ vanish.
Again the sum runs on all the $\alpha$ which are multiple of $\alpha_0$. Note also that
$| \Im c | \ge | \Im c_0|$ and is thus bounded away from $0$.

As Proposition \ref{prop-OS3} only holds for $| \alpha^3 \epsilon |\le 1$ or equivalently
$|\alpha| \le \nu^{-1/4}$, we will only retain the $| \alpha | \le \nu^{-1/4}$ in the construction
of $\psi^n$ and restrict (\ref{firstpsin}) to 
$$
\psi^n = \sum_{| \alpha | \le \nu^{-1/4}} \psi_\alpha^n e^{i \alpha (x - \Re c_0 t)} e^{n \Im c_0 t} .
$$
This leads to the introduction of the force
$$
f^n = \sum_{| \alpha | > \nu^{-1/4}} \sum_{\alpha'} \sum_{1 \le j \le n-1}
(u^j_{\alpha - \alpha'} . \nabla_{\alpha'} ) \omega_{\alpha'}^{n-j} 
$$
that will be estimated below.


\subsection{Bounds on $\psi^n$}


 We prove the following. 

\begin{proposition}
Introduce the iterative norm 
\begin{equation}\label{def-Gn}
G^n =  Gen_0 (u^n) + Gen_0 (v^n) + \partial_{z_1} Gen_0(u^n) 
\eeq
$$
\qquad \qquad + Gen_\delta(\omega^n) + \partial_{z_1} Gen_\delta(\omega^n) + \partial_{z_2} Gen_\delta(\omega^n),
$$
for $n \ge 1$. Then, $G^n(z_1,z_2)$ are well-defined for sufficiently small $z_1,z_2$, and in addition, 
there exists some universal constant $C_0$ so that 
\beq \label{boundGn}
G^n \le {C \over n} \sum_{1 \le j \le n-1} \Big( G^j \partial_{z_1} G^{n-j} + G^j \partial_{z_2} G^{n-j} \Big) .
\eeq
\end{proposition}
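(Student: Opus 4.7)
The plan is to prove the recursion by induction on $n$, combining the linear stability of the Orr--Sommerfeld solution operator (Proposition \ref{prop-OS3}) with the nonlinear transport bound on the generator function (the second Proposition \ref{prop-Gendy}). The base case $n=1$ is the eigenmode $\psi^1$, which is a finite Fourier sum of analytic functions with exponential decay in $y$ coming from Theorem \ref{theo-GreenOS}, so $G^1(z_1,z_2)$ is finite on a small disk.

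\textbf{Step 1 (linear).} I would apply Proposition \ref{prop-OS3} mode by mode to \eqref{OS-un} for $|\alpha|\le \nu^{-1/4}$. The crucial computation is the prefactor: with $c = \Re c_0 + i n \alpha_0 \Im c_0/\alpha$, one has $|\Im c| = n\alpha_0 |\Im c_0|/|\alpha|$, and combining with the $1/(i\alpha)$ factor in front of the source in \eqref{OS-un} gives
$$\frac{1}{|\alpha|(1+|\Im c|)} \le \frac{1}{n\alpha_0 |\Im c_0|} \le \frac{C}{n}.$$
The hypotheses of Proposition \ref{prop-OS3} are met along the iteration: $|\alpha \Im c| = n\alpha_0 |\Im c_0| > 3\Re\lambda_0/2$ for all $n\ge 1$ by the choice of $c_0$, while $|\epsilon\alpha^3 \log\nu|\le 1$ is enforced by the truncation $|\alpha|\le \nu^{-1/4}$ (the discarded high modes produce the remainder $f^n$, to be handled separately). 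Summing the three inequalities of Proposition \ref{prop-OS3} in $\alpha$ yields
$$G^n \;\le\; \frac{C}{n}\,\bigl[\,Gen_\delta(F^n) + \partial_{z_1} Gen_\delta(F^n) + \partial_{z_2} Gen_\delta(F^n)\,\bigr] \;=\; \frac{C}{n}\,\mathcal{A}(F^n),$$
where $F^n = -\sum_{1\le j \le n-1}(u^j\cdot \nabla)\omega^{n-j}$ is the vorticity-side source.

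\textbf{Step 2 (nonlinear).} I would then apply the second Proposition \ref{prop-Gendy} to each summand $(u^j\cdot\nabla)\omega^{n-j}$, with $u=u^j$, $v=v^j$, $g=\omega^{n-j}$. The quantity $\mathcal{B}$ there bundles precisely $Gen_0(u^j)$, $Gen_0(v^j)$, $\partial_{z_1} Gen_0(u^j)$, all dominated by $G^j$, and $\mathcal{A}(\omega^{n-j})$, dominated by $G^{n-j}$. A key structural point visible in the proof of that proposition is that every term in the upper bound is a cross-product of a velocity factor and a $g$-factor, never velocity-times-velocity or $g$-times-$g$; this ensures the bound
$$\mathcal{A}\bigl((u^j\cdot\nabla)\omega^{n-j}\bigr)\;\le\; C\,G^j\,\partial_{z_1}G^{n-j} + C\,G^j\,\partial_{z_2}G^{n-j}.$$
Summing over $j$ and combining with Step 1 produces \eqref{boundGn}.

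\textbf{Well-definedness and main obstacle.} Inductively, $\psi^n$ is a finite Fourier sum in $x$ (only $|\alpha|\le n\alpha_0$ contribute), each coefficient being built from the Orr--Sommerfeld Green function of Theorem \ref{theo-GreenOS} applied to analytic, exponentially decaying sources; this guarantees the power series in $z_2$ defining each $Gen_{\delta,\alpha}$ converges on a uniform strip, so $G^n$ is finite for small $(z_1,z_2)$. The delicate part is the algebraic closure of the fixed-point cycle: the six quantities defining $G^n$ in \eqref{def-Gn} are chosen so as to be exactly the outputs of Proposition \ref{prop-OS3} and exactly the inputs demanded by Proposition \ref{prop-Gendy} when $g$ is the vorticity. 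If any one of these were missing, the recursion would either fail to close or lose the gain encoded by the $1/n$ factor, which is the quantitative ingredient that ultimately drives the summability of the series \eqref{utrue}.
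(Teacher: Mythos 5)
Your proposal is correct and follows essentially the same route as the paper's proof: apply Proposition \ref{prop-OS3} to \eqref{OS-un}, with $|\Im c| = n\alpha_0 |\Im c_0|/|\alpha|$ combining with the $1/(i\alpha)$ prefactor to yield the $C/n$ gain, and then bound the quadratic source $(u^j\cdot\nabla)\omega^{n-j}$ via Proposition \ref{prop-Gendy}. The only cosmetic difference is that the paper recovers $Gen_0(u^n)+Gen_0(v^n)+\partial_{z_1}Gen_0(u^n)$ from $Gen_\delta(\omega^n)$ through Proposition \ref{prop-elliptic}, whereas you read these velocity bounds directly off the $Gen_{0,\alpha}(\nabla_\alpha\phi)$ control in Proposition \ref{prop-OS3}.
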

Note that the derivatives appearing in (\ref{def-Gn}) are non negative.

\begin{proof} Applying Proposition \ref{prop-OS3} to the Orr-Sommerfeld equation \eqref{OS-un}, 
using $|\alpha | \le \nu^{-1/4}$,  and summing over $\alpha$, we get 
\begin{equation}\label{iter-un}
{\cal A} (\omega^n) \le {C_0 \over n} {\cal A} \Bigl( \sum_{1 \le j \le n-1}  (u^j \cdot \nabla) \omega^{n-j}  \Bigr) .
\end{equation}
Moreover, using Proposition \ref{prop-elliptic}
$$
Gen_0(u^n) + Gen_0(v^n) \le C Gen_\delta(\omega^n)
$$
and
$$
\partial_{z_1} Gen_0(u^n) \le C \partial_{z_1} Gen_\delta(\omega^n).
$$
Proposition \ref{prop-Gendy} then gives the desired bound.
\end{proof}


\subsection{Bounds on the generator function}


\begin{theo} For $n\ge 1$, let $G^n(z_1,z_2)$ be defined as in \eqref{def-Gn}. Then, the series
$$
G(\tau,z_1,z_2) = \sum_{n=1}^{+\infty} G^n(z_1,z_2) \tau^{n-1}  
$$
converges, for sufficiently small $\tau$, $z_1$, and $z_2$. 
\end{theo}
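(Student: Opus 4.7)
The plan is to convert \eqref{boundGn} into a Hopf-type inequality for the formal series $G(\tau,z_1,z_2)=\sum_{n\ge 1}G^n\tau^{n-1}$, and then to construct an explicit analytic majorant in the spirit of abstract Cauchy--Kowalevski. All the generator functions here are, by construction, power series in $(z_1,z_2)$ with non-negative Taylor coefficients (after expanding $e^{z_1|\alpha|}=\sum_k z_1^k|\alpha|^k/k!$), so inequalities between them will be read coefficient-wise (denoted $\ll$), a relation preserved by products and by $\partial_{z_1},\partial_{z_2}$. Multiplying \eqref{boundGn} by $n\tau^{n-1}$, summing over $n\ge 2$, and using $\sum_{n\ge 1}nG^n\tau^{n-1}=\partial_\tau(\tau G)$ together with the convolution identity $\tau G\,(\partial_{z_1}+\partial_{z_2})G=\sum_{n\ge 2}\tau^{n-1}\sum_{j=1}^{n-1}G^j(\partial_{z_1}+\partial_{z_2})G^{n-j}$, one obtains the key Hopf inequality
\[
\partial_\tau(\tau G)\ \le\ G^1+C_0\,\tau\,G\,(\partial_{z_1}+\partial_{z_2})G.
\]

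The first iterate $G^1$ is built from the Orr--Sommerfeld eigenmode $\psi_0$, which is analytic uniformly in the viscous-sublayer weight $\|\cdot\|_{\ell,\delta}$ thanks to \eqref{structurepsi2}, so Cauchy estimates yield $G^1\ll A/(\sigma-z_1-z_2)$ for suitable $A,\sigma>0$ (on a small polydisc containing the origin). I then take the majorant
\[
\bar G^n(z_1,z_2)=\frac{A\,\rho^{n-1}}{n^2\,(\sigma-z_1-z_2)^n},\qquad \bar G(\tau,z_1,z_2)=\sum_{n\ge 1}\bar G^n\tau^{n-1}.
\]
The crucial combinatorial input, obtained by partial fractions, is $\sum_{j=1}^{n-1}1/(j^2(n-j))\le\tilde C/n$. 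Computing $\partial_{z_i}\bar G^{n-j}=A\rho^{n-j-1}/\bigl((n-j)(\sigma-z_1-z_2)^{n-j+1}\bigr)$, the majorant inequality
\[
n\,\bar G^n\ \ge\ 2C_0\sum_{j=1}^{n-1}\bar G^j(\partial_{z_1}+\partial_{z_2})\bar G^{n-j}
\]
reduces to $\rho(\sigma-z_1-z_2)\ge 4C_0\tilde CA$, which holds throughout $\{z_1+z_2<\sigma/2\}$ once $\rho$ is chosen large enough.

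An induction on $n$ then yields $G^n\ll\bar G^n$ coefficient-wise: the base case is the choice of $A,\sigma$ above, and for the induction step one plugs $G^j\ll\bar G^j$ ($j<n$) into \eqref{boundGn} and applies the previous majorant inequality, using that $\ll$ is preserved under products and under $\partial_{z_1},\partial_{z_2}$ between series with non-negative coefficients. Finally, the explicit series $\bar G=\frac{A}{\sigma-z_1-z_2}\sum_{n\ge 1}\frac{1}{n^2}\bigl(\frac{\rho\tau}{\sigma-z_1-z_2}\bigr)^{n-1}$ converges absolutely on the polydisc $\{z_1+z_2<\sigma/2,\ \rho|\tau|<\sigma-z_1-z_2\}$, and by coefficient-wise domination so does $G$.

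The heart of the argument is choosing the correct shape of the majorant. The more naive ansatz $\bar G^n\sim \rho^{n-1}/(\sigma-z_1-z_2)^n$, i.e.\ $\bar G=A/(\sigma-z_1-z_2-\rho\tau)$, fails: the convolution then produces a factor $n$ that the $1/n$ gain in \eqref{boundGn} cannot absorb. The $n^{-2}$ weight in $\bar G^n$ is dictated precisely by the asymptotic $C/n$ of the inner sum $\sum_{j=1}^{n-1}1/(j^2(n-j))$, and this matching is the essential analytic input; once it is in place, everything reduces to routine manipulation of formal power series.
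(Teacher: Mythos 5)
Your reduction to a Hopf-type inequality and your combinatorial lemma $\sum_{j=1}^{n-1}\frac{1}{j^2(n-j)}\le \tilde C/n$ are fine, but the core of your argument rests on a step that is not available: you declare that inequalities between generator functions ``will be read coefficient-wise.'' Having non-negative Taylor coefficients does not upgrade a pointwise inequality to a coefficient-wise one (e.g.\ $z\le 1$ on $[0,1]$ but $z\not\ll 1$), and the paper's bound \eqref{boundGn} is genuinely only a pointwise inequality for small non-negative $z_1,z_2$: its proof (via Propositions \ref{prop-OS3}, \ref{prop-elliptic}, \ref{prop-Gendy}) proceeds by sup-norm estimates in which lower-order terms $\cA_{n-k}$ are absorbed into the left-hand side \emph{after} summation in $n$, which destroys any term-by-term domination. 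This matters because your induction step needs $\partial_{z_i}G^{n-j}\ll\partial_{z_i}\bar G^{n-j}$; coefficient-wise domination would give that, but the pointwise bound $G^{n-j}\le\bar G^{n-j}$ does not, and you cannot differentiate the inequality \eqref{boundGn}. Note also that the hierarchy does not close at any finite number of derivatives: $G^n$ in \eqref{def-Gn} already contains first derivatives of the generator functions, while the right-hand side of \eqref{boundGn} contains $\partial_{z_i}G^{n-j}$, i.e.\ second derivatives, so strengthening the induction to ``finitely many derivatives'' does not help either.

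This is precisely the difficulty the paper's proof is designed to sidestep: it sums first, obtaining for the partial sums $G_N=\sum_{n\le N}G^n\tau^{n-1}$ the differential inequality $\partial_\tau G_N\le C\,G_N(\partial_{z_1}+\partial_{z_2})G_N$, then restricts to the diagonal with a time-shrinking analyticity radius $z_i=\theta(\tau)z$, $\theta(\tau)=1-6CM_0\rho^{-1}\tau$, and runs characteristics. There the dangerous derivative term is never estimated: it appears multiplied by a transport speed that the choice of $\theta$ makes favorable, and only its sign ($\partial_zF_N\ge 0$) is used, yielding $F_N\le M_0$ uniformly in $N$ on a fixed time interval, hence convergence by monotonicity. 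To salvage your majorant scheme you would need either a coefficient-wise (or all-derivatives) version of \eqref{boundGn}, which would require redoing the Orr--Sommerfeld estimates at that refined level, or a comparison argument for the Hopf inequality itself, which essentially reproduces the paper's characteristics argument.
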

\begin{proof}
For $N\ge 1$, let us introduce the partial sum 
$$
G_N(\tau,z_1,z_2)  := \sum_{n=1}^N G^n(z_1,z_2) \tau^{n-1} ,
$$
for $\tau,z_1,z_2\ge 0$. 
Note that $G_N$ is a polynomial in $\tau$, and thus well-defined for all times $\tau\ge 0$. 
We also note that all the coefficients $G^n(z_1,z_2)$ are positive. 
In particular, $G_N(\tau, z_1,z_2)$ is positive, and so are all its time derivatives (when $z_1 > 0$ and $z_2 > 0$). 
Moreover, $G_N(\tau,z_1,z_2)$, and all its derivatives, are increasing in $N$. We also observe that, at $\tau =0$,
$$
G_N(0,z_1,z_2) = G^1(z_1,z_2),
$$
for all $N\ge 1$, and hence, 
$$
G(0,z_1,z_2) = \lim_{N\to \infty} G_N(0,z_1,z_2) = G^1(z_1,z_2).
$$ 
Next, multiplying (\ref{boundGn}) by $\tau^{n-2}$ and summing up the result, we obtain 
the following partial differential inequality
$$
\partial_\tau G_N \le C G_N \partial_{z_1} G_N + C G_N \partial_{z_2} G_N ,
$$
for all $N\ge 1$. Therefore the generator function satisfies an Hopf-type equation, or more precisely an Hopf inequality.

As $G_N$ is increasing in $z_1$ and $z_2$, we focus on the diagonal $z_1 = z_2$, and introduce
$$
F_N(\tau,z) = G_N(\tau,\theta(\tau) z, \theta(\tau) z) 
$$
for $\tau, z\ge 0$, where $\theta(\cdot)$ will be chosen later, with $\theta(0)=1$.
It follows that $F_N$ satisfies
\begin{equation}\label{ode-FN}
\partial_\tau F_N \le (2 C F_N  + \theta'(\tau) z) \partial_z F_N .
\end{equation}
Note that $F_N$ is increasing in $N$. At $\tau= 0$, $F_N(0,z) = G_N(0,z,z) = G^1(z,z)$, which is independent on $N$.
Let  $\rho > 0$ be small enough such that
$$
M_0 = \sup_{0 \le z \le \rho} G^1(z,z)
$$
is well defined. 
We now define $\theta(\tau)$ in such a way that 
$$
4 C M_0 + \theta'(\tau) \rho < 0,
$$
with $\theta(0)=1$. For instance, we take $$
\theta(\tau) = 1 - 6 C M_0 \rho^{-1} \tau .
$$
We will work on a time interval where $\theta(\tau) \ge 1/2$, namely on $[0,T_0]$ where $T_0 = \rho / 12 C M_0$.
Let $T_N$ be the largest time $\le T_0$ such that $F_N(\tau ,z)  \le 2 M_0$, for $0 \le \tau \le T_N$ and $0 \le z \le \rho$.
Note that $T_N$ exists and is strictly positive, since $F_N$ is well defined for all the positive times, and continuous in time. It remains to prove that $\inf_{N\ge 1}T_N$ is positive. 

Let us define the characteristics curves $X_N(\tau ,z)$ by solving
$$
\partial_\tau X_N(\tau ,z) = - 2 C F_N(\tau ,X_N(\tau ,z)) - \theta'(\tau) X_N(\tau ,z),
$$
together with $
X_N(0,z) = z
$. Observe that the characteristics are outgoing at $z = 0$ and $z = \rho$. Therefore the characteristics completely fill 
$[0,T_N] \times [0,\rho]$. Let us now introduce 
$$
\widetilde F_N(\tau ,z) = F_N(\tau ,X_N(\tau ,z)) .
$$
It follows from \eqref{ode-FN} that 
$$
\partial_\tau \widetilde F_N(\tau ,z) = \partial_\tau F_N + \partial_\tau X_N \partial_z F_N \le 0.
$$
As a consequence, 
$$
\sup_{0 \le z \le \rho} F_N(\tau ,z) \le \sup_{0 \le z \le \rho} F_N(0,z)  = \sup_{0 \le z \le \rho} G^1(z,z) = M_0.
$$
Therefore $T_N \ge T_0$, for all $N\ge 1$, and $F_N$ is bounded uniformly on $[0,T_0] \times [0,\rho]$. We can therefore take the limit $N \to + \infty$. This leads to the 
convergence of $G_N(\tau,z_1,z_1)$ as $N \to \infty$ for $0 \le \tau \le T_0$ and for $0 \le z_1 \le \rho$. Since $G_N(\tau,z_1,z_2)$ is increasing in $z_1,z_2$, the convergence of $G_N(\tau,z_1,z_2)$ as $N\to \infty$ follows.
\end{proof}


\subsection{End of proof}


It remains to bound the force term $f^n$.
For this we note that the cut off occurs for $| \alpha | \ge \nu^{-1/4}$ where the corresponding modes are exponentially
small. The force term is therefore exponentially small itself, and therefore arbitrary small in any Sobolev space.

Now $\sum_n \omega^n \tau^n$ converges for $\tau$ small enough. Let 
$$
\tau = \nu^N e^{\alpha_0 \Im c_0 t} .
$$
Then as long as $\tau$ remains small, namely as long as $t$ remains small than 
$C \log \nu^{-1}$ for some constant $C$, this series converges and defines a solution of the full incompressible
Navier Stokes equations. Note that the series defining $u$ and $v$ also converges in the same way.
This prove Theorem \ref{theoinstable}. Then Theorem \ref{maintheo} follows by a simple rescaling
$(T,X,Z) = \sqrt{\nu}^{-1} (t,x,z)$.


\section{Time dependent shear flow}


We now turn to the case where the shear flow $U_s$ depends on time. Let $\Omega_s$ be the corresponding vorticity.
Note that $U_s$ is a solution of 
$$
\partial_t U_s - \nu \partial_{yy} U_s = 0
$$ 
and hence depends on $t$ through
$\sqrt{\nu} t$. We put this dependency in the notation $U_s(\sqrt{\nu} t,y)$.
The perturbation satisfies
\beq \label{NSnon1}
\begin{aligned}
\partial_t \omega &+ (U_s(0) . \nabla) \omega + (u . \nabla) \Omega_s(0) - \sqrt \nu \Delta \omega 
\\&= - (u . \nabla) \omega
- Q_1(\sqrt{\nu} t) \omega - Q_2(\sqrt{\nu} t) u
\end{aligned}\eeq
where
$$
Q_1(\sqrt{\nu t}) \omega = \Bigl(U_s(\sqrt{\nu t}) - U_s(0) \Bigr) \partial_x \omega
$$
and
$$
Q_2(\sqrt{\nu t}) u = (u . \nabla) \Bigl(\Omega_s(\sqrt{\nu} t) - \Omega_s(0) \Bigr) .
$$
Note that
$$
U_s(\sqrt{\nu t},y) = U_s(0) + \sum_{k = 1}^M (\sqrt{\nu} t)^k U_s^k(y) + O((\sqrt{\nu} t)^{M+1}).
$$
As we are interested in times of order $\log \nu$, and keeping in mind that $\partial_x \omega$ is always bounded
by $\nu^{-1/4}$, we can put the $O()$ in the forcing term.  

We will fulfill a perturbative analysis and look for solutions of (\ref{NSnon1}) of the form
\beq \label{perturb1}
\psi(t,x,y) = \sum_{n \ge 1} \sum_{p \ge 0} \sum_{q \ge 0} \sum_{\alpha \in \zit} e^{n \Im c_0 t} t^p \sqrt{\nu}^q 
e^{i \alpha (x - \Re c_0 t)} \psi_{\alpha}^{n,p,q}(y) .
\eeq
In fact it is sufficient to bound $q$ by some large integer $M$, since we allow a small forcing term.

Putting (\ref{perturb1}) in (\ref{NSnon1}) we get
\beq \label{perturb2}
i\alpha~ Orr_{\alpha,c} (\psi_\alpha^{n,p,q}) + (p+1)  \psi_{\alpha}^{n,p+1,q} = Q^{n,p,q} + L^{n,p,q}
\eeq
where
$$
Q^{n,p,q} = 
\sum_{\alpha'} \sum_{1 \le j \le n-1}Ê\sum_{0 \le k \le p} \sum_{0 \le l \le q}
(u_{\alpha - \alpha'}^{j,k,l} . \nabla_{\alpha'} ) \omega_{\alpha'}^{n-j,p-k,q-l}
$$
and 
$$
L^{n,p,q} = i\alpha
 \sum_{k=1}^M U_s^k \omega_{\alpha}^{n,p-k,q-k}
- i \alpha  \sum_{k=1}^M \psi_\alpha^{n,p-k,q-k} \partial_y \Omega_s^k,
$$
with the convention that a quantity vanishes if one of its indices is negative.
Note that $Q^{n,p,q}$ only involves
$\psi_{\alpha'}^{n',p',q'}$ with $n' < n$. Next, $L^{n,p,q}$ involves $\psi_{\alpha'}^{n',p',q'}$ with $n' \le n$ and $p' < p$ and $q' < q$.

We will solve this equation by recurrence on the power of $\sqrt{\nu}$, namely on $q$.
We begin with the leading order $q = 0$. All the $\psi_\alpha^{n,p,0}$ vanish, except when $p = 0$.
System (\ref{perturb1}) then reduces to (\ref{OS-un}) 
$$
\psi_\alpha^{n,0,0} = \psi_\alpha^n 
$$
which are constructed in the previous section, up to any arbitrarily large $n$. 

We then turn to $q = 1$.
The first terms $\psi_\alpha^{n,0,0}$ create an "error term" $L$ involving $\sqrt{\nu}^k t^k$ for $1 \le k \le M$. 
Let us first focus on the case $k = 1$.
For $k = 1$, the corresponding $L^{n,1,1}$ term is
$$
i\alpha \Bigl( U_s^1 \omega_\alpha^{n,0,0} -\psi_\alpha^{n,0,0} \partial_y \Omega_s^1  \Bigr) . 
$$
For $n = 1$ and $\alpha = \pm 1$ we note that $Orr_{\alpha,c}$ is not invertible.
This operator may also be non invertible for other values of $\alpha$ (in finite number).
To simplify the discussion we assume that this does not occur (the general case is similar). 

If $(n,\alpha) \ne (1, \pm 1)$, we take $\psi_\alpha^{n,p,1} = 0$ for $p > 1$. This leads to
\beq \label{perturb10}
i\alpha Orr_{\alpha,c}(\psi_\alpha^{n,1,1}) = 
\sum_{\alpha'} \sum_{1 \le j \le n-1} \sum_{k=0,1}
(u^{j,k,k}_{\alpha - \alpha'} . \nabla_{\alpha'}) \omega_{\alpha'}^{n-j,1-k,1-k} + L^{n,1,1}
\eeq
which is a linearized version of (\ref{OS-un}).

For $(n,\alpha) = (1,\pm 1)$, we note that $Orr_{\alpha,c}$ is not invertible. 
Let $A$ be the right hand side of (\ref{perturb10}). Thanks to Theorem \ref{theopseudo} 
we define 
$$
\psi_{\pm 1}^{1,1,1} = Orr^{-1} (A),
$$ 
and we use $\psi_{\pm 1}^{1,2,1}$ to handle the remainder
$$
2 \omega_{\pm 1}^{1,2,1} =  l^{\nu} (A) \phi_{1,c_0}.
$$
Now to bound $\psi_\alpha^{n,1,1}$ we introduce the corresponding generator function $G^1$ and proceed as
in the previous section. This leads to the following inequality
$$
\partial_t G^1 \le G^0 \partial_{z_1} G^1 + G^0 \partial_{z_2} G^1 
+ G^1 \partial_{z_1} G^0 + G^1 \partial_{z_2} G^0 + C \partial_x G^0 + C G^0 .
$$
Using the same arguments as in the previous section, we obtain bound uniform bounds on $G^1$.
The recurrence can be continued using similar arguments.


\bibliographystyle{abbrv}

\def\cprime{$'$} \def\cprime{$'$}

\end{document}